\documentclass[a4paper,11pt,reqno]{amsart}
\usepackage{amsxtra}
\usepackage{color}
\usepackage{amsopn}
\usepackage{amsmath,amsthm,amssymb,arydshln}
\usepackage{mathrsfs,mathtools}
\usepackage{stmaryrd}
\usepackage{graphicx}
\usepackage{bm}
\usepackage{enumerate}
\usepackage{xcolor}
\usepackage{pifont}
\usepackage{adjustbox}
\usepackage{tikz}
\usepackage{color}
\usepackage{hyperref}

\newtheorem{theorem}{Theorem}[section]
\newtheorem{corollary}[theorem]{Corollary}
\newtheorem{proposition}[theorem]{Proposition}
\newtheorem{lemma}[theorem]{Lemma}
\newtheorem*{theorem*}{Theorem}

\theoremstyle{definition}

\newtheorem{example}[theorem]{Example}

\newtheorem{remark}[theorem]{Remark}

\makeatletter
\@namedef{subjclassname@2020}{%
  \textup{2020} Mathematics Subject Classification}
\makeatother


\newcommand{\bC}{\mathbb{C}}

\newcommand{\bR}{\mathbb{R}}

\newcommand{\beq}{\begin{equation}}
\newcommand{\eeq}{\end{equation}}

\newcommand{\e}{\mathrm{e}}

\newcommand{\f}{\varphi}

\renewcommand{\l}{\lambda}

\newcommand{\s}{\sigma}


\newcommand{\psip}{\psi_{\sst+}}
\newcommand{\psim}{\psi_{\sst-}}


\newcommand{\SU}{{\mathrm{SU}}}
\newcommand{\On}{{\mathrm O}}
\newcommand{\SO}{{\mathrm {SO}}}

\newcommand{\GL}{{\mathrm {GL}}}

\newcommand{\G}{{\mathrm G}}


\newcommand{\W}{\wedge}

\renewcommand{\d}{\mathrm{d}}
\DeclareMathOperator\tr{tr}

\DeclareMathOperator\End{End}

\DeclareMathOperator\ad{ad}

\DeclareMathOperator\vol{vol}


\newcommand{\Scal}{{\rm Scal}}

\newcommand{\ga}{\mathfrak{a}}

\newcommand{\gh}{\mathfrak{h}}

\newcommand{\gn}{\mathfrak{n}}

\newcommand{\gr}{\mathfrak{r}}

\newcommand{\gz}{\mathfrak{z}}

\newcommand{\so}{\mathfrak{so}}


\newcommand{\ma}{\textcolor{magenta}}
\newcommand{\co}{\lrcorner}

\newcommand{\st}{\ |\ }

\newcommand{\diag}{{\rm diag}}
\newcommand{\rank}{\mathrm{rank}}

\newcommand{\sst}{\scriptscriptstyle}

\newcommand{\mn}{\mathfrak n }

\newcommand{\mz}{\mathfrak z }

\newcommand{\mk}{\mathfrak k }
\renewcommand{\ma}{\mathfrak a }

\newcommand{\mh}{\mathfrak h }

\newcommand{\mr}{\mathfrak r }

\newcommand{\lela}{\left\langle}
\newcommand{\rira}{\right\rangle}

\newcommand{\mcB}{\mathcal B}

\renewcommand{\l}{\lambda}

\newcommand{\R}{\mathbb R}

\newcommand{\nc}{\newcommand}
 \nc{\iso}{\mathfrak{iso}}
 \nc{\sso}{\mathfrak{so}}
\nc{\Sym}{\mathrm{Sym}}
 \nc{\pr}{\operatorname{pr}} 
 \nc{\Dera}{\operatorname{Dera}} \nc{\Auto}{\operatorname{Auto}}

\nc{\noi}{\noindent}

\textheight=9in
\textwidth=6in
\oddsidemargin=0.2in
\evensidemargin=0.2in
\topmargin=0in

\numberwithin{equation}{section}

\title[Purely coclosed G$_2$-structures on 2-step nilpotent Lie groups]{Purely coclosed G$_{\mathbf2}$-structures on 2-step nilpotent Lie groups}

\author{Viviana del Barco}
\address{Universit\'e Paris-Saclay, CNRS, Laboratoire de math\'ematiques d'Orsay, 91405, Orsay, France and Universidad Nacional de Rosario, CONICET, 2000, Rosario, Argentina}
\email{viviana.del-barco@math.u-psud.fr}

\author{Andrei Moroianu}
\address{Universit\'e Paris-Saclay, CNRS,  Laboratoire de math\'ematiques d'Orsay, 91405, Orsay, France}
\email{andrei.moroianu@math.cnrs.fr}

\author{Alberto Raffero}
\address{Dipartimento di Matematica ``G. Peano'' \\ Universit\`a degli Studi di Torino\\ Via Carlo Alberto 10\\10123 Torino\\ Italy}
\email{alberto.raffero@unito.it}

\subjclass[2020]{53C15, 22E25, 53C30}
\keywords{purely coclosed G$_2$-structure, 2-step nilpotent Lie algebra, metric Lie algebra, G$_2$-Strominger system}

\begin{document}
\begin{abstract}
We consider left-invariant (purely) coclosed G$_2$-structures  on 7-dimensional 2-step nilpotent Lie groups. 
According to the dimension of the commutator subgroup, we obtain various criteria characterizing the Riemannian metrics induced by left-invariant purely coclosed G$_2$-structures. 
Then, we use them to determine the isomorphism classes of 2-step nilpotent Lie algebras admitting such type of structures. 
As an intermediate step, we show that every metric on a 2-step nilpotent Lie algebra admitting coclosed G$_2$-structures is induced by one of them. 
Finally, we use our results to give the explicit description of the metrics induced by purely coclosed G$_2$-structures on 2-step nilpotent Lie algebras with derived algebra of dimension at most two, 
up to automorphism. 
\end{abstract}
\maketitle

\section{Introduction}

A G$_2$-structure on a $7$-dimensional manifold $M$ is given by a $3$-form $\f\in\Omega^3(M)$ whose stabilizer at each point of $M$ is isomorphic to the automorphism group G$_2$ 
of the octonion algebra $\mathbb{O}$. By \cite{Gra}, $M$ admits G$_2$-structures if and only if its first and second Stiefel-Whitney classes vanish. 
Any G$_2$-structure $\f$ induces a metric $g_\f$ and an orientation on $M,$ and thus a Hodge duality operator $*_\f$. 

A G$_2$-structure $\f$ is said to be {\em purely coclosed} if it satisfies the conditions 
\begin{equation}\label{PCCIntro}
\d*_\f\f=0,\quad \d\f\W\f=0. 
\end{equation}
The equations \eqref{PCCIntro} characterize the pure class $\mathcal{W}_3$ in Fern\'andez-Gray's classification of G$_2$-structures \cite{FeGr} (see also \cite{Bry,ChSa}), 
while the condition $\d*_\f\f=0$ determines the wider class of {\em coclosed} G$_2$-structures $\mathcal{W}_1\oplus\mathcal{W}_3$. 
Remarkably, the latter are known to exist on every compact 7-manifold admitting G$_2$-structures  by an $h$-principle argument \cite{CrNo}. 
However, since this method is not constructive, different techniques are needed to obtain explicit examples. As for purely coclosed G$_2$-structures, no similar existence result is currently available. 

\smallskip 

The intrinsic torsion of a purely coclosed G$_2$-structure $\f$ can be identified with the 3-form $*_\f\d\f$, and so it vanishes identically if and only if $\f$ is closed (cf.~\cite{Bry}). 
When this happens, the Riemannian metric $g_\f$ induced by $\f$ is Ricci-flat and the corresponding Riemannian holonomy group is a subgroup of G$_2$. 

\smallskip

In theoretical physics, purely coclosed G$_2$-structures are closely related to the G$_2$-{\em Stromin\-ger system} of equations that arises considering the Killing spinor equations of 
10-dimensional string theory \cite{Str} on a compact 7-manifold (see e.g.~\cite{CGT,Iva} for more details and for the complete description of this system). 
Indeed, by \cite{FrIv,FrIv2}, the gravitino and dilatino Killing spinor equations with dilaton function $f$ on a compact 7-manifold $M$ are equivalent to the following 
system of equations for a G$_2$-structure $\f$ on $M$:
\[
\d*_\f\f = -2\,\d f\W*_\f\f,\quad \d\f\W\f=0.
\]
Any G$_2$-structure $\f$ satisfying them gives rise to a purely coclosed one via the global conformal change $ e^{\frac32f}\f$. Moreover, $\f$ itself is purely coclosed whenever $f$ is constant. 
Therefore, producing examples of compact 7-manifolds admitting purely coclosed G$_2$-structures constitutes an essential step towards the resolution of the G$_2$-Strominger system. 

\smallskip

Solutions to the G$_2$-Strominger system have been recently obtained in \cite{CGT} on $\mathbb{T}^3$-bundles over $K3$ surfaces. 
Previously, two examples of solutions with constant dilaton function were described in \cite{FIUV}. In these last two examples, the 7-manifold is the compact quotient of a 
simply connected nilpotent Lie group $N$ by a co-compact discrete subgroup (lattice) $\Gamma\subset N$, i.e., a {\em nilmanifold}, and the purely coclosed G$_2$-structure on $\Gamma\backslash N$ 
is induced by a left-invariant one on $N.$ Moreover, the Lie group $N$ is 2-step nilpotent, namely its Lie algebra $\gn$ is non-abelian and the corresponding derived algebra   
is contained in its center. In the non-compact setting, a further solution on a 2-step nilpotent Lie group was given in \cite{FIUVa}. 
It is worth observing that, when working with left-invariant G$_2$-structures on Lie groups,  the investigation can be done at the Lie algebra level, 
as left-invariant G$_2$-structures of a certain class on a Lie group are in one-to-one correspondence with G$_2$-structures of the same type on its Lie algebra.

\smallskip

In this paper, we carry out a systematic study of purely coclosed G$_2$-structures on 7-dimensional 2-step nilpotent Lie algebras, 
aimed at obtaining a classification of those admitting this type of G$_2$-structures, up to isomorphism, and characterizing the metrics induced by purely coclosed G$_2$-structures. 
 
\smallskip

We begin our investigation focusing on coclosed G$_2$-structures. Our first result is a refinement of the classification obtained in \cite{BFF}. 
There, the authors proved through a case-by-case study relying on the classification of 7-dimensional 2-step nilpotent Lie algebras, that each isomorphism class of such Lie algebras admits a coclosed 
$\G_2$-structure, with the exception of $\gn_{7,2,A}$ and $\gn_{7,2,B}$ (see the notation in Appendix \ref{2stepnilclass}, where we review the classification results obtained in \cite{Gon}). 
These last two Lie algebras are irreducible and have 2-dimensional derived algebra.
Here, we prove through direct arguments the following more precise statement which also takes into account the metric Lie algebra structure. 
\begin{theorem}\label{CCG2THM}
Let $\gn$ be a $7$-dimensional $2$-step nilpotent Lie algebra. If $\gn$ is irreducible and has $2$-dimensional derived algebra, then it carries no coclosed $\mathrm{G}_2$-structures. 
If $\gn$ is either reducible, or its derived algebra has dimension different from $2$, then every metric on $\gn$ is induced by a coclosed $\G_2$-structure. 
\end{theorem}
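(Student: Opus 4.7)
The plan is to split the analysis according to the dimension $k = \dim\gn'$ of the derived algebra. Given a metric $g$ on $\gn$, set $\gv = \gz^\perp$ and pick an orthonormal basis $\{e_1,\dots,e_7\}$ adapted to the flag $\gn' \subseteq \gz \subseteq \gn$, so that $\{e_{8-k},\dots,e_7\}$ is an orthonormal basis of $\gn'$. Since $\gn$ is 2-step nilpotent, $de^i = 0$ for $i\leq 7-k$ and $de^i \in \Lambda^2\gv^*$ for $i\geq 8-k$, so the Chevalley--Eilenberg differential is entirely encoded by the bracket map $\mu : \Lambda^2\gv \to \gn'$. The coclosed equation $d*_\f\f = 0$ thereby reduces to an algebraic identity on those components of $*_\f\f$ carrying at least one factor dual to $\gn'$, expressed through the 2-forms $de^{8-k},\dots,de^7$ on $\gv$.

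For the negative direction, one treats the two irreducible algebras $\gn_{7,2,A}$ and $\gn_{7,2,B}$ starting from their structure equations in the canonical form given in the Appendix. Writing a general $\G_2$-3-form $\f$ in the adapted basis, one computes $*_\f\f$ and $d(*_\f\f)$ explicitly and extracts from the resulting system of vanishing equations a subset that is algebraically inconsistent for any $\f$ lying in the $\G_2$-orbit. Heuristically, in these two algebras the pair of 2-forms defining $\mu$ has a rigid joint incidence structure on $\gv$ which cannot be matched by any $\G_2$-3-form in a coclosed way.

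For the positive direction one must produce, for every metric $g$, a coclosed $\G_2$-structure $\f$ with $g_\f = g$. The approach is to use the action of the orthogonal group of $(\gv, g|_\gv)$ to put $\mu$ into a normal form, reducing to finitely many explicit models per value of $k$. When $k = 1$, $\mu$ is a single 2-form on $\gv$ brought to symplectic normal form; one then chooses $\f$ so that the $e^7$-coefficient of $*_\f\f$ is proportional to $\mu\wedge\mu$, whence $d(*_\f\f) = 0$ automatically. When $k = 3$, either $\gn$ is reducible, or $\dim\gv = 4$ and $\mu$ is a triple of 2-forms on a 4-dimensional space; a canonical form for such triples combined with an adapted $\G_2$-3-form again yields a coclosed $\f$. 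When $\gn$ is reducible, $\gn = \gn_1 \oplus \R^s$, one chooses a basis adapted to the orthogonal decomposition of $g$ and extends a coclosed structure on the non-abelian factor $\gn_1$ by the flat abelian directions, leveraging lower-dimensional analogues.

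The main obstacle is the uniformity of the construction across metrics: after $\mu$ has been put in normal form, the residual metric parameters (essentially the invariants of $\mu$ under the orthogonal action on $\gv$) must be absorbable into the 7-dimensional freedom $\SO(7)/\G_2 \cong \R P^7$ of $\G_2$-structures compatible with $g$. This absorption succeeds in every case listed above, but fails precisely in the irreducible $k = 2$ case, where the combined algebraic and dimensional constraints on $\f$ produce the unique obstruction identified in the theorem.
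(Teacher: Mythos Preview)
Your outline has the right architecture---splitting by $k=\dim\gn'$---but several of the individual steps are either incorrect or too vague to constitute a proof, and you miss the key technical device the paper uses.

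\textbf{The $k=1$ case is wrong as stated.} You write that one should choose $\f$ so that ``the $e^7$-coefficient of $*_\f\f$ is proportional to $\mu\wedge\mu$''. This is dimensionally incoherent: writing $*_\f\f=\tfrac12\omega^2+\psi_-\wedge e^7$, the $e^7$-coefficient is the 3-form $\psi_-$, whereas $\mu\wedge\mu$ (with $\mu=de^7$ a 2-form on the 6-dimensional $\gr$) is a 4-form. The actual coclosed condition is $\psi_-\wedge\mu=0$, equivalently that $\mu$ is of type $(1,1)$ for the almost complex structure $J$ determined by $\f$. The paper achieves this by diagonalising the skew endomorphism $A=-j(z)$ in an orthonormal basis of $\gr$ and letting that basis induce the $\SU(3)$-structure, so that $JA=AJ$ holds automatically.

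\textbf{The $k=3$ case lacks the key idea.} There is no single ``canonical form for triples of 2-forms on a 4-space'' that does the job here. What the paper actually does is form the $3\times 3$ matrix $M_{ij}=g(\sigma_i,\d e^{j+4})$, where $\{\sigma_i\}$ is an orthonormal basis of $\Lambda^2_+\gr^*$, and then use the polar decomposition of $M$ to find $P\in\mathrm{O}(3)$ with $MP$ symmetric; the change of orthonormal basis of $(\gn')^*$ corresponding to $P$ then makes the coclosed criterion of Lemma~\ref{lsa} hold. Your sketch gives no indication of this mechanism, and the vague appeal to ``absorbing residual parameters into $\SO(7)/\G_2$'' is not how the argument works: one does not vary $\f$ within the fibre over $g$, one rotates the bases of $\gr$ and $\gn'$ separately.

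\textbf{The reducible $k=2$ case is not handled.} Your plan is to ``extend a coclosed structure on the non-abelian factor $\gn_1$ by the flat abelian directions, leveraging lower-dimensional analogues''. But an arbitrary metric $g$ on $\gn_1\oplus\R$ need not make this splitting orthogonal, and there is no pre-existing 6-dimensional result to leverage (indeed the half-flat $\SU(3)$ statement is a \emph{corollary} of this theorem). The paper instead shows directly that $\ga\neq 0$, picks a unit $e_7\in\ga$, and again uses a polar-decomposition argument on the $2\times 2$ matrix of self-dual components of $\d e^5,\d e^6$ to build the coclosed $\f$.

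\textbf{The negative direction is acceptable but cruder.} Your case-by-case computation on $\gn_{7,2,A}$ and $\gn_{7,2,B}$ is essentially what \cite{BFF} did and would work, but the paper gives a cleaner uniform argument (Proposition~\ref{decomp}): for \emph{any} coclosed $\f$ and any orthonormal $z_1,z_2\in\gn'$, the vector $\f(\cdot,z_1,z_2)^\sharp$ is forced to lie in $\ga$, so $\ga\neq 0$ and $\gn$ is decomposable. This avoids the classification entirely.
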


It follows from \cite{FeC,Lau,Wil} that every 7-dimensional 2-step nilpotent Lie algebra $\gn$ admits a (necessarily unique up to automorphism and scaling) {\em nilsoliton metric}, i.e., 
a metric $g$ whose Ricci endomorphism is of the form $\mathrm{Rc}(g) = \lambda\, \mathrm{Id} + D,$ for some $\lambda\in\R$ and some derivation $D$ of $\gn$. 
Nilsolitons correspond to left-invariant Ricci soliton metrics on nilpotent Lie groups \cite{Lau0} and so they constitute a generalization of Einstein metrics, that cannot exist on 
non-abelian nilpotent Lie groups by \cite[Thm.~2.4]{Mil}. 
Using the above observation together with Theorem \ref{CCG2THM}, we obtain a direct proof of \cite[Thm.~6.1, Thm.~6.3]{BFF}. 

\begin{corollary}\label{cornils}
Any $7$-dimensional $2$-step nilpotent Lie algebra admitting coclosed $\G_2$-struc\-tures has a coclosed $\G_2$-structure inducing the nilsoliton metric.
\end{corollary}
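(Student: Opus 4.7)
The plan is to deduce the statement as an immediate consequence of Theorem \ref{CCG2THM}, combined with the existence of nilsoliton metrics on every 7-dimensional 2-step nilpotent Lie algebra guaranteed by \cite{FeC,Lau,Wil}. The point is that Theorem \ref{CCG2THM} makes a much stronger assertion than the bare existence of a coclosed $\G_2$-structure on the eligible algebras: it says that \emph{every} inner product on $\gn$ is induced by one. Hence, once we know a distinguished metric exists, we get it for free.

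More concretely, I would proceed as follows. Let $\gn$ be a 7-dimensional 2-step nilpotent Lie algebra admitting a coclosed $\G_2$-structure. By the first part of Theorem \ref{CCG2THM}, $\gn$ cannot be irreducible with $2$-dimensional derived algebra; therefore $\gn$ is either reducible, or its derived algebra has dimension different from $2$. The second part of Theorem \ref{CCG2THM} then applies and tells us that every metric on $\gn$ is induced by some coclosed $\G_2$-structure. Now invoke \cite{FeC,Lau,Wil}: there exists a nilsoliton metric $g$ on $\gn$. Applying the preceding implication to this particular $g$ yields a coclosed $\G_2$-structure $\f$ on $\gn$ with $g_\f=g$, which is exactly what is claimed.

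There is essentially no obstacle at this stage of the paper, since the heavy lifting is all contained in Theorem \ref{CCG2THM}. The only thing to verify carefully is that the dichotomy in Theorem \ref{CCG2THM} is sharp, namely that the hypothesis ``$\gn$ admits coclosed $\G_2$-structures'' really places $\gn$ in the second branch of that theorem, so that the quantifier ``every metric'' is available; this is immediate from the first sentence of Theorem \ref{CCG2THM}. One could also add, as a concluding remark, that the corollary recovers \cite[Thm.~6.1, Thm.~6.3]{BFF} without case-by-case inspection of the classification list in Appendix \ref{2stepnilclass}, which is the main conceptual gain over the original argument.
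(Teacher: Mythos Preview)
Your proof is correct and follows exactly the same approach as the paper: combine the existence of a nilsoliton metric from \cite{FeC,Lau,Wil} with the ``every metric'' clause of Theorem \ref{CCG2THM}, after using the first clause to exclude the irreducible $\dim(\gn')=2$ case. There is nothing to add.
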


We then focus on purely coclosed G$_2$-structures on 2-step nilpotent Lie algebras.  
According to the dimension of the derived algebra, we obtain criteria for a given metric $g$ to be induced by such a structure. 
In order to state them here, we recall that for any 2-step nilpotent metric Lie algebra $(\gn,g)$ with derived algebra $\gn'$ and $g$-orthogonal decomposition $\gn=\gr\oplus\gn'$, 
the Chevalley-Eilenberg differential 
$\d:\mn^*\rightarrow \Lambda^2\mn^*$ vanishes on $\mr^*$ and defines an injection $j$ from $\mn'$ into $\so(\mr)\simeq \Lambda^2\mr^*$ (we refer the reader to Sect.~\ref{Sect2stepNil} for the precise details). 
In particular, $\dim (\mn')\leq \dim (\so(\mr))$, so when $\gn$ is 7-dimensional, this implies that the dimension of $\gn'$ is at most 3. 
\begin{theorem}\label{th} 
Let $(\mn,g)$ be a $7$-dimensional $2$-step nilpotent metric Lie algebra with derived algebra $\gn'$, and consider the $g$-orthogonal decomposition $\mn=\mr\oplus\mn'$.
\begin{enumerate}[$($i$)$]
\item\label{thi} If $\dim(\mn')=1$, there exists a purely coclosed $\G_2$-structure on $\gn$ inducing the metric $g$ if and only if $\tr^2(j(z)^2)=4\tr(j(z)^4)$ for every $z\in \mn'$.
\item\label{thii} If $\dim(\mn')=2$, there exists a purely coclosed $\G_2$-structure on $\gn$ inducing the metric $g$ if and only if 
there exists an oriented $4$-dimensional subspace $\tilde\gr\subset \gr$ with $\d(\mn')^*\subset \Lambda^2\tilde\mr^*$ such that for every orthonormal basis $\{\zeta_1,\zeta_2\}$ of $(\mn')^*$, 
the self-dual components $\d\zeta_1^+,\ \d\zeta_2^+\in \Lambda^2_{\sst+}\tilde\gr^*$ of $\d\zeta_1,\ \d\zeta_2\in \Lambda^2\tilde \gr^*$ are orthogonal and have equal norms.
\item\label{thiii} If $\dim(\mn')=3$, there exists a purely coclosed $\G_2$-structure on $\gn$ inducing the metric $g$ if and only if for some orientation of the $4$-dimensional space $\gr$, 
and for every orthonormal basis $\{\zeta_1,\zeta_2,\zeta_3\}$ of $(\mn')^*$, the Gram matrix of the self-dual components  of their differentials in 
$\Lambda^2_{\sst+}\gr^*$, $(S_{ij}) \coloneqq (g( \d\zeta_i^+,\d\zeta_j^+))$, satisfies $\tr^2(S)=2\tr(S^2)$. 
\end{enumerate}
\end{theorem}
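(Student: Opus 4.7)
The plan is to analyze each of the three cases by parameterizing G$_2$-structures inducing $g$ adapted to the orthogonal decomposition $\gn = \gr \oplus \gn'$, and then translating the purely coclosed equations $\d *_\varphi \varphi = 0$ and $\d\varphi \wedge \varphi = 0$ into linear algebra conditions on $j$. A key simplification throughout is that in a 2-step nilpotent Lie algebra, the Chevalley-Eilenberg differential vanishes on $\gr^*$ and maps $(\gn')^*$ into $\Lambda^2\gr^*$, so every form in $\Lambda^k\gr^*$ is automatically closed.

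\emph{Case (i).} When $\dim\gn'=1$, let $\zeta$ denote the unit element of $(\gn')^*$; then every G$_2$-structure inducing $g$ can be written as $\varphi = \zeta \wedge \omega + \rho$ for some SU(3)-structure $(\omega, \rho)$ on $(\gr, g|_\gr)$, with dual $*_\varphi \varphi = \tfrac{1}{2}\omega\wedge\omega - \zeta \wedge \hat\rho$ (where $\hat\rho$ is the conjugate $3$-form). Since $\d\omega = \d\rho = \d\hat\rho = 0$, one obtains $\d\varphi = \d\zeta \wedge \omega$ and $\d*_\varphi\varphi = -\d\zeta \wedge \hat\rho$, so the purely coclosed conditions become $\d\zeta \wedge \omega\wedge\omega = 0$ ($\omega$-primitivity) and $\d\zeta \wedge \hat\rho = 0$ (type $(1,1)$), jointly meaning $\d\zeta = j(z) \in \Lambda^{1,1}_0$ with respect to the almost Hermitian structure induced by $(\omega,\rho)$. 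Putting $j(z)$ in standard form with singular values $a, b, c \geq 0$, such an SU(3)-structure exists if and only if one of $a, b, c$ equals the sum of the other two; rewriting via $\tr(j(z)^2) = -2(a^2+b^2+c^2)$ and $\tr(j(z)^4) = 2(a^4+b^4+c^4)$ yields the stated trace identity.

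\emph{Case (iii).} When $\dim\gn'=3$, $\gr$ is $4$-dimensional and orienting it induces the self-dual decomposition $\Lambda^2\gr^* = \Lambda^2_{\sst+}\gr^* \oplus \Lambda^2_{\sst-}\gr^*$. The plan is to use the parameterization $\varphi = \vol_{\gn'} + \sum_i \zeta_i \wedge \omega_i$ with $\{\zeta_i\}$ an orthonormal basis of $(\gn')^*$ and $\{\omega_i\}$ an orthonormal hyperk\"ahler triple in $\Lambda^2_{\sst+}\gr^*$, and to set $C_{ij} \coloneqq g(\d\zeta_i^+, \omega_j)$. A direct computation, using $\d\omega_i = 0$, gives $\d\varphi \wedge \varphi = 2\tr(C)\,\vol_\gn$, while $\d*_\varphi\varphi$ is proportional to the antisymmetric part of $C$, so the purely coclosed system is equivalent to $C$ being symmetric and traceless. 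Using the SVD together with the $O(3)$ freedom in the choice of hyperk\"ahler triple, $C$ can be diagonalized with entries $\epsilon_i \sigma_i$, where $\sigma_i \geq 0$ are the singular values and $\epsilon_i \in \{\pm1\}$ are freely chosen; tracelessness then reduces to $\sum_i \epsilon_i \sigma_i = 0$, which is solvable if and only if the $\sigma_i$ satisfy the degenerate triangle condition. By Heron's formula applied to $\sigma_i = \sqrt{\mu_i}$, with $\mu_i$ the eigenvalues of $S = CC^T$, this amounts exactly to $\tr^2(S) = 2\tr(S^2)$.

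\emph{Case (ii) and main obstacle.} This case is the principal obstacle, since $\gn$ has no intrinsic distinguished unit vector adapted to a G$_2$-structure. The first step is to show that a purely coclosed G$_2$-structure on $\gn$ forces the existence of a $1$-dimensional subspace $\ell \subset \gr$ contained in the center of $\gn$, equivalently that $\tilde\gr \coloneqq \ell^\perp \cap \gr$ (of dimension~$4$) satisfies $\d(\gn')^* \subset \Lambda^2\tilde\gr^*$. Given such $\ell$, pick a unit generator $e \in \ell$ and write $\varphi = e^\flat \wedge \omega + \rho$ for an SU(3)-structure on the $6$-dimensional subspace $W = \tilde\gr \oplus \gn'$; since $\d$ vanishes on $e^\flat$ and on $\tilde\gr^*$, the $7$-dimensional purely coclosed equations descend to a reduced system on $W$ structurally analogous to case (i), but with $W$ having $2$-dimensional derived subspace $\gn'$. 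Unfolding this system using the self-dual decomposition on $\tilde\gr^*$ then produces the orthogonality and equal-norm conditions on $\d\zeta_1^+, \d\zeta_2^+$ stated in the theorem, while sufficiency follows by reversing the construction to build $(\omega,\rho)$ explicitly. The technical heart is the intrinsic identification of $\ell$ from $\varphi$, which forces $\gz(\gn) \cap \gr$ to be nontrivial.
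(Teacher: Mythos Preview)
Your overall strategy matches the paper's closely: the SU(3)-reduction in case~(i), the self-dual decomposition and symmetric traceless matrix criterion in case~(iii), and the recognition that case~(ii) hinges on producing a central direction $\ell\subset\gr$ all mirror the paper's Propositions~4.2, 4.10 and 4.3 respectively. Your Heron/degenerate-triangle reformulation of the trace condition is exactly the paper's Lemma~4.11.

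There is, however, a genuine gap in case~(iii). Your parametrization $\varphi = \vol_{\gn'} + \sum_i \zeta_i\wedge\omega_i$ with $\{\omega_i\}$ a hyperk\"ahler triple on $\gr$ only describes those G$_2$-structures for which $\gn'$ is an \emph{associative} (calibrated) $3$-plane. An arbitrary G$_2$-structure inducing $g$ need not calibrate $\gn'$, so for the ``only if'' direction you cannot assume $\varphi$ has this form. The paper handles this with a separate lemma (Lemma~4.9): starting from a (purely) coclosed $\varphi$ that does \emph{not} calibrate $\gn'$, it builds an adapted basis with $e_6,e_7\in\gn'$, writes $\tilde e_5 = \lambda e_4 + \mu e_5$ for the remaining unit vector in $\gn'$, and then explicitly checks that the G$_2$-structure $\tilde\varphi$ induced by the rotated basis $\{\tilde e_i\}$ is again (purely) coclosed and now calibrates $\gn'$. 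This reduction is not automatic and requires a direct computation of $\d*_\varphi\varphi$ and $\varphi\wedge\d\varphi$ in the tilted frame; without it your argument only characterizes metrics admitting a purely coclosed structure \emph{that calibrates} $\gn'$, which is a priori a stronger condition.

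For case~(ii) you correctly flag the identification of $\ell$ as the crux but do not indicate how to find it. The paper's mechanism is concrete: given any $g_\varphi$-orthonormal basis $\{z_1,z_2\}$ of $\gn'$, set $x\coloneqq\varphi(\cdot,z_1,z_2)^\sharp$; then a direct computation of $\d*_\varphi\varphi$ in an adapted basis with $e_5=z_1$, $e_6=z_2$, $e_7=x$ shows that coclosedness forces $e_7\lrcorner\,\d e^5 = e_7\lrcorner\,\d e^6 = 0$, i.e.\ $x\in\ga$. This is the step you would need to supply.
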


As a consequence of this this result, we obtain the classification of all 7-dimensional 2-step nilpotent Lie algebras admitting purely coclosed G$_2$-structures, up to isomorphism. 
\begin{theorem} 
A $7$-dimensional $2$-step nilpotent Lie algebra $\mn$ admits purely coclosed $\G_2$-structures if and only if $\mn$ is not isomorphic to $\gh_3\oplus\bR^4$, $\gn_{7,2,A}$ or $\gn_{7,2,B}$, 
where $\gh_3$ denotes the $3$-dimensional Heisenberg Lie algebra.
\end{theorem}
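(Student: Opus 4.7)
My plan is to derive the classification theorem by combining Theorem \ref{CCG2THM}, Theorem \ref{th}, and a case-by-case analysis along the list of isomorphism classes of 7-dimensional 2-step nilpotent Lie algebras reviewed in Appendix \ref{2stepnilclass}.

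For the \emph{only if} direction, since every purely coclosed $\G_2$-structure is in particular coclosed, Theorem \ref{CCG2THM} immediately rules out $\gn_{7,2,A}$ and $\gn_{7,2,B}$, as they are irreducible with 2-dimensional derived algebra. It remains to exclude $\gh_3\oplus\bR^4$. This Lie algebra has 1-dimensional derived algebra $\gn' = \Span\{z\}$, and its bracket is the Heisenberg bracket of $\gh_3$ extended trivially, so for \emph{every} inner product $g$ on $\gh_3\oplus\bR^4$ the skew-symmetric endomorphism $j(z)\in\so(\gr)$ has rank exactly $2$. Consequently its eigenvalues are $\pm i\lambda,0,0,0,0$ with $\lambda>0$, whence $\tr(j(z)^2)=-2\lambda^2$ and $\tr(j(z)^4)=2\lambda^4$, so $\tr^2(j(z)^2)=4\lambda^4\neq 8\lambda^4=4\tr(j(z)^4)$. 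Theorem \ref{th}(\ref{thi}) then forbids any purely coclosed $\G_2$-structure inducing $g$, and since every $\G_2$-structure induces some metric, $\gh_3\oplus\bR^4$ admits none at all.

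For the \emph{if} direction, we go through each remaining isomorphism class and exhibit an inner product for which the relevant criterion in Theorem \ref{th} holds. Grouping by $\dim(\gn')$: if $\dim(\gn')=1$, the Lie algebra is either $\gh_5\oplus\bR$ (rank-$4$ bracket) or $\gh_7$ (rank-$6$ bracket). Writing the nonzero eigenvalues of $j(z)$ as $\pm i\lambda_k$ and setting $a_k=\lambda_k^2$, the criterion of Theorem \ref{th}(\ref{thi}) reduces to $2(a_1a_2+a_1a_3+a_2a_3)=a_1^2+a_2^2+a_3^2$, which admits positive solutions in both cases (e.g.\ $a_1=a_2$, $a_3=0$ for $\gh_5\oplus\bR$, and $a_1=a_2$, $a_3=4a_1$ for $\gh_7$), and such eigenvalue patterns are realized by rescaling an adapted orthonormal basis. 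If $\dim(\gn')=2$, a suitable 4-dimensional subspace $\tilde\gr\subset\gr$ containing $\d(\gn')^*$ can be selected for each admissible algebra, and the orthogonality and equal-norm conditions on $\d\zeta_1^+,\d\zeta_2^+$ are enforced by tuning the metric in $\tilde\gr$ and in $\gn'$. If $\dim(\gn')=3$, one chooses an orientation on $\gr$ and verifies that the Gram matrix $S$ of the self-dual parts satisfies the identity $\tr^2(S)=2\tr(S^2)$, equivalently $2(\mu_1\mu_2+\mu_1\mu_3+\mu_2\mu_3)=\mu_1^2+\mu_2^2+\mu_3^2$ for its eigenvalues.

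The main obstacle is not the necessity part, which is a short computation, but rather the sufficiency part: for each of the many isomorphism classes with $\dim(\gn')\in\{2,3\}$ one has to explicitly construct an inner product realizing the rather intricate algebraic conditions in parts (\ref{thii}) and (\ref{thiii}) of Theorem \ref{th}. The choice of the subspace $\tilde\gr$ in (\ref{thii}) and the choice of orientation in (\ref{thiii}) are crucial and non-trivial, as they depend on the explicit form of the structure constants of each algebra; the verification for reducible algebras such as $\gh_3\oplus\gh_3\oplus\bR$ is immediate, while irreducible ones require a careful adaptation of the basis. Once these constructions are carried out—and the abstract announces that the resulting metrics are described explicitly up to automorphism for $\dim(\gn')\le 2$—the theorem follows by combining all cases with the necessity argument above.
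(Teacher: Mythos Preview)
Your overall strategy matches the paper's: necessity via Theorem \ref{CCG2THM} plus the rank-$2$ computation for $\gh_3\oplus\bR^4$ (exactly Corollary \ref{cor:pcch3}), and sufficiency by producing, for every remaining isomorphism class, a metric satisfying the relevant criterion in Theorem \ref{th}. The paper carries this out in Section \ref{metricn2}.

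However, one specific claim in your sketch is false and would lead you astray. You write that ``the verification for reducible algebras such as $\gh_3\oplus\gh_3\oplus\bR$ is immediate,'' but this is precisely the case that is \emph{not} immediate. The paper shows (Proposition \ref{p56}) that no metric making the decomposition $\gh_3\oplus\gh_3\oplus\bR$ orthogonal is induced by a purely coclosed $\G_2$-structure; one must take a metric with non-trivial cross terms between the two $\gh_3$ factors (parameters $a,b$ with $0\le a<b<1$ and a specific relation between $E,F,G$). So the ``obvious'' candidate fails, and the existence requires the full metric classification of \cite{ReVi}. By contrast, the irreducible algebras with $\dim(\gn')=3$ are handled rather mechanically.

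A second, minor deviation: for $\dim(\gn')=3$ the paper does not verify the Gram-matrix identity $\tr^2(S)=2\tr(S^2)$ from Theorem \ref{th}(\ref{thiii}). Instead it uses the more direct Lemma \ref{lsa}: one writes down an explicit coframe $\mathcal{C}$ with $\gn'=\langle e_5,e_6,e_7\rangle$ and checks that the $3\times 3$ matrix $M^{\mathcal{C}}_{ij}=g(\sigma_i,\alpha_j)$ is symmetric and trace-free. This bypasses the polar-decomposition step hidden in Theorem \ref{th}(\ref{thiii}) and makes the seven case-by-case verifications in Section \ref{subn3} essentially one-line computations. Your proposed route through the eigenvalue identity for $S$ would also work, but is less efficient in practice. (Also, minor typo: $\gh_5\oplus\bR$ should be $\gh_5\oplus\bR^2$.)
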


Theorem \ref{th}, combined with metric classification results from \cite{DiSc,ReVi}, allows us to give the explicit description, up to automorphisms, 
of the metrics induced by purely coclosed G$_2$-structures on every 2-step nilpotent Lie algebra admitting such structures and with derived algebra of dimension at most two. 
In the remaining case, where $\dim(\gn')=3$, the lack of classification of metric Lie algebra structures prevents us from obtaining a similar result. 
Nevertheless, we show that each of these Lie algebras carries purely coclosed G$_2$-structures but also metrics which are not induced by any of them.

Finally, for each 2-step nilpotent Lie algebra of dimension 7, we are able to determine whether its nilsoliton metric is induced by a purely coclosed $\G_2$-structure  
(see Corollaries \ref{ricci1}, \ref{ricci2}, \ref{ricci3}).

It is worth stressing that all Lie groups corresponding to the Lie algebras carrying purely coclosed G$_2$-structures admit a lattice (cf.~\cite{Mal}). 
Therefore, the results above provide many new examples of (compact) manifolds where the G$_2$-Strominger system may be investigated.   

\smallskip

The paper is organized as follows. 
In Sect.~\ref{sectPCCG2}, we review some preliminaries on G$_2$-structures, and in Sect.~\ref{Sect2stepNil} we recall the main properties of 
2-step nilpotent metric Lie algebras.  Theorem \ref{CCG2THM} and Theorem \ref{th} are proved in Sect.~\ref{S5}. The discussion is divided into three parts according to the dimension of the derived algebra $\gn'$. 
Finally, in Sect.~\ref{metricn2} we describe the metrics induced by purely coclosed G$_2$-structures on 2-step nilpotent metric Lie algebras $\gn$ with $\dim(\gn')\leq 2$, 
and for each Lie algebra in the remaining case $\dim(\gn')=3$ we construct purely coclosed G$_2$-structures, as well as metrics which are not compatible with any purely coclosed G$_2$-structure.

\section{Preliminaries on G$_2$-structures}\label{sectPCCG2}
\subsection{Basic definitions}\label{BasicDef}  
A G$_2$-structure on a 7-dimensional vector space $V$ is defined by a 3-form $\f\in\Lambda^3V^*$ satisfying the non-degeneracy condition
\begin{equation}\label{NonDegG2}
v\co \f \W v \co \f \W \f \neq 0, \quad \forall~v\in V\smallsetminus\{0\}.
\end{equation}
Since the stabilizer $\GL(V)_\f\subset\GL(V)$ of any such 3-form is isomorphic to the exceptional Lie group G$_2$,
 the set $\Lambda^3_{\sst+}V^*$ of all G$_2$-structures on $V$ is isomorphic to $\GL(7,\R)/\G_2$ and thus open in $\Lambda^3V^*$.  

A G$_2$-structure $\f\in\Lambda^3_{\sst+}V^*$ gives rise to a unique inner product $g_\f$ and orientation on $V$ with corresponding volume form $\vol_\f$ satisfying 
\begin{equation}\label{metvolG2}
g_\f(v,w)\vol_\f = \frac16\, v\co\f\W w\co\f\W\f. 
\end{equation}
Moreover, there exists a $g_\f$-orthonormal basis $\mathcal{B} = \{e_1,\ldots,e_7\}$ of $V$ with dual basis $\mathcal{B}^*=\{e^1,\ldots,e^7\}$ such that 
\begin{equation}\label{G2adapted}
\begin{split}
\f 		&= e^{127}+e^{347}+e^{567} + e^{135}-e^{146}-e^{236}-e^{245}, \\
*_\f\f		&= e^{1234}+e^{1256}+e^{3456} +e^{1367}+e^{1457}+e^{2357}-e^{2467},
\end{split}
\end{equation}
where $*_\f$ is the Hodge operator determined by $g_\f$ and $\vol_\f$, and $e^{ijk\cdots}$ is a shorthand for the wedge product of covectors $e^i\W e^j\W e^k\W\cdots$. 
We shall call both $\mathcal{B}$ and $\mathcal{B}^*$ {\em adapted} bases to the G$_2$-structure $\f$. 

On the other hand, given an inner product $g$ on $V,$ we can consider a $g$-orthonormal basis $\mathcal{B}$ of $V$ and the G$_2$-structure $\f\in\Lambda^3_{\sst+}V^*$ 
having $\mathcal{B}$ as an adapted basis. By \eqref{metvolG2}, the metric $g_\f$ induced by $\f$ coincides with $g$. 
We shall refer to such $\f$ as the G$_2$-structure {\em induced} by the basis $\mathcal{B}$. 
Hence, there is a surjective map 
\[
\mathcal{G} : \Lambda^3_{\sst+}V^* \rightarrow \mathcal{S}^2_{\sst+}V^*,\quad \mathcal{G}(\f) = g_\f, 
\]
which is not injective, as the set of all G$_2$-structures inducing the same metric is parametrized by $\SO(7)/\G_2\cong\R P^7$ (see \cite[Remark 4]{Bry} for an explicit description). 
The G$_2$-structures belonging to $\mathcal{G}^{-1}(g)$ will be called {\em compatible} with $g$. 

\medskip

Consider now a G$_2$-structure $\f$ on $V,$ let $z\in V$ be a unit vector and denote by $W$ the 6-dimensional $g_\f$-orthogonal complement of $\langle z\rangle\subset V.$ 
Then, the G$_2$-structure $\f$ induces an SU(3)-structure $(h,J,\omega,\psip,\psim)$ on $W$ by means of the identities
\[
\f = \omega\W z^\flat + \psip,\quad *_\f\f = \frac12\,\omega\W\omega + \psim \W z^\flat,\quad g_\f = h + z^\flat \otimes z^\flat,
\]
where $z^\flat\in V^*$ denotes the $g_\f$-dual covector of $z$. 
Recall that the non-degenerate 2-form $\omega$ and the 3-forms $\psip,\psim$ satisfy the {\em compatibility condition} $\omega\W\psi_{\sst\pm}=0$ and the {\em normalization condition} 
\[
\psip\W\psim=\frac23\,\omega^3 = 4\vol_h,
\]
where $\vol_h$ is the volume form of the inner product $h$. 
Moreover, the $h$-orthogonal complex structure $J\in\End(W)$ is related to $h$ and $\omega$ via the identity $\omega = h(J\cdot,\cdot)$. 
Finally, there exists an adapted basis $\mathcal{B} = \{e_1,\ldots,e_7\}$ to $\f$ with $e_7=z$ and such that $\{e_1,\ldots,e_6\}$ is an $h$-orthonormal basis of $W$ which is {\em adapted} to 
the SU(3)-structure, that is to say
\begin{equation}\label{SU3adapted}
\omega = e^{12}+e^{34}+e^{56},\quad
\psip	= e^{135}-e^{146}-e^{236}-e^{245},\quad
\psim = e^{136}+e^{145}+e^{235}-e^{246}, 
\end{equation}
and $J(e_{2k-1}) = e_{2k}$, $k=1,2,3$. As before, any orthonormal basis $\{e_1,\ldots, e_6\}$ of $W$ {\em induces} an SU(3)-structure, 
namely, the structure  defined by \eqref{SU3adapted} in the given basis.

This procedure can be reversed, allowing one to obtain a G$_2$-structure on the 1-dimensional extension of a 6-dimensional vector space $W$ endowed with an SU(3)-structure $(h,J,\omega,\psi_\pm)$. 
In detail, if $\{e_1,\ldots,e_6\}$ is a basis of $W$ which is adapted to the SU(3)-structure, then the 7-dimensional vector space $V = W\oplus\langle z \rangle$ is endowed with a G$_2$-structure 
$\f$ having $\{e_1,\ldots,e_6,z\}$ as an adapted basis.

\subsection{(Purely) coclosed G$_{\mathbf2}$-structures}\label{G2mfds} 
Let $M$ be a 7-manifold endowed with a G$_2$-structure $\f\in\Omega^3(M)$.  
By \cite[Prop.~1]{Bry}, there exist unique differential forms $\tau_0\in\mathcal{C}^\infty(M)$, $\tau_1\in\Omega^1(M)$, 
$\tau_2\in\Omega^2_{14}(M)\coloneqq\{\alpha\in\Omega^2(M) \st \alpha \W *_\f\f =0\}$, and $\tau_3\in\Omega^3_{27}(M)\coloneqq \{\gamma\in\Omega^3(M) \st \gamma\W\f= 0,~\gamma\W*_\f\f=0\}$ such that 
\[
\begin{split}
\d\f &= \tau_0\,*_\f\f+3 \tau_1\wedge \f+*_\f\tau_3,\\
\d*_\f\f&=4 \tau_1\wedge *_\f\f+\tau_2\W\f.
\end{split}
\]
These differential forms are called the {\em torsion forms} of the G$_2$-structure $\f$, as they completely determine its intrinsic torsion (see also \cite{FeGr}).  

\smallskip

A G$_2$-structure $\f$ is said to be {\em coclosed} if it satisfies the equation
\[
\d*_\f\f=0.
\]
In terms of the torsion forms, the above condition is equivalent to the vanishing of $\tau_1$ and $\tau_2$.  
Coclosed G$_2$-structures constitute the class $\mathcal{W}_1\oplus\mathcal{W}_3$ in Fern\'andez-Gray's classification of G$_2$-structures \cite{FeGr}. 
The ``pure'' subclasses $\mathcal{W}_1$, $\mathcal{W}_3$ are characterized by the vanishing of $\tau_3$ and $\tau_0$, respectively. 
In the former case, the coclosed G$_2$-structure is called {\em nearly parallel} and the associated metric $g_\f$ is Einstein with positive scalar curvature $\Scal(g_\f)=\frac{21}{8}(\tau_0)^2$. 
In the latter, the G$_2$-structure is called {\em purely coclosed}. 
Notice that the vanishing of $\tau_0$ is equivalent to the condition 
\[
\d\f\W\f=0, 
\]
as $\tau_0 =\frac17 *_\f(\d\f\W\f)$. 

\smallskip

Simple examples of 7-manifolds admitting (purely) coclosed G$_2$-structures can be obtained as follows. 
Let $N$ be a 6-dimensional manifold endowed with an SU(3)-structure $(h,J,\omega,\psi_\pm)$. 
Then, the product manifold $M=N \times \R$ is endowed with a G$_2$-structure defined by the non-degenerate 3-form 
\[
\f = \omega \W \d t +\psip,
\]
where $\d t$ denotes the global 1-form on $\R$. 
The Riemannian metric induced by $\f$ is $g_\f = h + \d t^2$ and the Hodge dual of $\f$ is given by $*_\f\f=\frac12\,\omega^2+\psim\W \d t$.  
Now, we have
\[
\begin{split}
\d*_\f\f &= \d \omega\W\omega + \d\psim\W \d t,\\
\d\f\W\f &=  \d \omega\W \d t\W\psip + \d\psip\W\omega\W \d t = -2\,\d\omega\W\psip\W \d t. 
\end{split}
\]
Thus, we immediately see that $\f$ is coclosed if and only if the SU(3)-structure satisfies the conditions 
\begin{equation}\label{HFSU3}
\d \omega\W\omega=0,\quad \d\psim=0.
\end{equation}
Moreover, $\f$ is purely coclosed if and only if the SU(3)-structure satisfies the additional condition 
\begin{equation}\label{special}
\d\omega\W\psip=0. 
\end{equation}
An SU(3)-structure satisfying the equations \eqref{HFSU3} is called {\em half-flat} (cf.~\cite{ChSa}).

\section{The structure of 2-step nilpotent metric Lie algebras}\label{Sect2stepNil}

We now consider the case when the 7-dimensional manifold is a Lie group $N$ endowed with a left-invariant G$_2$-structure, namely a non-degenerate 3-form $\f\in\Omega^3(N)$ 
that is invariant by left translations of $N$. In this case, the Riemannian metric $g_\f$ induced by $\f$ is also left-invariant.
 
The identification of the Lie algebra $\gn$ of $N$ with the tangent space to $N$ at the identity gives rise to a one-to-one correspondence between left-invariant tensors on $N$ 
and algebraic tensors of the same type defined on $\gn$. In particular, left-invariant Riemannian metrics on $N$ correspond to inner products on $\mn$, 
and left-invariant  G$_2$-structures on $N$ correspond to G$_2$-structures $\f$ on $\gn$, i.e., $\f\in\Lambda^3_{\sst+}\gn^*$. The conditions for $\f$ to be purely coclosed read $\d*_\f\f=0$ and $\d\f\W\f=0$,
where $\d$ denotes the Chevalley-Eilenberg differential of $\gn$.

\smallskip
Along this paper, a {\em metric Lie algebra} is the data of a real Lie algebra $\gn$ endowed with an inner product $g$. 
We focus on the case when $\gn$ is 2-step nilpotent, namely when $\gn$ is not abelian and $\ad_x^2=0$ for all $x\in\gn$, where $\ad$ denotes the adjoint map of $\mn$. 
Under this assumption, the structure of a metric Lie algebra $(\gn,g)$ can be described as follows (see \cite{Ebe} for more details). 

Denote by $\gn'\coloneqq[\gn,\gn]$ the derived algebra of $\mn$ and by $\gz$ its center.
As $\gn$ is 2-step nilpotent, we have $\{0\} \neq \gn'\subset \gz$. 
Let $\mr$ denote the $g$-orthogonal complement of $\gn'$ in $\gn$, 
so that $\gn = \gr \oplus \mn'$ as a direct sum of vector spaces. The  metric Lie algebra structure of $(\gn,g)$ is encoded into the injective linear map $j:\gn'\rightarrow\so(\gr)$ defined via the identity 
\begin{equation}\label{jz}
g(j(z)x,y) \coloneqq g(z,[x,y]),
\end{equation}
for all $z\in\gn'$ and $x,y\in\gr$.

Using the metric $g$, we can always identify $\gn$ with its dual Lie algebra $\gn^*$, and we can see the elements in $\Lambda^2\mn^*$ as skew-symmetric endomorphisms in $\so(\mn)$. 
Under these identifications, it is straightforward to check that $\d x^\flat =0$ if $x\in \mr$, where $x^\flat$ denotes the metric dual of $x$. 
In addition, for any $z\in\gn'$ the 2-form $\d z^\flat$ belongs to the subspace $\Lambda^2\mr^*$ of $\Lambda^2\mn^*$ and it corresponds to the skew-symmetric endomorphism $-j(z)\in \so(\mr)$.

The previous discussion allows us to associate to any 2-step nilpotent metric Lie algebra $(\mn,g)$, the following data: the inner  product spaces $(\mr,g_\mr)$ and $(\mn',g_{\mn'})$ together with an injection 
$j:\mn'\rightarrow \so(\mr)$.  Here and henceforth, we denote by $g_\mk$ the restriction of the metric $g$ to the subspace $\mk$ of $\mn$. 

Conversely, given two inner product spaces $(\mr,g_\mr)$ and $(\mn',g_{\mn'})$, together with an injective linear map $j:\mn' \rightarrow \so(\mr)$, 
we can define a 2-step nilpotent metric Lie algebra $(\mn,g)$ as follows. 
We set $\mn\coloneqq\mr\oplus \mn'$, we endow it with the metric $g=g_\mr+g_{\mn'}$ and we define the Lie bracket on $\mn$ so that the  elements in $\mn'$ are in the center, 
it satisfies $[\mr,\mr]\subset \mn'$ and it is determined by
\begin{equation*}
g_{\mn'}(z,[x,y]) \coloneqq g_\mr(j(z)(x),y), \quad  \mbox{for all } x,y\in \mr,\; z\in \mn'.
\end{equation*}
It is easy to verify that $\mn$ is a $2$-step nilpotent Lie algebra with derived algebra $\mn'$ (which justifies the initial notation).
\smallskip

For any 2-step nilpotent metric Lie algebra $(\mn,g)$ we have $\mn'\subset \mz$. 
Let $\ma$ denote the orthogonal complement of $\mn'$ inside $\mz$. It is straightforward to check from \eqref{jz} that $\ma$ is the common kernel of the endomorphisms $j(z) \in \so(\mr)$, 
when $z$ runs through $\mn'$. 
In addition, for  any $x\in \ma$, the orthogonal complement $\tilde\mn\coloneqq\lela x\rira^\bot$ is an ideal of $\mn$, which now decomposes as a direct sum of orthogonal ideals
$(\mn,g) = (\tilde\mn,g_{\tilde\mn}) \oplus (\lela x\rira,g_{\lela x\rira})$. 
\smallskip

Nilpotent Lie algebras of dimension 7 are classified up to isomorphism (see \cite{Gon}); we recall the classification of those which are real and 2-step nilpotent in Appendix \ref{2stepnilclass}. 
Throughout the paper, the structure equations of an $n$-dimensional Lie algebra $\gn$ are written with respect to a basis of covectors $\{f^1,\ldots,f^n\}$ 
by specifying the $n$-tuple $(\d f^1,\ldots, \d f^n)$.

\section{When is a metric induced by a (purely) coclosed G$_2$-structure?}\label{S5}

In this section, we will prove Theorem \ref{CCG2THM} and Theorem \ref{th}.
As we already observed, given a 7-dimensional 2-step nilpotent Lie algebra $\gn$, the possible dimensions of its derived algebra $\gn'$ are 1, 2 or 3. We shall discuss each case separately.

\subsection{Case 1: $\dim(\gn')=1$}\label{subsec:case1}

Let $(\mn,g)$ be a 2-step nilpotent metric Lie algebra of dimension 7 with 1-dimensional derived algebra $\mn'$ and consider the  $g$-orthogonal splitting $\gn = \gr\oplus\gn'$, where $\gr=(\gn')^\perp$.  
Given a unit vector $z\in\gn'$, the structure equations of $\gn$ are completely determined by the differential of the metric dual $z^\flat$ of $z$, which we denote by $\alpha\coloneqq\d z^\flat\in\Lambda^2\gr^*$.  
Let $A \coloneqq -j(z)\in \so(\mr)$ denote the skew-symmetric endomorphism corresponding to $\alpha\in \Lambda^2\mr^*$.  

Notice that, depending on the rank of $A$, $\gn$ is isomorphic to one of $\gh_3\oplus\bR^4$ ($\rank(A)=2$), $\gh_5\oplus\bR^2$ ($\rank(A)=4$), $\gh_7$ ($\rank(A)=6$), 
where $\mh_{i}$ denotes the Heisenberg Lie algebra of dimension $i$.

Let  $\f$ be a G$_2$-structure on $\gn$ such that $g_\f=g$. Then the 6-dimensional vector subspace $\gr\subset\gn$ is endowed with an SU(3)-structure $(h,J,\omega,\psi_\pm)$ (see Sect.~\ref{BasicDef}). 
In particular, we can write 
\begin{equation}
\label{eq:ph1}
\f = \omega\W z^\flat + \psip, \quad *_\f\f = \frac12\,\omega^2+\psim\W z^\flat. 
\end{equation}
Using the complex structure $J$ of $\gr$, we can give the following characterization.  

\begin{proposition}\label{pro:JA}
The $\G_2$-structure $\f$ in \eqref{eq:ph1} is coclosed if and only if $JA=AJ$ and, moreover, it is purely coclosed if and only if $\tr(JA)=0$.
\end{proposition}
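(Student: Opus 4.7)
The plan is to compute $\d\f$, $\d*_\f\f$, and $\d\f\W\f$ explicitly in terms of $\alpha=\d z^\flat$, the SU(3) forms $\omega,\psip,\psim$ on $\gr$, and $z^\flat$, then translate the vanishing conditions into algebraic conditions on $A=-j(z)\in\so(\gr)$ via the SU(3)-type decomposition of $\Lambda^2\gr^*$.

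First I would exploit that $\gn$ is $2$-step nilpotent with $\gn'=\langle z\rangle$: the Chevalley-Eilenberg differential vanishes on $\gr^*$, hence on all of $\Lambda^\bullet\gr^*$, while $\d z^\flat=\alpha$. From \eqref{eq:ph1} this yields
\begin{align*}
\d\f &=\omega\W\alpha,\\
\d*_\f\f &=-\psim\W\alpha,
\end{align*}
the sign coming from $\d(\psim\W z^\flat)=(-1)^3\psim\W\d z^\flat$. Thus $\f$ is coclosed iff $\psim\W\alpha=0$. For the remaining torsion condition I would expand
\[
\d\f\W\f=\omega\W\alpha\W(\omega\W z^\flat+\psip)=\omega^2\W\alpha\W z^\flat,
\]
since $2$-forms commute and the SU(3) compatibility $\omega\W\psip=0$ kills the second summand. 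Hence $\f$ is purely coclosed iff both $\psim\W\alpha=0$ and $\omega^2\W\alpha=0$ hold.

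Next I would translate these into conditions on $A$ via the SU(3) decomposition. Under the standard isomorphism $\so(\gr)\simeq\Lambda^2\gr^*$, $B\mapsto g(B\cdot,\cdot)$, the splitting of $\so(\gr)$ into $J$-commuting and $J$-anticommuting endomorphisms matches the type decomposition $\Lambda^2\gr^*=\Lambda^{1,1}_\R\oplus\Lambda^{2,0+0,2}_\R$. Wedging a $(1,1)$-form with the complex $(3,0)$-form $\Omega=\psip+i\psim$ lands in $\Lambda^{4,1}$, which vanishes in complex dimension three, so $\Lambda^{1,1}_\R\subset\ker(\,\cdot\W\psim\,)$; a dimension count (both $\Lambda^{2,0+0,2}_\R$ and $\Lambda^5\gr^*$ have real dimension $6$) forces equality. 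Thus $\psim\W\alpha=0$ iff $[A,J]=0$, proving the first assertion. For the second, a direct calculation in the SU(3)-adapted basis, in which $\omega^2=2(e^{1234}+e^{1256}+e^{3456})$, shows that $\omega^2\W\alpha$ is a scalar multiple of the volume form, the coefficient being (up to sign) $\tr(JA)$; hence $\omega^2\W\alpha=0$ iff $\tr(JA)=0$, giving the second assertion.

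The only delicate points are pinning down $\ker(\,\cdot\W\psim\,)$ as exactly $\Lambda^{1,1}_\R$ and fixing the normalization in $\omega^2\W\alpha=-\tr(JA)\,\vol_g$, but both reduce to elementary type and dimension considerations rather than lengthy computation.
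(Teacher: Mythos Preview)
Your proposal is correct and follows essentially the same route as the paper: both compute $\d*_\f\f=-\psim\W\alpha$ and $\d\f\W\f=\omega^2\W\alpha\W z^\flat$ using that the $\SU(3)$ forms on $\gr$ are closed and $\omega\W\psip=0$, then identify $\ker(\,\cdot\W\psim)=\Lambda^{1,1}_\R$ (equivalently $[J,A]=0$) and $\omega^2\W\alpha=0\Leftrightarrow\tr(JA)=0$. Your write-up even supplies a bit more justification for the kernel identification than the paper, which simply states it as a known fact.
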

\begin{proof}
Since $\omega$ and $\psim$ are forms on $\gr$, their differentials vanish,  so the G$_2$-structure $\f$ in \eqref{eq:ph1} is coclosed if and only if 
\[
0 = \d*_\f\f = -\psim\W\alpha. 
\]
Thus, $\alpha$ belongs to the kernel of the map $\cdot\W\psim:\Lambda^2\gr^*\rightarrow\Lambda^5\gr^*$, which is the space of real 2-forms of type $(1,1)$ with respect to $J$. 
In terms of the skew-symmetric endomorphism $A\in\so(\gr)$ corresponding to $\alpha$, this is equivalent to $JA = AJ.$ 
Moreover, the intrinsic torsion form $\tau_0$ of $\f$ vanishes if and only if  
\[
0 = \d\f \W \f = \omega^2 \W \alpha \W z^\flat,
\]
which is equivalent to the additional constraint $\tr(JA)=0$.
\end{proof}

Using Proposition \ref{pro:JA}, we can show the following result, which proves Theorem \ref{CCG2THM} for the case $\dim(\gn')=1$, and part (\ref{thi}) of Theorem \ref{th}.

\begin{proposition}\label{dimnn1}
Let $(\gn,g)$ be a $7$-dimensional $2$-step nilpotent metric Lie algebra with $\dim(\gn')=1$.  
\begin{enumerate}[$1)$]
\item There exists a coclosed $\G_2$-structure on $\gn$ inducing the metric $g$.
\item Furthermore, there exists a purely coclosed $\G_2$-structure on $\gn$ inducing the metric $g$ if and only if $\tr^2(j(z)^2)=4\tr(j(z)^4)$ for every $z\in \mn'$.
\end{enumerate}
\end{proposition}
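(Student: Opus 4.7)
The central idea is to apply Proposition \ref{pro:JA}: any G$_2$-structure inducing $g$ can be written in the form $\f = \omega \wedge z^\flat + \psip$ for a unit vector $z \in \gn'$ and some SU(3)-structure $(h, J, \omega, \psi_\pm)$ on $\gr$ compatible with $h \coloneqq g|_\gr$, and it is coclosed (respectively purely coclosed) if and only if the $h$-orthogonal complex structure $J$ commutes with $A \coloneqq -j(z)$ (respectively additionally satisfies $\tr(JA) = 0$). Thus both parts amount to producing such a $J$.

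For part (1), since $A \in \so(\gr)$, there is an $h$-orthonormal basis $\{e_1,\ldots,e_6\}$ of $\gr$ in which $A$ is block-diagonal with $2\times 2$ blocks $\bigl(\begin{smallmatrix} 0 & -\lambda_k \\ \lambda_k & 0 \end{smallmatrix}\bigr)$ acting on the pairs $(e_{2k-1},e_{2k})$ for some $\lambda_1,\lambda_2,\lambda_3\ge0$. Setting $Je_{2k-1} \coloneqq e_{2k}$ defines an $h$-orthogonal complex structure commuting blockwise with $A$, producing the desired coclosed G$_2$-structure via the SU(3)-structure determined by $\{e_1,\ldots,e_6\}$ as in \eqref{SU3adapted}.

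For part (2), the plan is to classify all $h$-orthogonal complex structures $J$ commuting with $A$ and to determine which traces $\tr(JA)$ are attainable. On each $2$-dimensional $A$-invariant block with $\lambda_k > 0$, the only $h$-orthogonal complex structures are $\pm J_0$, contributing $\mp 2\lambda_k$ to $\tr(JA)$, while blocks with $\lambda_k = 0$ contribute zero regardless of $J$. If two or three of the $\lambda_k$ coincide, $J$ can mix the corresponding blocks; however, a direct eigenvalue computation on the higher-dimensional eigenspace shows that $\tr(JA)$ still runs over the values $-2\sum_k\epsilon_k\lambda_k$ with $\epsilon_k\in\{\pm1\}$ arbitrary. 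Consequently, a purely coclosed G$_2$-structure inducing $g$ exists iff there exist signs $\epsilon_k\in\{\pm1\}$ with $\sum_{k=1}^3\epsilon_k\lambda_k = 0$, equivalently $\lambda_i = \pm\lambda_j\pm\lambda_k$ for some permutation of $\{1,2,3\}$.

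Finally, setting $a_k = \lambda_k^2$, and using $\tr(j(z)^2) = -2\sum_k a_k$ and $\tr(j(z)^4) = 2\sum_k a_k^2$, the identity $\tr^2(j(z)^2) = 4\tr(j(z)^4)$ becomes $(a_1+a_2+a_3)^2 = 2(a_1^2+a_2^2+a_3^2)$, which factors as $(a_1 - a_2 - a_3)^2 = 4a_2 a_3$ and is therefore equivalent to $\lambda_1 = \pm\lambda_2\pm\lambda_3$ (or a permutation), matching the combinatorial condition above. Homogeneity in $z$ reduces the statement ``for every $z\in\mn'$'' to a single unit vector. The point I expect to require the most care is the case of repeated $\lambda_k$, where $J$ commuting with $A$ is no longer forced to be block-diagonal, and one must verify explicitly that the enlarged family of $J$'s does not give rise to any additional values of $\tr(JA)$ beyond those obtained from the $\epsilon_k\in\{\pm1\}$ choices.
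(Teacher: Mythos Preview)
Your proposal is correct and follows essentially the same strategy as the paper: reduce via Proposition~\ref{pro:JA} to the existence of an $h$-orthogonal complex structure $J$ on $\gr$ commuting with $A$ (and with $\tr(JA)=0$ in the purely coclosed case), construct such a $J$ from a block-diagonal normal form of $A$, and match the trace condition to the factorization of $\tr^2(A^2)-4\tr(A^4)$. Part~1) and the ``if'' direction of part~2) are handled exactly as in the paper.

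The one place where your route diverges is the ``only if'' direction of part~2). You propose to classify all orthogonal $J$ commuting with $A$ and enumerate the attainable values of $\tr(JA)$, with a separate case analysis when some of the $\lambda_k$ coincide (and you rightly flag this as the delicate point). The paper bypasses this entirely with a single observation: since $A$ and $J$ are skew-symmetric and commute, the endomorphism $AJ$ is \emph{symmetric}, and $J$ commutes with $AJ$, so every eigenspace of $AJ$ is $J$-invariant and therefore even-dimensional. Hence the spectrum of $AJ$ is automatically of the form $(a,a,b,b,c,c)$, and $\tr(JA)=0$ gives $a+b+c=0$. Since $A^2=-(AJ)^2$, one reads off $\tr(A^2)$ and $\tr(A^4)$ directly from $a,b,c$ and the identity $\tr^2(A^2)=4\tr(A^4)$ follows in two lines, with no case distinction whatsoever. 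Your ``direct eigenvalue computation on the higher-dimensional eigenspace'' would ultimately rediscover this, but the paper's formulation makes the argument uniform and shorter; you may want to adopt it in place of the case split.
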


\begin{proof}\
\begin{enumerate}[1)]
\item Let $g$ be an inner product on $\mn$ and consider a unit vector  $z$ in $\mn'$. Then, $A \coloneqq -j(z)\in\so(\gr)$ and there exists a $g$-orthonormal basis $\{e_1,\ldots,e_6\}$ of $\gr$ 
such that the matrix of $A$ 
with respect to this basis has the form
\begin{equation}\label{Abd}
A = 
\begin{psmallmatrix}
0&-a&0&0&0&0\\ \noalign{\medskip}
a&0&0&0&0&0\\ \noalign{\medskip}
0&0&0&-b&0&0\\ \noalign{\medskip}
0&0&b&0&0&0\\ \noalign{\medskip}
0&0&0&0&0&-c\\ \noalign{\medskip}
0&0&0&0&c&0
\end{psmallmatrix},
\end{equation}
where $a,b,c\in\mathbb{R}$ are not all zero. Consider the SU(3)-structure  $(h,J,\omega,\psi_\pm)$ on $\gr$ induced by this basis $\{e_1,\ldots,e_6\}$. 
Then, $\omega$ satisfies \eqref{SU3adapted} and the complex structure $J$ satisfies $JA=AJ$. 
Thus, the 3-form $\f = \omega\W z^\flat+\psip$ verifies \eqref{eq:ph1} and, by Proposition \ref{pro:JA}, 
it defines a coclosed G$_2$-structure on $\gn$ with $g_\f=g$.   
\item  Let $\f$ be a purely coclosed G$_2$-structure  on $\gn$ such that $g_\f=g$, and let $z$ be a unit vector  in $\mn'$. Then $\f$ induces an SU(3)-structure $(h,J,\omega,\psi_\pm)$ on $\gr$ satisfying  
$\f = \omega\W z^\flat+\psip$ and $\omega=g_{\gr}(J\cdot,\cdot)$ (see Sect.~\ref{BasicDef}). 
So $\f$ is of the form \eqref{eq:ph1} and, by Proposition \ref{pro:JA}, we obtain that $JA=AJ$ and $\tr(JA)=0$, for $A=-j(z)$.
The eigenspaces of the symmetric endomorphism $AJ$ are preserved by $J$ so they have even dimension. Consequently the spectrum of $AJ$ is of the form $(a,a,b,b,c,c)$ with $a+b+c=0$, whence 
\[
\tr(A^2)=-\tr((AJ)^2)=-2(a^2+b^2+(a+b)^2)=-4(a^2+b^2+ab),
\]
and thus 
\[
\tr(A^4)=\tr((AJ)^4)=2(a^4+b^4+(a+b)^4)=4(a^2+b^2+ab)^2=\frac14\tr^2(A^2).
\]
It is clear that for any other vector $z'=\lambda z$ in $\mn'$, the corresponding matrix $A'=-j(z')$ verifies $A' =\lambda A$, so  the same equation holds by homogeneity.

Conversely, suppose that $g$ is an inner product on $\mn$ for  which $\tr^2(j(z)^2)=4\tr(j(z)^4)$ holds for every $z$ in $\mn'$.
Consider again a unit vector $z\in\mn'$ and a $g$-orthonormal basis $\{e_1,\ldots,e_6\}$ of $\gr$ in which the matrix of $A=-j(z)$ is given by \eqref{Abd}.
Since $\tr^2(A^2)=4\tr(A^4)$, we can write
\begin{eqnarray*}0&=&\tr(A^4)-\frac14\tr^2(A^2)=2(a^4+b^4+c^4)-(a^2+b^2+c^2)^2\\
&=&a^4+b^4+c^4-2(a^2b^2+b^2c^2+c^2a^2)\\
&=&(a+b+c)(a+b-c)(a-b+c)(a-b-c).
\end{eqnarray*}
By permuting $e_1$ with $e_2$ and/or $e_3$ with $e_4$ if necessary, we can change the signs of $a$ and/or $b$, so from the above equation we can assume that $a+b+c=0$.
Let $(h,J,\omega,\psi_\pm)$ be the SU(3)-structure on $\gr$ induced by the basis $\{e_1,\ldots,e_6\}$. 
Then, we have $JA=AJ=\diag(-a,-a,-b,-b,-c,-c)$, so $\tr(JA)=-2(a+b+c)=0$, and therefore the 3-form $\f = \omega\W z^\flat+\psip$ defines a purely coclosed G$_2$-structure  on $\gn$ with $g_\f=g$.  
\end{enumerate}
\end{proof}

One can prove that the condition $\tr^2(j(z)^2)=4\tr(j(z)^4)$ cannot hold whenever $j(z)$ has rank 2, so we can state the following.

\begin{corollary}\label{cor:pcch3}
The Lie algebra $\gh_3\oplus\R^4$ does not admit any purely coclosed $\G_2$-structure. 
\end{corollary}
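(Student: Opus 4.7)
The plan is to apply Proposition \ref{dimnn1}(2). Since $\gh_3\oplus\bR^4$ has $1$-dimensional derived algebra, the criterion to verify (or rather, to rule out) is whether there exists an inner product $g$ on $\gn=\gh_3\oplus\bR^4$ such that $\tr^2(j(z)^2)=4\tr(j(z)^4)$ for every $z\in\gn'$. I would argue that this fails for \emph{every} metric $g$, by showing that $j(z)$ necessarily has rank $2$.

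First I would pin down the rank. The center $\gz$ of $\gh_3\oplus\bR^4$ contains $\gn'$ together with the abelian factor $\bR^4$, so it has dimension $5$, and the bracket descends to a map $\Lambda^2(\gn/\gz)\to\gn'$. Since $\dim(\gn/\gz)=2$ and the induced bracket is nonzero (coming from the nontrivial bracket on $\gh_3$), this descended map has rank exactly $2$. Consequently, for any inner product $g$ and any unit $z\in\gn'$, the endomorphism $j(z)\in\so(\gr)$ corresponding to the pullback of this bracket has rank $2$.

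Next, choose an orthonormal basis of $\gr$ in which $A\coloneqq -j(z)$ takes the block form of \eqref{Abd} with $b=c=0$ and $a\ne 0$. A direct computation then gives
\[
\tr(A^2)=-2a^2,\qquad \tr(A^4)=2a^4,
\]
so that $\tr^2(A^2)=4a^4$ while $4\tr(A^4)=8a^4$. Since $a\ne 0$, these are never equal, so the criterion of Proposition \ref{dimnn1}(2) fails for every metric $g$, and $\gh_3\oplus\bR^4$ admits no purely coclosed $\G_2$-structure.

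The only nontrivial point is the rank statement, which is an invariant of the isomorphism class rather than of the chosen metric; beyond that the argument reduces to the elementary linear-algebra identity $\tr^2(A^2)\ne 4\tr(A^4)$ for a rank-$2$ skew-symmetric endomorphism of a $6$-dimensional space.
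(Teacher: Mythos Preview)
Your proposal is correct and follows essentially the same route as the paper: both verify that for any metric the endomorphism $A=-j(z)$ has rank $2$ and then check that the trace identity $\tr^2(A^2)=4\tr(A^4)$ fails. The only cosmetic difference is that the paper computes the traces via $A^2=\lambda P$ with $P$ a rank-$2$ projector, while you use the block form \eqref{Abd} with $b=c=0$; the arithmetic is identical. One small wording issue: the ``descended map'' $\Lambda^2(\gn/\gz)\to\gn'$ is a map between $1$-dimensional spaces, so it has rank $1$, not $2$; what you mean (and what you use) is that the induced alternating form on $\gn/\gz$ is nondegenerate, hence $j(z)$ has rank $\dim(\gn/\gz)=2$.
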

\begin{proof}
For every metric on $\gh_3\oplus\R^4$, the endomorphism $A=-j(z)$ has rank 2, for any non-zero $z\in \mn'$, so its square is proportional to an orthogonal projector on a 2-plane: 
$A^2=\lambda P$ with $\lambda\in\R\smallsetminus\{0\}$, $P^2=P$ and $\tr(P)=2$. Then $\tr(A^2)=2\lambda$, $\tr(A^4)=2\lambda^2$, so the equation $\tr^2(A^2)=4\tr(A^4)$ cannot hold.
\end{proof}


\subsection{Case 2: $\dim(\gn')=2$}

In this section we consider coclosed G$_2$-structures on 7-dimen\-sional 2-step nilpotent Lie algebras $\gn$ with $\dim(\gn')=2$.
From the classification reviewed in Appendix \ref{2stepnilclass}, we know that any such Lie algebra is isomorphic either to one of the decomposable Lie algebras 
\[
\gn_{5,2}\oplus\bR^2,\quad \gh_3\oplus\gh_3\oplus\bR,\quad \gh_3^{\bC}\oplus\bR,\quad \gn_{6,2}\oplus\bR,
\]
or to one of the indecomposable Lie algebras $\gn_{7,2,A},\gn_{7,2,B}$. 

We will first show that the existence of coclosed G$_2$-structures on $\gn$ forces its decomposability. More precisely, we have:

\begin{proposition} \label{decomp}
Let $\gn$ be a $7$-dimensional $2$-step nilpotent  Lie algebra with $2$-dimensional derived algebra $\gn'$ and let $\f$ be a coclosed $\mathrm{G}_2$-structure on $\gn$.  
If $z_1,z_2$ is any $g_\f$-orthonormal basis of $\gn'$, then the (unit length) vector $x\coloneqq\f(\cdot, z_1,z_2)^\sharp$ belongs to $\ga$. 
\end{proposition}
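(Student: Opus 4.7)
The plan is to exploit the coclosedness equation $\d*_\f\f = 0$ by computing directly in a basis adapted to both $\f$ and to the algebraic decomposition $\gn = \gr \oplus \gn'$.

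First, I would choose a $g_\f$-orthonormal basis $\{e_1,\ldots,e_7\}$ of $\gn$ adapted to $\f$ (so that \eqref{G2adapted} holds), with the additional requirement $e_6 = z_1$ and $e_7 = z_2$. Such a basis exists because $\G_2$ acts transitively on pairs of orthonormal vectors in $\R^7$. Inspecting \eqref{G2adapted}, the only summand of $\f$ containing both $e^6$ and $e^7$ is $e^{567}$, whence $\f(e_i,z_1,z_2) = \delta_{i5}$ and therefore $x = e_5$. In particular $x$ is a unit vector lying in $\gr \coloneqq (\gn')^\perp$. It remains to show that $x$ is central, i.e.\ that $j(z_1)x = j(z_2)x = 0$.

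Set $W \coloneqq \langle e_1,\ldots,e_4\rangle$, and let $\alpha \coloneqq \d z_1^\flat$, $\beta \coloneqq \d z_2^\flat \in \Lambda^2\gr^*$. Writing
\[ \alpha = \alpha_W + \alpha_5 \W e^5, \qquad \beta = \beta_W + \beta_5 \W e^5, \]
with $\alpha_W,\beta_W \in \Lambda^2 W^*$ and $\alpha_5,\beta_5 \in W^*$, the goal reduces to proving $\alpha_5 = \beta_5 = 0$. To extract this from $\d*_\f\f$, I would first expand $*_\f\f$ with respect to the splitting $\gn = W \oplus \langle e_5,e_6,e_7\rangle$, using \eqref{G2adapted}:
\[ *_\f\f = \vol_W + \omega_+\W e^5\W e^6 + \omega_0\W e^5\W e^7 + \omega_-\W e^6\W e^7, \]
where $\vol_W = e^{1234}$ and $\omega_+ \coloneqq e^{12}+e^{34}$, $\omega_0 \coloneqq e^{14}+e^{23}$, $\omega_- \coloneqq e^{13}-e^{24}$ are self-dual 2-forms on $W$ satisfying $\omega_\ast\W\omega_\ast = 2\,\vol_W$; in particular, wedging with $\omega_-$ defines an isomorphism $W^* \to \Lambda^3 W^*$.

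Using $\d e^i = 0$ for $i = 1,\ldots,5$, $\d e^6 = \alpha$, $\d e^7 = \beta$, the Leibniz rule expands $\d*_\f\f$ into terms which I would collect according to which of $e^5,e^6,e^7$ they contain. The components involving exactly $e^5\W e^7$ and $e^5\W e^6$ come solely from differentiating $\omega_-\W e^6\W e^7$ and reduce to
\[ (\omega_- \W \alpha_5)\W e^5\W e^7 \;-\; (\omega_- \W \beta_5)\W e^5\W e^6. \]
Coclosedness forces each of these to vanish, and the non-degeneracy of $\omega_-$ then gives $\alpha_5 = \beta_5 = 0$, proving that $x \in \ga$. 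The main technical step is the careful bookkeeping of signs in the expansion of $\d*_\f\f$; the conceptual underpinning is that $\{x,z_1,z_2\}$ is an associative triple, so $W$ is coassociative and $*_\f\f$ decomposes into $\vol_W$ plus three non-degenerate self-dual 2-forms on $W$ paired with the three coordinate 2-forms in $\langle e^5,e^6,e^7\rangle$.
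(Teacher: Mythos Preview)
Your proposal is correct and follows essentially the same approach as the paper: choose an adapted basis placing $z_1,z_2$ in two of the last three slots, expand $*_\f\f$ as $\vol_W$ plus three self-dual $2$-forms on the coassociative $4$-plane $W$ wedged with the coordinate $2$-forms on $W^\perp$, compute $\d*_\f\f$, and use the non-degeneracy of the relevant self-dual form to kill the $1$-form components $\alpha_5,\beta_5$. The only cosmetic difference is that the paper takes $z_1=e_5$, $z_2=e_6$ (so $x=e_7$ and the key non-degenerate form is $\sigma_3=e^{12}+e^{34}$), whereas you take $z_1=e_6$, $z_2=e_7$ (so $x=e_5$ and the key form is $\omega_-=e^{13}-e^{24}$); this is an immaterial relabeling.
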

\begin{proof}
Let $\f$ be a G$_2$-structure on $\gn$ and let $\{z_1,z_2\}$ be  an  orthonormal basis of $\mn'$. 
As G$_2$ acts transitively on ordered pairs of orthonormal vectors on $\bR^7$, there exists a basis $\{e_1,\ldots,e_7\}$ of $\gn$ adapted to $\f$ such that 
$z_1=e_5$ and $z_2=e_6$. Then, $\f$ and $*_\f\f$ can be written as in \eqref{G2adapted}, so $\f(\cdot, e_5,e_6)=e^7$. 
Consequently, the vector $x$ defined above is equal to $e_7$ 
and the thesis is equivalent to showing that $e_7\lrcorner\, \d e^5=e_7\lrcorner\,\d e^6=0$.

Consider the subspace $\tilde \gr\coloneqq \langle e_1,\ldots,e_4\rangle\subset \gr=\langle e_1,\ldots,e_4,e_7\rangle$. 
Then there exist $\alpha_k\in\Lambda^2\tilde \gr^*$ and $\theta_k\in\tilde\gr^*$, $k=5,6$, such that the structure equations of $\gn$ with respect to the basis $\{e^1,\ldots,e^7\}$ of $\gn^*$ are the following
\begin{equation}\label{str2}
\begin{dcases}
\d e^k=0,~k=1,2,3,4,7\\
\d e^k = \alpha_k + \theta_k \W e^7,~k=5, 6.
\end{dcases}
\end{equation}
We thus have $e_7\lrcorner\, \d e^k=-\theta_k,$  so our aim is to show that $\theta_k=0$ for $k=5,\ 6$.

We orient the subspace $\tilde \gr\subset\gn$ by the volume form $e^{1234}$. 
The space $\Lambda^2\tilde\gr^*$ can be decomposed into the orthogonal direct sum of the subspaces 
of self-dual forms $\Lambda^2_{\sst+}\tilde\gr^*\coloneqq \{\sigma\in\Lambda^2\tilde\gr^* \st *\sigma=\sigma \}$ 
and anti-self-dual forms $\Lambda^2_{\sst-}\tilde\gr^*\coloneqq \{\sigma\in\Lambda^2\tilde\gr^* \st *\sigma=-\sigma \}$, where $*$ denotes the Hodge operator on $\tilde\gr$. 
We choose the following $g_\f$-orthogonal basis of $\Lambda^2_{\sst+}\tilde\gr^*$
\begin{equation}\label{sigma}
\sigma_1 = e^{13}-e^{24},\quad \sigma_2 =- e^{14}-e^{23},\quad \sigma_3 = e^{12}+e^{34}. 
\end{equation}
Note that $|\sigma_k|=\sqrt{2}$, for $k=1,2,3$. By \eqref{G2adapted}, we have
\begin{equation}\label{g2pro}
\begin{split}
\f&=\sigma_1\W e^5+\sigma_2\W e^6+\sigma_3\W e^7+e^{567},\\
*_\f\f	&= e^{1234} + \sigma_1 \W e^{67} +\sigma_2 \W e^{75}+\sigma_3 \W e^{56}.
\end{split}
\end{equation}
Now, using \eqref{str2}, we obtain
\[
\begin{split}
\d*_\f\f	&=  \sigma_1 \W \d (e^{67}) +\sigma_2 \W \d(e^{75}) +\sigma_3 \W \d(e^{56}) \\
		&= (\sigma_1\W\alpha_6- \sigma_2 \W \alpha_5) \W e^7 +\sigma_3\W\alpha_5 \W e^6 -\sigma_3 \W \theta_5 \W e^{67} \\
		&\quad -\sigma_3 \W \alpha_6 \W e^5 +\sigma_3 \W \theta_6 \W e^{57}. 
\end{split}
\]
Therefore, $\f$ is coclosed if and only if the following equations on $\tilde\gr$ hold: 
\begin{equation}\label{n21}
\sigma_1\W\alpha_6- \sigma_2 \W \alpha_5 = 0 = \sigma_3\W\alpha_5 = \sigma_3 \W \alpha_6,
\end{equation}
\begin{equation} \label{n22}
\sigma_3\W\theta_5 = 0 = \sigma_3\W\theta_6.
\end{equation}

As $\sigma_3$ is a non-degenerate 2-form on $\tilde\gr$, the equations \eqref{n22} imply that $\theta_5=\theta_6=0$ as claimed.
\end{proof}

\begin{remark}
Recall that $\gn_{7,2,A}$ and $\gn_{7,2,B}$ are the only indecomposable $2$-step nilpotent Lie algebras of dimension 7 having $\dim(\mn')=2$.
Proposition \ref{decomp} thus provides an alternative proof of the fact that $\gn_{7,2,A}$ and $\gn_{7,2,B}$ do not admit coclosed $\G_2$-structures.  
This result was already proved in \cite[Thm.~5.1]{BFF} by a case by case analysis using the classification of 7-dimensional $2$-step nilpotent Lie algebras.
\end{remark}

\begin{corollary} \label{pro:n2cocl}  
Let $\mn$ be a $7$-dimensional $2$-step nilpotent Lie algebra with $\dim (\mn')=2$, 
let $\{e^1, \ldots, e^7\}$ be a basis of $\gn^*$ for which the structure equations of $\gn$ are given by \eqref{str2} and consider the metric $g$ making this frame orthonormal. Endow the $4$-dimensional subspace 
$\tilde{\gr}=\langle e_1,e_2,e_3,e_4\rangle\subset\gn$ with the metric $g_{\tilde{\gr}}$ and the orientation $e^{1234}$. 
Then, the $\G_2$-structure $\f$ induced by $\{e^1, \ldots, e^7\}$ is coclosed if and only if $\theta_5=\theta_6=0$ and there exist $a,b,c\in\R$ such that the self-dual parts $\alpha_5^+$, $\alpha_6^+$ 
of the forms $\alpha_5, \alpha_6\in\Lambda^2\tilde{\gr}^*$  satisfy
\begin{equation}\label{n2coclosed}
\begin{dcases}
\alpha_5^+ = a\,\sigma_1 + b\,\sigma_2, \\
\alpha_6^+ = b\,\sigma_1 +c\, \sigma_2,
\end{dcases}
\end{equation}
where $\sigma_1,\sigma_2$ are defined in \eqref{sigma}.
In addition, $\varphi$ is purely coclosed if and only if $a+c=0$.
\end{corollary}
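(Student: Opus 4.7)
The coclosed case is essentially a direct consequence of the computation already carried out in the proof of Proposition~\ref{decomp}, which shows that $\d*_\varphi\varphi=0$ is equivalent to the equations \eqref{n21} and \eqref{n22} on $\tilde\gr$. The non-degeneracy of $\sigma_3$ applied to \eqref{n22} immediately forces $\theta_5=\theta_6=0$, so the remaining task is to rewrite the three identities in \eqref{n21} as the stated conditions on the self-dual parts $\alpha_k^+$.

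The plan for this is to use two elementary facts about four-dimensional self-dual/anti-self-dual calculus: the wedge pairing on $\Lambda^2\tilde\gr^*$ vanishes on $\Lambda^2_{\sst+}\tilde\gr^*\times\Lambda^2_{\sst-}\tilde\gr^*$, and a direct check from \eqref{sigma} gives $\sigma_i\wedge\sigma_j=2\delta_{ij}\,e^{1234}$. Hence $\sigma_i\wedge\alpha_k=\sigma_i\wedge\alpha_k^+$ for $i=1,2,3$ and $k=5,6$. Writing $\alpha_k^+=\sum_{j=1}^{3}\lambda_{kj}\sigma_j$, the identities $\sigma_3\wedge\alpha_5=\sigma_3\wedge\alpha_6=0$ force $\lambda_{53}=\lambda_{63}=0$, while $\sigma_1\wedge\alpha_6=\sigma_2\wedge\alpha_5$ reduces to $\lambda_{61}=\lambda_{52}$. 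Setting $a\coloneqq\lambda_{51}$, $b\coloneqq\lambda_{52}=\lambda_{61}$, and $c\coloneqq\lambda_{62}$ then yields exactly \eqref{n2coclosed}.

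For the purely coclosed refinement, I still need to impose $\d\varphi\wedge\varphi=0$. Using \eqref{g2pro}, the structure equations \eqref{str2} with $\theta_5=\theta_6=0$, and the Leibniz rule, the differential $\d\varphi$ can be written as
\[
\d\varphi=\sigma_1\wedge\alpha_5+\sigma_2\wedge\alpha_6+\alpha_5\wedge e^{67}-\alpha_6\wedge e^{57},
\]
and then $\d\varphi\wedge\varphi$ is expanded against $\varphi=\sigma_1\wedge e^5+\sigma_2\wedge e^6+\sigma_3\wedge e^7+e^{567}$. Most terms vanish because they contain a repeated covector in $\{e^5,e^6,e^7\}$, and the surviving ones are multiples of $e^{1234567}$ extracted via $\sigma_1\wedge\alpha_5^+=2a\,e^{1234}$ and $\sigma_2\wedge\alpha_6^+=2c\,e^{1234}$. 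A short sign bookkeeping gives
\[
\d\varphi\wedge\varphi=4(a+c)\,e^{1234567},
\]
showing that $\varphi$ is purely coclosed if and only if $a+c=0$.

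The argument is short and almost entirely algebraic. The only mild obstacle is keeping track of signs in the wedge products on $\Lambda^\bullet\langle e^5,e^6,e^7\rangle$ in the final computation of $\d\varphi\wedge\varphi$, but the self-dual/anti-self-dual orthogonality preemptively eliminates the $\alpha_k^-$ parts, so only a handful of terms actually need to be evaluated.
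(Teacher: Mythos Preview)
Your proposal is correct and follows essentially the same approach as the paper: you reduce the coclosedness to equations \eqref{n21}--\eqref{n22} from the proof of Proposition~\ref{decomp}, translate \eqref{n21} into conditions on the $\sigma_i$-coefficients of $\alpha_k^+$ via the self-dual/anti-self-dual wedge orthogonality, and then compute $\d\varphi\wedge\varphi$ directly to extract the $a+c=0$ condition. The only cosmetic difference is that the paper phrases the constraints using the inner product $g_\varphi(\sigma_i,\alpha_k)$ rather than the wedge pairing $\sigma_i\wedge\sigma_j=2\delta_{ij}e^{1234}$, but these are equivalent formulations and lead to the same identity $\d\varphi\wedge\varphi=2(\sigma_1\wedge\alpha_5+\sigma_2\wedge\alpha_6)\wedge e^{567}=4(a+c)\,e^{1234567}$.
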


\begin{proof} 
Let $\{e^1,\ldots,e^7\}$ be a basis of $\gn^*$  for which the structure equations are given by \eqref{str2}, and let $g$ denote the metric making this basis orthonormal.  
Then, the $\G_2$-structure $\f$ induced by $\{e^1, \ldots, e^7\}$ satisfies \eqref{g2pro}, where $\{\sigma_1,\sigma_2,\sigma_3\}$ is the basis of self-dual forms on $\tilde\mr$ defined in \eqref{sigma}.

From the proof of Proposition \ref{decomp}, we have that $\f$ is coclosed if and only if \eqref{n21} and \eqref{n22} hold. The latter is equivalent to the vanishing of $\theta_5$ and $\theta_6$.
As $\{\sigma_1,\sigma_2,\sigma_3\}$ is an orthogonal basis of  $\Lambda^2_{\sst+}\tilde\gr^*$, \eqref{n21} imposes constraints on the self-dual part $\alpha_k^+$ of $\alpha_k$, $k=5,6$. 
More precisely, the equations in \eqref{n21} hold if and only if the components of $\alpha^+_5$ and $\alpha^+_6$ along $\sigma_3$ vanish and $g_\f(\alpha^+_5,\sigma_2) =g_\f(\alpha^+_6,\sigma_1)$, that is, 
if and only if there exist some real numbers $a,b,c,$ such that  \eqref{n2coclosed} is verified.

When $\f$ is purely coclosed, there is an additional constraint in the system \eqref{n2coclosed}. In detail, we have 
\[
\begin{split}
\d\f\W\f	&= \left(\sigma_1\W\alpha_5+\sigma_2\W\alpha_6+
\alpha_5 \W e^{67}-\alpha_6\W e^{57}\right) \W \f \\
		&= 2\left(\sigma_1\W\alpha_5+\sigma_2\W\alpha_6\right)\W e^{567}. 
\end{split}
\]
Thus, $\d\f\W\f=0$ if and only if $g_\f(\sigma_1,\alpha_5) +g_\f(\sigma_2,\alpha_6)=0$, namely if and only if $a+c=0$ in \eqref{n2coclosed}.
\end{proof}

Using Corollary \ref{pro:n2cocl}, we can show the following result which constitutes Theorem \ref{CCG2THM} for the case $\dim(\gn')=2$, and part (\ref{thii}) of Theorem \ref{th}.

\begin{proposition}\label{dim2} 
Let $(\gn,g)$ be a $7$-dimensional $2$-step nilpotent metric Lie algebra  with $\dim(\gn')=2$. 
\begin{enumerate}[$1)$]
\item\label{dim2i} There exists a coclosed $\G_2$-structure on $\gn$ inducing the metric $g$ if and only if $\ga\ne 0$. 
\item\label{dim2ii} Furthermore, there exists a purely coclosed $\G_2$-structure on $\gn$ inducing the metric $g$ if and only if 
there exists an oriented $4$-dimensional subspace $\tilde\gr\subset \gr$ with $\d(\mn')^*\subset \Lambda^2\tilde\mr^*$ such that for every orthonormal basis $\{\zeta_1,\zeta_2\}$ of $(\mn')^*$, 
the self-dual components $\d\zeta_1^+,\ \d\zeta_2^+\in \Lambda^2_{\sst+}\tilde\gr^*$ of $\d\zeta_1,\ \d\zeta_2\in \Lambda^2\tilde \gr^*$ are orthogonal and have equal norms.
\end{enumerate}
\end{proposition}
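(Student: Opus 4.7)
The plan is to apply Corollary \ref{pro:n2cocl}, which reduces both existence questions to finding orthonormal bases of $\mn'$ and of an oriented 4-dimensional subspace $\tilde\gr\subset\gr$ for which the self-dual components of the differentials of a basis of $(\mn')^*$ take the shape prescribed by \eqref{n2coclosed}. The central object throughout will be the linear map $F\colon \mn'\to\Lambda^2\tilde\gr^*$, $F(z)=\d z^\flat$, together with its self-dual component $F^+$.

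For part (\ref{dim2i}), direction ($\Rightarrow$) is immediate from Proposition \ref{decomp}: the vector $\f(\cdot,z_1,z_2)^\sharp$ produced there is a unit element of $\ga$. Conversely, pick a unit vector $x\in\ga$ and set $\tilde\gr=\langle x\rangle^\perp\cap\gr$; the identity $x\lrcorner\,\d z^\flat=0$, equivalent to $x\in\ga$, forces $\d z^\flat\in\Lambda^2\tilde\gr^*$ for all $z\in\mn'$, so in a basis of the form \eqref{str2} with $e_7=x$ one has $\theta_5=\theta_6=0$. Fix an orientation on $\tilde\gr$. Since the action of $\SO(4)$ on the 3-dimensional space $\Lambda^2_{\sst+}\tilde\gr^*$ is surjective onto $\SO(3)$, one can choose $\{e_1,\dots,e_4\}$ positively oriented such that $\sigma_3$ is orthogonal to the image of $F^+$; then the $2\times 2$ matrix $M$ expressing $F^+$ in the bases $\{z_1,z_2\}$ and $\{\sigma_1,\sigma_2\}$ is well-defined. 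A polar decomposition $M=OP$ with $O\in\On(2)$ and $P$ symmetric then shows that, after further changing the basis of $\mn'$ by $O^T\in\On(2)$, the matrix of $F^+$ becomes the symmetric matrix $P$; this matches \eqref{n2coclosed}, and Corollary \ref{pro:n2cocl} produces the desired coclosed $\G_2$-structure.

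For part (\ref{dim2ii}), direction ($\Rightarrow$) uses Proposition \ref{decomp} to obtain an adapted basis with $e_7\in\ga$ and $\tilde\gr=\langle e_1,\dots,e_4\rangle$ oriented by $e^{1234}$. Corollary \ref{pro:n2cocl} with $a+c=0$ gives $\alpha_5^+=a\sigma_1+b\sigma_2$ and $\alpha_6^+=b\sigma_1-a\sigma_2$, and an elementary computation using $|\sigma_i|^2=2$ and $\sigma_1\perp\sigma_2$ yields $g(\alpha_5^+,\alpha_6^+)=0$ and $|\alpha_5^+|^2=|\alpha_6^+|^2=2(a^2+b^2)$, so (b) holds on the basis $\{e_5,e_6\}$. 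Invariance of these identities under $\On(2)$ changes of the basis of $\mn'$, which transforms the Gram matrix of $F^+$ as $C^T(2(a^2+b^2)\Id)C=2(a^2+b^2)\Id$, upgrades (b) to every orthonormal basis of $(\mn')^*$.

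The converse of part (\ref{dim2ii}) is the hard step. Given (a) and (b), the unit vector $x$ spanning $\gr\cap\tilde\gr^\perp$ satisfies $x\in\ga$ by the same kernel argument as above. Condition (b) is equivalent to the conformality of $F^+$, i.e.~$(F^+)^*F^+=\lambda^2\Id_{\mn'}$ for some $\lambda\ge 0$. If $F^+=0$, any orthonormal bases satisfy \eqref{n2coclosed} trivially with $a=b=c=0$. Otherwise $F^+$ has rank $2$; rotating $\{e_1,\dots,e_4\}$ so that $\sigma_3\perp F^+(\mn')$, the matrix $M$ satisfies $MM^T=(\lambda^2/2)\,\Id$, hence $M=\sqrt{\lambda^2/2}\,R$ for some $R\in\On(2)$. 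The key obstacle is that the residual $\SO(2)\times\SO(2)$ freedom preserves $\det M$ and therefore cannot make $M$ symmetric traceless when $R\in\SO(2)$. The remedy is that Corollary \ref{pro:n2cocl} requires only an orthonormal basis of $\mn'$, not a positively oriented one, so replacing $z_2$ by $-z_2$ flips the sign of $\det M$ and places $R$ in $\On(2)\setminus\SO(2)$. Every such $R$ is a reflection, and reflections in $\On(2)$ are precisely the symmetric traceless orthogonal matrices, so $M$ automatically takes the form required by \eqref{n2coclosed} with $a+c=0$. Corollary \ref{pro:n2cocl} then yields the desired purely coclosed $\G_2$-structure inducing $g$.
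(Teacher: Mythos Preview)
Your proof is correct and follows essentially the same route as the paper: Proposition~\ref{decomp} for the forward implications and Corollary~\ref{pro:n2cocl} for the converses, with a polar-decomposition argument to achieve the symmetric form~\eqref{n2coclosed}. The only tactical difference is in the converse of~\ref{dim2ii}): the paper invokes the transitivity of $\SO(4)$ on ordered pairs of orthogonal self-dual $2$-forms of fixed length to place $\alpha_5^+,\alpha_6^+$ directly along $\sigma_1,-\sigma_2$ (giving $b=0$, $a=-c$), whereas you first align $\sigma_3$, encode $F^+$ as a multiple of an element of $\On(2)$, and then use that reflections in $\On(2)$ are exactly the symmetric traceless orthogonal matrices, fixing the determinant sign by an $\On(2)$-flip on $\mn'$. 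Two minor slips that do not affect validity: after ``changing the basis of $\mn'$ by $O^T$'' the matrix becomes the symmetric $OPO^T$ rather than $P$ itself (unless you use the right polar decomposition $M=PO$), and the Gram condition gives $M^TM=(\lambda^2/2)\Id$ rather than $MM^T$, though for $2\times 2$ matrices these are equivalent.
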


\begin{proof}
We first prove the direct implication in both cases. Assume that $(\gn,g)$, with $\dim(\gn')=2$, has a coclosed $\G_2$-structure $\f$ such that $g_\f=g$. 
By the transitivity of G$_2$ on orthonormal pairs of vectors in $\gn$, there exists a basis $\{e_1,\ldots,e_7\}$ of $\gn$ adapted to $\f$ 
with $\gn' = \langle e_5,e_6 \rangle$. By Proposition \ref{decomp}, $x=e_7$ belongs to $\ga\subset\mr$, so $\ga\ne0$.
Moreover, the orthogonal complement $\tilde\gr$ of $x$ in $\gr$ is $\tilde\gr\coloneqq\langle e_1,\ldots,e_4\rangle$. 

Let us consider the orientation of $\tilde\gr$ given by $e^{1234}$.
If $\f$ is purely coclosed, by Corollary \ref{pro:n2cocl} there are some real numbers $a,b$ such that the self-dual parts $\alpha_k^+\in\Lambda^2_{\sst+}\tilde\gr^*$ of $\alpha_k\coloneqq\d e^k$, for $k=5,6,$ 
satisfy \eqref{n2coclosed} with $c=-a$ (recall that $\sigma_1,~\sigma_2,~\sigma_3$ are given by \eqref{sigma}). 
From this system it clearly follows that $\alpha_5^+$ and $\alpha_6^+$ are orthogonal and have the same length. 

Every other orthonormal coframe $\{\zeta_1, \zeta_2\}$ of $(\mn')^*$ differs from $\{e_5,\e_6\}$ by the action of 
an orthogonal matrix in ${\rm O}(2)$, which also describes the transformation taking the pair 
$\{\d \alpha_5^+, \d\alpha_6^+\}$ to $\{\d \zeta_1^+, \d\zeta_2^+\}$.
Therefore, $\{\d \zeta_1^+, \d\zeta_2^+\}$ are orthogonal and have equal norms.

\smallskip
We now prove the converse statements. 
\smallskip

\noindent
\ref{dim2i}) 
Assume that $\ga\ne 0$, let $ e_7\in\ga\subset \mr$ be a unit vector, and choose a $g$-orthonormal basis $\{e_5,e_6\}$ of $\gn'$. 
Consider the metric induced by $g$, fix some orientation on the orthogonal complement $\tilde\gr$ of $e_7$ in $\gr$, 
and let $\alpha_k\coloneqq\d e^k\in\Lambda^2\tilde\gr^*$, $k=5,6$. 

Let $\mathcal{P}\subset \Lambda^2_{\sst+}\tilde\gr^*$ be a plane containing the self-dual components $\alpha_5^+,\alpha_6^+$ of $\alpha_5$ and $\alpha_6$. 
Using the polar decomposition, one can find an orthonormal basis in $\mathcal{P}$ 
with respect to which the matrix of $\alpha_5^+,\alpha_6^+$ is symmetric. 
Consequently, one can find two orthogonal elements $\{\s_1,\s_2\}$ in $\Lambda^2_{\sst+}\tilde\gr^*$ with $|\s_1|=|\s_2|=\sqrt{2}$ such that the self-dual forms $\alpha_5^+,\alpha_6^+$ can be written as
\[
\begin{dcases}
\alpha_5^+ = a\,\sigma_1 + b\,\sigma_2, \\
\alpha_6^+ = b\,\sigma_1 +c \, \sigma_2,
\end{dcases}
\]
for some real numbers $a,b,c.$ 
Since $\SO(4)$ acts transitively on pairs of orthogonal forms of length $\sqrt2$ in $\Lambda^2_{\sst+}\tilde \gr^*$, we can always find an oriented orthonormal basis $\{e_1,\ldots,e_4\}$ of $\tilde\gr$ such that 
${\s}_1 = e^{13}-e^{24}$ and ${\s}_2= -e^{14}-e^{23}$. 
Then the G$_2$-structure induced by the orthonormal basis $\{e_1, \ldots,e_7\}$ of $(\mn,g)$ is coclosed by Corollary \ref{pro:n2cocl}.

\smallskip
\noindent
\ref{dim2ii}) 
Assume that there exists an oriented $4$-dimensional subspace $\tilde\gr\subset \gr$ with $\d(\mn')^*\subset \Lambda^2\tilde\mr^*$ and such that for every orthonormal basis $\{\zeta_1,\zeta_2\}$ of $(\mn')^*$, 
the self-dual components $\d\zeta_1^+,\ \d\zeta_2^+\in \Lambda^2_{\sst+}\tilde\gr^*$ of $\d\zeta_1,\ \d\zeta_2\in \Lambda^2\tilde \gr^*$ are orthogonal and have equal norms, say $\rho$.

We define $e_7$ as a unit vector in $\mr$ orthogonal to $\tilde\mr$, $e_5,e_6$ as the metric duals of $\zeta_1,\zeta_2$, and $\alpha_5\coloneqq\d \zeta_1$,  $\alpha_6\coloneqq\d \zeta_2$.
Like before, the transitivity of $\SO(4)$ on pairs of orthogonal forms of fixed length in $\Lambda^2_{\sst+}\tilde \gr^*$ shows that there exists an oriented orthonormal basis 
$\{e_1,\ldots,e_4\}$ of $\tilde\gr$ such that $\alpha_5^+= \frac{\rho}{\sqrt2}(e^{13}-e^{24})$ and $\alpha_6^+ = \frac{\rho}{\sqrt2}(e^{14}+e^{23})$. 
Then, the system \eqref{n2coclosed} holds for $b=0$ and $a=-c=\frac{\rho}{\sqrt2}$. 
Moreover, the structure equations \eqref{str2} hold for $\theta_5=\theta_6=0$ since $\alpha_5,\alpha_6\in \Lambda^2\tilde\mr^*$. 
Therefore, the G$_2$-structure induced by the orthonormal basis $\{e_1,\ldots,e_7\}$ of $\gn$ is purely coclosed by Corollary \ref{pro:n2cocl}.
\end{proof}

In Section \ref{metricn2} we will use the criterion above together with the recent classification of 2-step nilpotent metric Lie algebras of dimension 6 with 2-dimensional derived algebra \cite{DiSc,ReVi}, 
in order to study which of them admit compatible purely coclosed G$_2$-structures.


\subsection{Case 3: $\dim(\gn')=3$}\label{subsec:case3} 
To conclude the discussion, we have to investigate the case when $\gn$ is a 7-dimensional 2-step nilpotent Lie algebra with $\dim(\gn')=3$. 
We begin by studying $\G_2$-structures calibrating the derived algebra. 
Then, in Lemma \ref{lcal}, we prove that we can always restrict our study of (purely) coclosed $\G_2$-structures inducing a given metric to those calibrating $\mn'$.

By definition, a 3-dimensional subspace of $\gn$ is {\em calibrated} by $\f$ if and only if there exists an orthonormal basis $\{z_1,z_2,z_3\}$ of it such that $\f(z_1,z_2,z_3)=\pm 1$.  
For more information on the theory of calibrations, we refer the reader to \cite{HaLa}. 

We start by characterizing (purely) coclosed G$_2$-structures calibrating $\gn'$ in terms of adapted bases. 
Consider a 7-dimensional 2-step nilpotent metric Lie algebra $(\gn,g)$  with $\dim (\gn')=3$.
Let $\{e_1,\ldots,e_7\}$ be a $g$-orthonormal basis of $\gn$ such that $\gr = \langle e_1,e_2,e_3,e_4\rangle$ and $\gn'=\langle e_5,e_6,e_7\rangle$.  
The structure equations of $\gn$ are given by 
\[
\begin{dcases}
\d e^k=0,\qquad k=1,2,3,4,\\
\d e^k \eqqcolon \alpha_{k-4},\quad k=5,6,7, 
\end{dcases}
\]
where $\alpha_1,\alpha_2,\alpha_3\in \Lambda^2\gr^*$. 
With respect to the orientation of $\gr$ induced by $e^{1234}$, consider again the basis of $\Lambda^2_{\sst+}\gr^*$:
\[
\sigma_1 = e^{13}-e^{24},\quad \sigma_2 = -(e^{14}+e^{23}),\quad \sigma_3 =e^{12}+e^{34}.
\]

\begin{lemma} \label{lsa}
The $\mathrm{G}_2$-structure $\f$ induced by the basis $\{e_1,\ldots,e_7\}$ is coclosed if and only if 
\begin{equation}\label{sa1}
g(\sigma_i,\alpha_j) = g(\sigma_j,\alpha_i),\quad i,j\in\{1,2,3\},
\end{equation}
and it is purely coclosed if and only if the following additional
condition holds
\begin{equation}\label{sa2}
\sum_{i=1}^3g(\sigma_i,\alpha_i)=0.
\end{equation}
\end{lemma}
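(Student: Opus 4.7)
The plan is to compute $\d*_\f\f$ and $\d\f\wedge\f$ directly in the adapted basis, exploiting the fact that $\sigma_1,\sigma_2,\sigma_3$ generate the self-dual forms on $\gr$ and that all $\alpha_i$ lie in $\Lambda^2\gr^*$.

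The first step is to rewrite the model G$_2$-structure \eqref{G2adapted} in a form adapted to the splitting $\gn=\gr\oplus\gn'$. Regrouping the monomials in \eqref{G2adapted} yields
\[
\f=\sigma_1\wedge e^5+\sigma_2\wedge e^6+\sigma_3\wedge e^7+e^{567},\qquad
*_\f\f=e^{1234}+\sigma_1\wedge e^{67}+\sigma_2\wedge e^{75}+\sigma_3\wedge e^{56}.
\]
Since $\d e^i=0$ for $i=1,\ldots,4$ and $\d e^{k+4}=\alpha_k\in\Lambda^2\gr^*$, the forms $\sigma_i$ and $e^{1234}$ are closed, and
\[
\d e^{67}=\alpha_2\wedge e^7-\alpha_3\wedge e^6,\quad \d e^{75}=\alpha_3\wedge e^5-\alpha_1\wedge e^7,\quad \d e^{56}=\alpha_1\wedge e^6-\alpha_2\wedge e^5.
\]

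For the coclosed condition, I plug these into $\d*_\f\f$ and collect by $e^5,e^6,e^7$. The $e^7$-coefficient is $\sigma_1\wedge\alpha_2-\sigma_2\wedge\alpha_1$, the $e^5$-coefficient is $\sigma_2\wedge\alpha_3-\sigma_3\wedge\alpha_2$, and the $e^6$-coefficient is $\sigma_3\wedge\alpha_1-\sigma_1\wedge\alpha_3$. Each term $\sigma_i\wedge\alpha_j$ is a top-degree form on the oriented $4$-space $\gr$, and the self-duality of $\sigma_i$ together with the identity $\sigma_i\wedge{*\alpha_j}=g(\sigma_i,\alpha_j)\vol_\gr$ gives $\sigma_i\wedge\alpha_j=g(\sigma_i,\alpha_j)\vol_\gr$ (only the self-dual part of $\alpha_j$ pairs nontrivially with $\sigma_i$). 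Vanishing of the three coefficients is therefore exactly the three nontrivial instances of \eqref{sa1}; the diagonal cases $i=j$ being tautologies, this proves the first equivalence.

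For the purely coclosed condition, I compute $\d\f=\sigma_1\wedge\alpha_1+\sigma_2\wedge\alpha_2+\sigma_3\wedge\alpha_3+\alpha_1\wedge e^{67}-\alpha_2\wedge e^{57}+\alpha_3\wedge e^{56}$, using $\d\sigma_i=0$ and the Leibniz formula applied to $e^{567}$. Wedging with $\f$, I look only at the $e^{1234567}$-coefficient. The terms $\sigma_i\wedge\alpha_i\wedge e^{567}$ contribute $g(\sigma_i,\alpha_i)\vol_\gr\wedge e^{567}$. The mixed terms $\alpha_i\wedge e^{jk}\wedge\f$ have a nonzero $e^{567}$-piece only when wedged with $\sigma_i\wedge e^{4+i}$, since this is the unique choice completing the $e^{567}$-factor; these contributions again give $g(\sigma_i,\alpha_i)\vol_\gr\wedge e^{567}$ (the signs from reordering $e^{jk}\wedge e^{4+i}$ to $e^{567}$ are absorbed by the signs of the three mixed terms in $\d\f$). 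Adding both families produces $\d\f\wedge\f=2\sum_ig(\sigma_i,\alpha_i)\,\vol_\gr\wedge e^{567}$, so $\d\f\wedge\f=0$ is equivalent to \eqref{sa2}.

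The whole argument is just careful bookkeeping; the main obstacle (and the only place to be cautious) is sign management when reordering $e^{jk}\wedge e^l$ into $e^{567}$ and when identifying $\sigma_i\wedge\alpha_j$ as $g(\sigma_i,\alpha_j)\vol_\gr$. No additional geometric input beyond the self-duality of the $\sigma_i$ and the structure equations is required.
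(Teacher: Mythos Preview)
Your proof is correct and follows essentially the same approach as the paper: both rewrite $\f$ and $*_\f\f$ in terms of the self-dual basis $\sigma_1,\sigma_2,\sigma_3$, compute $\d*_\f\f$ and $\f\wedge\d\f$ directly, and use the identity $\sigma_i\wedge\alpha_j=g(\sigma_i,\alpha_j)\vol_\gr$ coming from self-duality. The only cosmetic difference is that you spell out $\d\f$ in full and track the mixed terms $\alpha_i\wedge e^{jk}$ explicitly when forming $\d\f\wedge\f$, whereas the paper condenses this into the single line $\f\wedge\d\f=2(\sigma_1\wedge\alpha_1+\sigma_2\wedge\alpha_2+\sigma_3\wedge\alpha_3)\wedge e^{567}$.
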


\begin{proof}
Consider the G$_2$-structure $\f$ on $\gn$ induced by $\{e_1,\ldots,e_7\}$.
Then, $\f$ is given by \eqref{G2adapted}, $g_\f=g$ and $\gn'$ is calibrated by $\f$. 
Moreover, we can write 
\[
*_\f\f = \frac12\,\sigma_3^2  + \sigma_1 \W e^{67} +\sigma_2 \W e^{75} + \sigma_3 \W e^{56},
\]
so that
\[
\d*_\f\f = (\sigma_2\W\alpha_3 - \sigma_3\W\alpha_2)\W e^5 + (\sigma_3\W\alpha_1 - \sigma_1\W\alpha_3)\W e^6 + (\sigma_1\W \alpha_2- \sigma_2\W\alpha_1)\W e^7.
\]
Thus $\f$ is coclosed if and only if 
\[
\sigma_i\wedge\alpha_j =\sigma_j\wedge\alpha_i ,\quad i,j\in\{1,2,3\},
\]
which, taking into account that $\sigma_i$ are self-dual forms on $\gr$, is equivalent to \eqref{sa1}.

Moreover, since $\f=\sigma_1\wedge e^5+\sigma_2\wedge e^6+\sigma_3\wedge e^7+e^{567}$, we readily compute
\begin{equation}\label{sd}
\f\wedge\d \f=2(\sigma_1\wedge\d e^5+\sigma_2\wedge\d e^6+\sigma_3\wedge\d e^7)\wedge e^{567},
\end{equation}
so the extra condition for $\f$ being purely coclosed is $\sum_{i=1}^3\sigma_i\wedge\alpha_i=0$, which is equivalent to \eqref{sa1} since $\sigma_i$ are self-dual forms on $\gr$.
\end{proof}

We will now show that the whenever $\gn$ has a (purely) coclosed $\G_2$-structure, it also carries a (purely) coclosed $\G_2$-structure inducing the same metric and calibrating $\gn'$.

\begin{lemma} \label{lcal}
Let $(\gn,g)$ be a $7$-dimensional $2$-step nilpotent metric Lie algebra with $\dim (\gn')=3$. 
If there exists a (purely) coclosed $\G_2$-structure on $\gn$ inducing the metric $g$, then there exists a (purely) coclosed $\G_2$-structure on $\gn$ inducing the metric $g$ and calibrating $\gn'$.
\end{lemma}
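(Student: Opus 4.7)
My plan is to combine Lemma \ref{lsa} with a matrix analysis of orthonormal basis changes.

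By Lemma \ref{lsa}, every $\G_2$-structure that calibrates $\gn'$ and induces $g$ arises from some orthonormal basis $\{e_1,\ldots,e_7\}$ of $(\gn,g)$ with $\gr=\langle e_1,\ldots,e_4\rangle$ and $\gn'=\langle e_5,e_6,e_7\rangle$, and it is coclosed (resp.\ purely coclosed) if and only if the $3\times 3$ matrix $B_{ij}\coloneqq g(\sigma_i,\alpha_j)$ is symmetric (resp.\ symmetric and traceless), where $\alpha_j=\d e^{4+j}\in\Lambda^2\gr^*$ and $\{\sigma_1,\sigma_2,\sigma_3\}$ is the standard self-dual basis of $\Lambda^2_{\sst+}\gr^*$. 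Hence the task reduces to finding orthonormal bases of $\gr$ and $\gn'$ for which the associated $B$ satisfies the required symmetry (and, in the purely coclosed case, trace) conditions.

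Changes of orthonormal basis in $\gr$ by $\SO(4)$ induce an $\SO(3)$-transformation $P$ of $(\sigma_i)$, via the standard action of $\SO(4)$ on $\Lambda^2_{\sst+}\gr^*$, while changes of orthonormal basis in $\gn'$ by $Q\in\SO(3)$ reshuffle $(\alpha_j)$ accordingly; the combined effect on $B$ is $B\mapsto P^T B Q$ with $(P,Q)\in\SO(3)\times\SO(3)$. For the coclosed case, I apply the singular value decomposition of $B$, adjusted by sign flips so that $P,Q\in\SO(3)$, to bring $B$ to diagonal form; a diagonal matrix is symmetric, so \eqref{sa1} holds in the new basis. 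This gives the coclosed case without invoking the hypothesis.

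For the purely coclosed case, the additional condition $\tr(P^T B Q)=0$ imposes a nontrivial algebraic constraint on the singular values of $B$, which is the main obstacle: it must be deduced from the existence of the hypothesized purely coclosed $\f$, which need not calibrate $\gn'$. My approach is to decompose $\f$ according to
\[
\Lambda^3\gn^* = \Lambda^3\gr^*\oplus\bigl(\Lambda^2\gr^*\otimes(\gn')^*\bigr)\oplus\bigl(\gr^*\otimes\Lambda^2(\gn')^*\bigr)\oplus\Lambda^3(\gn')^*,
\]
and expand $\d*_\f\f=0$ and $\d\f\W\f=0$, using that $\d$ vanishes on $\gr^*$ and sends $(\gn')^*$ into $\Lambda^2\gr^*$. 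The purely coclosed equations should then yield algebraic identities linking the components of $\f$ with the 2-forms $\alpha_j$ which, after suitable manipulation, force the singular-value relation required for $\tr(B)=0$ to be achievable within the $\SO(3)\times\SO(3)$ orbit. The technical crux is precisely the extraction of this singular-value identity from equations that a priori couple all four components of the decomposition above; an alternative route would be a continuous deformation argument through the purely coclosed locus in $\mathcal{G}^{-1}(g)$, which would require a connectedness or rigidity result of independent interest.
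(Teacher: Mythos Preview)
Your coclosed argument is correct and in fact proves more than the lemma asks: via the SVD you show that \emph{every} metric admits a coclosed $\G_2$-structure calibrating $\gn'$, independently of any hypothesis. This is the content of Proposition~\ref{p3}~\ref{p31}), which the paper proves by the same mechanism (polar decomposition in place of SVD).

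For the purely coclosed case, however, there is a genuine gap. You correctly identify that achieving a symmetric trace-free $P^TBQ$ is equivalent to a constraint on the singular values of $B$ --- exactly the relation $\tr^2(S)=2\tr(S^2)$ of Lemma~\ref{alg} --- and that this must be extracted from the hypothesized purely coclosed $\f$. But you do not carry out that extraction: the bigraded expansion you propose couples all four components of $\f$ with the $\alpha_j$, and you give no indication of how the single scalar constraint on singular values emerges from that system; the alternative deformation argument would require control of the purely coclosed locus in $\mathcal{G}^{-1}(g)$ that you do not have. As written, the purely coclosed half of the proposal is a plan, not a proof.

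The paper's argument bypasses the singular-value question entirely by an explicit construction. Starting from the given $\f$ (assumed not to calibrate $\gn'$), it chooses an adapted basis with $e_6,e_7\in\gn'$, then uses the $\SU(2)$-stabilizer of $(e_6,e_7)$ in $\G_2$ to arrange that the remaining generator of $\gn'$ has the form $\tilde e_5=\lambda e_4+\mu e_5$. Setting $\tilde e_4=\mu e_4-\lambda e_5$ and $\tilde e_i=e_i$ otherwise yields a new orthonormal basis whose induced $\G_2$-structure $\tilde\f$ calibrates $\gn'$. Writing out $\d*_\f\f=0$ in the $\tilde e$-coframe and contracting with pairs from $\tilde e_5,\tilde e_6,\tilde e_7$ (using $\lambda\ne 0$) forces $\tilde e^2\wedge\alpha_i=0$ for $i=1,2,3$; the remaining terms then reduce precisely to the symmetry condition~\eqref{sa1} for $\tilde\f$, so $\tilde\f$ is coclosed. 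For the purely coclosed case a direct computation, again exploiting $\tilde e^2\wedge\alpha_i=0$, gives the pointwise identity $\tilde\f\wedge\d\tilde\f=\f\wedge\d\f$, so the vanishing transfers at once.

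In summary, your matrix framework is the natural one for Proposition~\ref{p3}, but in the paper Lemma~\ref{lcal} is logically prior and is proved by constructing $\tilde\f$ explicitly from $\f$, rather than by deducing an invariant of $B$.
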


\begin{proof} Let $\f$ be a $\G_2$-structure on $\gn$ inducing the metric $g$. If $\gn'$ is calibrated by $\f$ there is nothing to show, so we can assume for the rest of the proof that $\gn'$ is not calibrated.

As G$_2$ acts transitively on ordered pairs of orthonormal vectors on $\bR^7$,  there exists a $g$-orthonormal basis $\{e_1,\ldots,e_7\}$ of $\gn$ adapted to $\f$ such that  $e_6,e_7\in\gn'$. 
We denote by $\tilde e_5$ a unit vector in $\gn'$ orthogonal to $e_6$ and $e_7$ (determined up to sign). 
The stabilizer of $e_6$, $e_7$ in G$_2$ also fixes $e_5$, and acts on $\gr$ as SU$(2)$. Using the transitivity of the  action of SU$(2)$ on spheres in $\R^4$, 
one can assume that the $\gr$-component of $\tilde e_5$ is proportional to $e_4$. 
Consequently, there exist $\l,\mu\in\bR$ with $\l^2+\mu^2=1$ such that $\tilde e_5=\lambda e_4+\mu e_5$. We denote $\tilde e_4\coloneqq\mu e_4-\lambda e_5$ and 
$\tilde e_i \coloneqq e_i$ for $i=1,2,3,6,7$. Then, $\gr=\langle \tilde{e}_1,\tilde{e}_2,\tilde{e}_3,\tilde{e}_4\rangle$, the basis $\{\tilde e_1,\ldots,\tilde e_7\}$ is also $g$-orthonormal 
and the G$_2$-structure $\tilde\f$ induced by it calibrates $\gn'=\langle\tilde{e}_5,\tilde{e}_6,\tilde{e}_7\rangle$. 

We will show that if $\f$ is (purely) coclosed, then $\tilde \f$ is (purely) coclosed too. Expressing 
\begin{equation}\label{eqe}e_4=\mu\tilde e_4+\l \tilde e_5,\qquad e_5=-\l \tilde e_4+\mu \tilde e_5
\end{equation} 
and using \eqref{G2adapted}, we have 
\begin{eqnarray*}
*_\f\f	&=&e^{1234}+e^{1256}+e^{3456} +e^{1367}+e^{1457}+e^{2357}-e^{2467}\\
	&=&\mu \tilde e^{1234}+\l \tilde e^{1235}-\l \tilde e^{1246}+\mu \tilde e^{1256}+\tilde e^{3456} +\tilde e^{1367}+\tilde e^{1457}\\
	&&-\l\tilde e^{2347}+\mu \tilde e^{2357}-\mu \tilde e^{2467}-\l \tilde e^{2567}.
\end{eqnarray*}
Denoting by $\alpha_i\coloneqq\d\tilde e^{i+4}$ for $i=1,2,3$ and using that $\d \tilde e^i=0$ for $i=1,2,3,4$, we get 
\begin{eqnarray*}
\d*_\f\f&=&-\l \tilde e^{123}\wedge\alpha_1+\l \tilde e^{124}\wedge\alpha_2+\mu \tilde e^{126}\wedge\alpha_1-\mu \tilde e^{125}\wedge\alpha_2+\tilde e^{346}\wedge\alpha_1-\tilde e^{345}\wedge\alpha_2\\
&& +\tilde e^{137}\wedge\alpha_2-\tilde e^{136}\wedge\alpha_3+\tilde e^{147}\wedge\alpha_1-\tilde e^{145}\wedge\alpha_3 +\l\tilde e^{234}\wedge\alpha_3 +\mu \tilde e^{237}\wedge\alpha_1\\
&&-\mu \tilde e^{235}\wedge\alpha_3-\mu \tilde e^{247}\wedge\alpha_2+\mu \tilde e^{246}\wedge\alpha_3+\l \tilde e^{267}\wedge\alpha_1-\l \tilde e^{257}\wedge\alpha_2+\l \tilde e^{256}\wedge\alpha_3.
\end{eqnarray*}

Suppose now that $\f$ is coclosed. Since $\gn'$ is not calibrated by $\f$, we have $\l\ne 0$.
Taking the interior product with $\tilde e_5$ and $\tilde e_6$ in the previous relation, i.e., $\tilde e_5\lrcorner \tilde e_6\lrcorner \d*_\f\f$, yields $\tilde e^2\wedge\alpha_3=0$. 
Similarly, taking the interior product with $\tilde e_6$ and $\tilde e_7$, or with $\tilde e_5$ and $\tilde e_7$, we get  $\tilde e^2\wedge\alpha_1=\tilde e^2\wedge\alpha_2=0$. 
The above relation thus simplifies to 
\begin{equation}\label{ecnc}
0=\tilde e^{346}\wedge\alpha_1-\tilde e^{345}\wedge\alpha_2 +\tilde e^{137}\wedge\alpha_2-\tilde e^{136}\wedge\alpha_3+\tilde e^{147}\wedge\alpha_1-\tilde e^{145}\wedge\alpha_3.
\end{equation}
Denoting as before 
\[
\sigma_1 = \tilde e^{13}- \tilde e^{24},\quad \sigma_2 = -( \tilde e^{14}+ \tilde e^{23}),\quad \sigma_3 = \tilde e^{12}+ \tilde e^{34}, 
\]
and using again that $e^2\wedge\alpha_i=0$ for $i=1,2,3$, we readily obtain that \eqref{ecnc} is equivalent to 
$g(\sigma_i,\alpha_j) = g(\sigma_j,\alpha_i)$ for all $i,j\in\{1,2,3\}$. By Lemma \ref{lsa}, $\tilde\f$ is thus coclosed.

Finally, we show that $\tilde \f\wedge\d\tilde \f=\f\wedge\d\f$. 
Since $\{\tilde{e}^5,\tilde{e}^6,\tilde{e}^7\}$ spans $\gn'$, \eqref{sd} gives
\begin{equation}\label{sd1}
\tilde \f\wedge\d\tilde \f=2(\sigma_1\wedge\alpha_1+\sigma_2\wedge\alpha_2+\sigma_3\wedge\alpha_3)\wedge \tilde e^{567}.
\end{equation}

The argument above shows that $\d(e^2\wedge\alpha_i)=-e^2\wedge\d\alpha_i=0$, for $i=1,2,3$. 
Using \eqref{eqe}, we obtain $\d e^4=\d(\mu\tilde e^4+\l \tilde e^5)=\l\alpha_1$ and $\d e^5=\d(-\l \tilde e^4+\mu \tilde e^5)=\mu\alpha_1$, so we can write
\begin{eqnarray*}
\f\wedge\d\f	&=&\f\wedge\d( e^{127}+e^{347}+e^{567} + e^{135}-e^{146}-e^{236}-e^{245})\\
			&=&(e^{347}+e^{567} + e^{135}-e^{146})\wedge\d(e^{347}+e^{567} + e^{135}-e^{146})\\
			&=&(e^{347}+e^{567} + e^{135}-e^{146})\wedge(-\l e^{37}\wedge\alpha_1+e^{34}\wedge\alpha_3+\mu e^{67}\wedge\alpha_1\\
			&&-e^{57}\wedge\alpha_2+e^{56}\wedge\alpha_3
				+ \mu e^{13}\wedge\alpha_1+\l e^{16}\wedge\alpha_1-e^{14}\wedge\alpha_2)\\
 			&=&2(\l e^{13467}\wedge\alpha_1+e^{34567}\wedge\alpha_3+\mu e^{13567}\wedge\alpha_1-e^{14567}\wedge\alpha_2).
\end{eqnarray*}
Since $\lambda e^4+\mu e^5=\tilde e^5$ and $e^4\wedge e^5=\tilde e^4\wedge\tilde e^5$, the above relation reads
\begin{eqnarray*}\f\wedge\d\f&=&2(\tilde e^{13567}\wedge\alpha_1-\tilde e^{14567}\alpha_2+\tilde e^{34567}\wedge\alpha_3)\\
&=&2(\tilde e^{13}\wedge\alpha_1-\tilde e^{14}\wedge\alpha_2+\tilde e^{34}\wedge\alpha_3)\wedge \tilde e^{567}\\
&=&2(\sigma_1\wedge\alpha_1+\sigma_2\wedge\alpha_2+\sigma_3\wedge\alpha_3)\wedge \tilde e^{567}.
\end{eqnarray*}
By \eqref{sd1} we thus have $\tilde \f\wedge\d\tilde \f=\f\wedge\d\f$. So, if  $\f$ is purely coclosed, then $\tilde\f$ is purely coclosed too.
\end{proof}

We are now ready to prove the remaining parts of the theorems stated in the Introduction. The next Proposition \ref{p3} gives the proof of Theorem \ref{CCG2THM} for the case $\dim(\gn')=3$, 
and part (\ref{thiii}) of Theorem \ref{th}. 

\begin{proposition}\label{p3}
Let $(\gn,g)$ be a $7$-dimensional $2$-step nilpotent metric Lie algebra with $\dim (\gn')=3$. 
\begin{enumerate}[$1)$]
\item\label{p31}
There exists a coclosed $\G_2$-structure on $\gn$ inducing the metric $g$. 
\item \label{p32}
Furthermore, there exists a purely coclosed $\G_2$-structure on $\gn$ inducing the metric $g$ if and only if for some orientation of the $4$-dimensional space $\gr$, 
and for every orthonormal basis $\{\zeta_1,\zeta_2,\zeta_3\}$ of $(\mn')^*$, the $3\times 3$ Gram matrix $S$ of the self-dual components of their differentials in $\Lambda^2_{\sst+}\gr^*$, 
defined by $S_{ij} \coloneqq g( \d\zeta_i^+,\d\zeta_j^+)$, satisfies $\tr^2(S)=2\tr(S^2)$. 
\end{enumerate}
\end{proposition}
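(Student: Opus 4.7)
The plan is to reduce to $\G_2$-structures calibrating $\gn'$ via Lemma \ref{lcal}, and to apply Lemma \ref{lsa} in an adapted basis with $\gn' = \langle e_5, e_6, e_7\rangle$. Once an orientation on $\gr$ is chosen, the rescaled forms $\tilde\sigma_i := \sigma_i/\sqrt{2}$ form an orthonormal basis of $\Lambda^2_{\sst+}\gr^*$, and the matrix $\tilde M$ with entries $\tilde M_{ij} := g(\tilde\sigma_i, \d e^{j+4})$ is exactly the matrix of the intrinsic linear map
\[
T^+ \colon (\gn')^* \longrightarrow \Lambda^2_{\sst+}\gr^*, \qquad \zeta \longmapsto (\d\zeta)^+,
\]
in the orthonormal bases $\{e^{j+4}\}$ of $(\gn')^*$ and $\{\tilde\sigma_i\}$ of $\Lambda^2_{\sst+}\gr^*$. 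With this dictionary, Lemma \ref{lsa} becomes: the induced $\G_2$-structure is coclosed iff $\tilde M$ is symmetric, and purely coclosed iff additionally $\tr \tilde M = 0$. Changes of orthonormal basis on the two sides give an $\SO(3) \times \On(3)$-action on $\tilde M$ by left/right multiplication; the $\SO(3)$-factor on the left comes from the surjection $\SO(4)\twoheadrightarrow\SO(\Lambda^2_{\sst+}\gr^*)$ induced by orientation-preserving orthonormal changes on $\gr$, and the full $\On(3)$-factor on the right reflects that every orthonormal basis of $(\gn')^*$ is admissible.

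For part \ref{p31}), the singular value decomposition of $T^+$ produces orthonormal bases in which $\tilde M = \diag(\mu_1, \mu_2, \mu_3)$ with $\mu_i \geq 0$. A diagonal matrix is symmetric, so Lemma \ref{lsa} shows that the $\G_2$-structure induced by the corresponding $g$-orthonormal basis of $\gn$ is coclosed.

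For part \ref{p32}), the first step is the identity $S = \tilde M^T \tilde M$, obtained by expanding $\alpha_j^+ = \sum_i \tilde M_{ij}\,\tilde\sigma_i$ and computing the pairings $g(\alpha_i^+, \alpha_j^+)$. Since a change of orthonormal basis of $(\gn')^*$ by $O \in \On(3)$ sends $S$ to $O S O^T$, both $\tr(S)$ and $\tr(S^2)$ are basis-independent, so the statement's ``for every orthonormal basis'' is cosmetic. For the forward implication, Lemma \ref{lcal} provides a purely coclosed $\G_2$-structure calibrating $\gn'$, in whose adapted basis $\tilde M$ is symmetric and traceless; letting $\nu_i$ denote its eigenvalues, so $\sum\nu_i = 0$, Newton's identities with $e_1(\nu)=0$ yield $\sum\nu_i^2 = -2 e_2$ and $\sum\nu_i^4 = 2 e_2^2$, hence $\tr(S) = -2 e_2$, $\tr(S^2) = 2 e_2^2$, and therefore $\tr^2(S) = 4 e_2^2 = 2\tr(S^2)$.

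For the converse in part \ref{p32}), I would fix the orientation of $\gr$ for which the condition holds. The eigenvalues $\mu_i^2 \geq 0$ of $S$ are then the squared singular values of $T^+$, and $(\sum\mu_i^2)^2 = 2\sum\mu_i^4$ is equivalent, via the factorization $2\sum\mu_i^4 - (\sum\mu_i^2)^2 = (\mu_1^2 - \mu_2^2 - \mu_3^2)^2 - (2\mu_2\mu_3)^2$, to the degenerate triangle identity: some $\mu_i$ equals the sum of the other two, or equivalently, there exist signs $\epsilon_i \in \{\pm 1\}$ with $\sum \epsilon_i \mu_i = 0$. As in part \ref{p31}), SVD provides orthonormal bases realizing $\tilde M = \diag(\mu_1, \mu_2, \mu_3)$; replacing each $\zeta_j = e^{j+4}$ by $\epsilon_j \zeta_j$ (still orthonormal) turns $\tilde M$ into $\diag(\epsilon_j \mu_j)$, which is symmetric and now traceless. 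By Lemma \ref{lsa}, the $\G_2$-structure induced by the resulting $g$-orthonormal basis is purely coclosed. The delicate step throughout is the sign bookkeeping in the converse direction: the $\SO(3)$-action on the $\gr$-side only realizes coordinated (determinant-one) sign flips on $\Lambda^2_{\sst+}\gr^*$, so it is essential that the arbitrary sign flips available from the $\On(3)$-action on $(\gn')^*$ suffice to realize any pattern $(\epsilon_1, \epsilon_2, \epsilon_3)$ on the diagonal of $\tilde M$, which is what makes the trace-zero adjustment go through.
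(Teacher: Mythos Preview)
Your proof is correct and follows essentially the same route as the paper's. The only cosmetic difference is that where the paper uses polar decomposition (acting only on the $(\gn')^*$ side) and isolates the algebraic step as a separate Lemma~\ref{alg}, you use the singular value decomposition (acting on both sides via the $\SO(3)\times\On(3)$ action you describe) and carry out the same factorization and sign-flip argument inline; the two are equivalent here since diagonalizing the positive part of a polar decomposition is exactly SVD.
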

\begin{proof} \ \\ 
\ref{p31}) Consider any $g$-orthonormal bases $\{e_1,e_2,e_3,e_4\}$ and $\{ e_5, e_6, e_7\}$ of $\mr$ and $\mn'$, respectively, and denote by $\{e^5, e^6, e^7\}$ the dual basis of $(\gn')^*$. 
Fix the orientation $e^{1234}$ of $\mr$ and let $\{\sigma_1,\sigma_2,\sigma_3\}$ be the basis \eqref{sigma} of the space self-dual forms $\Lambda_{\sst+}^2\mr^*$. 
Let $M$ be the $3\times 3$ matrix with entries $M_{ij} \coloneqq g(\sigma_i,\d e^{j+4})$. 
We will show that the matrix $M$ can be made symmetric after a change of basis in $\mn'$.

Indeed, it follows from the polar decomposition of $M$ that there exists $P\in\mathrm{O}(3)$ such that $MP$ is symmetric. 
Using the matrix $P,$ we construct a new orthonormal basis $\{\tilde e^5, \tilde e^6,\tilde  e^7\}$ of $(\mn')^*$ as follows: for $j=1,2,3$ we define $ \tilde e^{j+4}=\sum_{k=1}^3P_{kj}  e^{k+4}$, 
and set $ \alpha_j \coloneqq\d  \tilde e^{j+4}$. 
A straightforward computation shows that $g(\sigma_i, \alpha_j)=(MP)_{ij}$. 
Therefore, by Lemma \ref{lsa}, the  $\G_2$-structure induced by the $g$-orthonormal basis $\{ e^1, e^2,e^3, e^4, \tilde e^5,\tilde e^6, \tilde e^7\}$ is coclosed. 
 
\smallskip \noindent
\ref{p32}) Suppose that $\mn$ admits a purely coclosed $\G_2$-structure inducing $g$. Then, by  Lemma \ref{lcal}, there also exists a purely coclosed G$_2$-structure $\f$ calibrating $\mn'$ and inducing $g$. 
Therefore, one can find an orthonormal basis $\{e_1, \ldots, e_7\}$ adapted to $\f$ such that $e_5,e_6,e_7$ span the derived algebra $\gn'$. 
We denote by $\{e^1,\ldots,e^7\}$ the dual basis of $\gn^*$ and we let  $\alpha_i \coloneqq \d e^{i+4}$, for $i=1,2,3$.

Consider the orientation of $\gr$ determined by  $e^{1234}$ and the self-dual forms $\sigma_i\in \Lambda_{\sst+}^2\mr^*$ given by \eqref{sigma}. 
By Lemma \ref{lsa}, the matrix $M$ with entries $M_{ij}\coloneqq g(\sigma_i,\alpha_j)$ is symmetric and trace-free.  
Since $\left\{\frac1{\sqrt2}\sigma_1,\frac1{\sqrt2}\sigma_2,\frac1{\sqrt2}\sigma_3\right\}$ 
forms an orthonormal basis of $\Lambda_{\sst+}^2\mr^*$, the Gram matrix $(S_{ij}) \coloneqq (g( \alpha_i^+,\alpha_j^+))$
is given by $S=\frac12M^2$. The fact that it verifies the required condition $\tr^2(S)=2\tr(S^2)$ is a consequence of the ``only if'' part of Lemma \ref{alg} below, with $P=\mathrm{I}_3$.

Moreover, for any other orthonormal coframe $\{\zeta_1, \zeta_2,\zeta_3\}$ of $(\mn')^*$, 
the Gram matrix $S'$ with entries $g( \d\zeta_i^+,\d\zeta_j^+)$ differs from the matrix $S$ above by conjugation with the $3\times 3$ matrix expressing $\{\zeta_1, \zeta_2,\zeta_3\}$ in terms of $\{e^5,e^6,e^7\}$, 
so the trace condition $\tr^2(S')=2\tr(S'^2)$ still holds.

Conversely, suppose that for some orientation of the $4$-dimensional space $\gr$, and for every 
orthonormal basis $\{\zeta_1,\zeta_2,\zeta_3\}$ of $(\mn')^*$, the Gram matrix $S$ with entries $S_{ij}\coloneqq g( \d\zeta_i^+,\d\zeta_j^+)$ satisfies $\tr^2(S)=2\tr(S^2)$. 

Let $\{e^1, \ldots, e^4\}$ be an oriented $g$-orthonormal basis of $\mr^*$ with respect to the given orientation, and let $\{\zeta_1,\zeta_2,\zeta_3\}$ be a $g$-orthonormal basis of  $(\mn')^*$. 
Consider the basis \eqref{sigma} of the space self-dual forms $\Lambda_{\sst+}^2\mr^*$, and let $M$ be the matrix with entries $M_{ij}\coloneqq g(\sigma_i,\d \zeta_j)$. 
Since the 2-forms $\sigma_i$ are mutually orthogonal and have norm $\sqrt2$, 
the Gram matrix of the vectors $\d \zeta_j^+\in \Lambda^2_{\sst+}\gr^*$, for $j=1,2,3,$ satisfies $S=\frac12M^*M$, where $M^*$ denotes the transpose of $M.$
By hypothesis, $S$ verifies $\tr^2(S)=2\tr(S^2)$ which, by Lemma \ref{alg} below, implies that there exists  
$P\in \mathrm{O}(3)$ such that $MP$ is symmetric and trace-free.

As in the first  part of the proof, we  define a new orthonormal basis $\{e^5,e^6,e^7\}$ of $(\mn')^*$ by setting $e^{j+4}=\sum_{k=1}^3P_{kj}\zeta_k$, for $j=1,2,3$. Denoting $\alpha_j\coloneqq\d e^{j+4}$, 
it is straightforward to check that $g(\sigma_i,\alpha_j)=(MP)_{ij}$, so that the $\G_2$-structure induced by the $g$-orthonormal basis $\{e^1, \ldots, e^7\}$ satisfies $g_\f=g$, 
and is purely coclosed by Lemma \ref{lsa}.
\end{proof}

We now give the algebraic result required in the proof of Proposition \ref{p3}.

\begin{lemma}\label{alg} 
Let $M\in M_3(\mathbb{R})$ and let $S\coloneqq \frac12M^*M,$ where $M^*$ denotes the transpose of $M$. 
Then, there exists $P\in\mathrm{O}(3)$ such that $MP$ is symmetric and trace-free, if and only if $\tr^2(S)=2\tr(S^2)$.
\end{lemma}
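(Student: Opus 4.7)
\emph{Plan.} The natural approach is to use the singular value decomposition of $M$, because both the trace invariants of $S=\frac12 M^*M$ and the condition of being symmetric-and-trace-free are essentially statements about the singular values of $M$ (which are unchanged when $M$ is post-multiplied by any $P\in\mathrm{O}(3)$).

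\emph{Step~1 (reformulating the trace identity).} Write the SVD $M=U\Sigma V^*$ with $U,V\in\mathrm{O}(3)$ and $\Sigma=\diag(\sigma_1,\sigma_2,\sigma_3)$, $\sigma_i\ge 0$. Then $2S=V\Sigma^2 V^*$, so
\[
2\tr(S)=\sigma_1^2+\sigma_2^2+\sigma_3^2,\qquad 4\tr(S^2)=\sigma_1^4+\sigma_2^4+\sigma_3^4.
\]
Expanding, the condition $\tr^2(S)=2\tr(S^2)$ is equivalent to
\[
\sigma_1^4+\sigma_2^4+\sigma_3^4-2(\sigma_1^2\sigma_2^2+\sigma_2^2\sigma_3^2+\sigma_3^2\sigma_1^2)=0,
\]
which, by exactly the same factorization used in the proof of Proposition~\ref{dimnn1}, can be written as
\[
(\sigma_1+\sigma_2+\sigma_3)(\sigma_1+\sigma_2-\sigma_3)(\sigma_1-\sigma_2+\sigma_3)(\sigma_1-\sigma_2-\sigma_3)=0.
\]
Since $\sigma_i\ge 0$, this is equivalent to the existence of signs $\epsilon_i\in\{\pm 1\}$ (not all equal, unless $M=0$) such that $\sum_{i=1}^3\epsilon_i\sigma_i=0$.

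\emph{Step~2 (``if'' direction).} Assume the trace identity. Choose signs $\epsilon_i$ as in Step~1 and set $E=\diag(\epsilon_1,\epsilon_2,\epsilon_3)\in\mathrm{O}(3)$ and $P\coloneqq VEU^*$. Then $P\in\mathrm{O}(3)$ and
\[
MP=U\Sigma V^*\cdot VEU^*=U(\Sigma E)U^*,
\]
which is symmetric (because $\Sigma E$ is diagonal) and has trace $\tr(\Sigma E)=\sum_i\epsilon_i\sigma_i=0$.

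\emph{Step~3 (``only if'' direction).} Conversely, assume $N\coloneqq MP$ is symmetric and trace-free for some $P\in\mathrm{O}(3)$. Since $P$ is orthogonal, $N^*N=P^*M^*MP=2P^*SP$ is conjugate to $2S$, so the singular values of $N$ are exactly $\sigma_1,\sigma_2,\sigma_3$. As $N$ is symmetric, the absolute values of its (real) eigenvalues coincide with its singular values; hence the eigenvalues of $N$ are $\epsilon_i\sigma_{\pi(i)}$ for some permutation $\pi$ and some signs $\epsilon_i\in\{\pm 1\}$. The vanishing of $\tr(N)$ then gives $\sum_i\epsilon_i\sigma_{\pi(i)}=0$, and after relabeling one of the four factors in the displayed polynomial vanishes; thus $\tr^2(S)=2\tr(S^2)$.

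The only non-routine point is the recognition that the invariants $\tr(S), \tr(S^2)$ encode precisely the condition ``one singular value is a signed sum of the others'' through the Heron-type factorization of Step~1; once this is in hand, both implications reduce to linear algebra via the SVD. No serious obstacle is expected.
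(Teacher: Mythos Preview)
Your proof is correct and follows essentially the same route as the paper: both arguments reduce the trace identity to the Heron-type factorization $(\sigma_1+\sigma_2+\sigma_3)(\sigma_1+\sigma_2-\sigma_3)(\sigma_1-\sigma_2+\sigma_3)(\sigma_1-\sigma_2-\sigma_3)=0$ in the singular values, and then build $P$ by orthogonal diagonalization with an appropriate sign flip. The only cosmetic difference is that the paper uses the polar decomposition $B=MQ$ (symmetric positive semi-definite) followed by an orthogonal diagonalization of $B$, whereas you invoke the SVD $M=U\Sigma V^*$ directly and set $P=VEU^*$; since the SVD is precisely polar decomposition plus diagonalization of the symmetric factor, the two constructions are equivalent, with yours being marginally more streamlined.
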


\begin{proof} 
Assume that $A\coloneqq MP$ is symmetric and trace-free for some $P\in\mathrm{O}(3)$. Its eigenvalues are $a$, $b$ and $-(a+b)$, for some $a,b\in \mathbb{R}$. 
Then, $S=\frac12M^*M=\frac12PA^2P^*$, whence
\[
\tr(S)=\frac12\tr(A^2)=\frac12(a^2+b^2+(a+b)^2)=a^2+b^2+ab,
\]
and
\[
\tr(S^2)=\frac14\tr(A^4)=\frac14(a^4+b^4+(a+b)^4)=\frac12(a^2+b^2+ab)^2=\frac12\tr^2(S).
\]

Conversely, assume that $\tr^2(S)=2\tr(S^2)$ and let $\frac12a^2,\frac12b^2,\frac12c^2$ denote the eigenvalues of $S$, with $a,b,c\in\mathbb{R}_{\ge 0}$. We have
\begin{eqnarray*}
0	&=&	8\tr(S^2)-4\tr^2(S)=2(a^4+b^4+c^4)-(a^2+b^2+c^2)^2\\
	&=&	a^4+b^4+c^4-2(a^2b^2+b^2c^2+c^2a^2)\\
	&=&	(a+b+c)(a+b-c)(a-b+c)(a-b-c).
\end{eqnarray*}
Up to a permutation, one can thus assume that $c=a+b$. 
On the other hand, the polar decomposition of $M$ gives a matrix $Q\in \mathrm{O}(3)$ such that $B\coloneqq MQ$ is symmetric and positive semi-definite. 
Since $B^2=(MQ)^*MQ=Q^*M^*MQ=2Q^*SQ$, the eigenvalues of $B$ 
are exactly $a,b,c$, and so there exists $R\in\On(3)$ such that 
$B=R\,\diag(a,b,c)R^*$. We can thus write 
\[
BR\,\diag(1,1,-1)R^*=R\,\diag(a,b,-c)R^*.
\]
This shows that $MP$ is symmetric and trace-free for $P\coloneqq QR\,\diag(1,1,-1)R^*\in\On(3)$.
\end{proof}

\smallskip

Propositions \ref{p3}, \ref{dim2} and \ref{dimnn1} complete the proofs of Theorem \ref{CCG2THM} and Theorem \ref{th} in all cases.
Notice that Theorem \ref{CCG2THM} can also be rephrased as follows. 
\begin{theorem}
Let $\gn$ be $7$-dimensional $2$-step nilpotent Lie algebra not isomorphic to $\gn_{7,2,A}$ or $\gn_{7,2,B}$. 
Then, for any metric $g$ on $\gn$ there exists a coclosed $\G_2$-structure $\f\in\Lambda^3_{\sst+}\gn^*$ such that $g_\f=g$.
\end{theorem}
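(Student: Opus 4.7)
The plan is to derive this statement as a direct reformulation of Theorem \ref{CCG2THM} combined with the classification of 7-dimensional 2-step nilpotent Lie algebras. Since Theorem \ref{CCG2THM} has been fully established by Propositions \ref{dimnn1}, \ref{dim2}, and \ref{p3}, essentially no new analytic work is required; the task reduces to matching the hypothesis ``not isomorphic to $\gn_{7,2,A}$ or $\gn_{7,2,B}$'' with the condition ``reducible or $\dim(\gn')\neq 2$'' used in Theorem \ref{CCG2THM}.

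More precisely, I would split the argument according to the dimension of the derived algebra $\gn'$, which we recall can only be $1$, $2$, or $3$ in dimension seven. In the cases $\dim(\gn')=1$ and $\dim(\gn')=3$, Theorem \ref{CCG2THM} immediately yields that every metric on $\gn$ is induced by a coclosed $\G_2$-structure, with no further discussion needed. The only case requiring attention is $\dim(\gn')=2$, where I would invoke the classification recalled in Section \ref{Sect2stepNil} (just before Corollary \ref{pro:n2cocl}): every 7-dimensional 2-step nilpotent Lie algebra with 2-dimensional derived algebra is isomorphic either to one of the four decomposable Lie algebras $\gn_{5,2}\oplus\bR^2$, $\gh_3\oplus\gh_3\oplus\bR$, $\gh_3^{\bC}\oplus\bR$, $\gn_{6,2}\oplus\bR$, or to one of the two indecomposable algebras $\gn_{7,2,A}$, $\gn_{7,2,B}$. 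Consequently, under the hypothesis that $\gn$ is not isomorphic to $\gn_{7,2,A}$ or $\gn_{7,2,B}$, the Lie algebra $\gn$ is forced to be reducible, and again Theorem \ref{CCG2THM} applies.

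The only potential subtlety—and the closest thing to an ``obstacle'' here—is to make sure the equivalence \textbf{$\gn$ irreducible with $\dim(\gn')=2$ $\Longleftrightarrow$ $\gn\simeq \gn_{7,2,A}$ or $\gn_{7,2,B}$} is correctly extracted from the classification table in Appendix \ref{2stepnilclass}. This is essentially bookkeeping: one needs only to check that all other entries of the classification with $\dim(\gn')=2$ appear as direct sums of lower-dimensional 2-step nilpotent factors (and an abelian summand), whereas $\gn_{7,2,A}$ and $\gn_{7,2,B}$ admit no nontrivial abelian factor since their centers coincide with $\gn'$. Once this verification is in place, the rephrased theorem follows without further computation.
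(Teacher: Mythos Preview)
Your proposal is correct and matches the paper's approach exactly: the paper presents this theorem as a direct rephrasing of Theorem~\ref{CCG2THM}, relying on the classification in Appendix~\ref{2stepnilclass} to identify the irreducible algebras with $\dim(\gn')=2$ as precisely $\gn_{7,2,A}$ and $\gn_{7,2,B}$. One minor inaccuracy: the classification discussion you cite sits at the start of Section~4.2 (Case~2), not in Section~\ref{Sect2stepNil}.
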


As discussed in Sect.~\ref{G2mfds}, there is a close interplay between coclosed $\G_2$-structures and half-flat SU(3)-structures. In particular, on a 7-dimensional decomposable 2-step nilpotent metric 
Lie algebra $(\gn,g) = (\tilde\mn,g_{\tilde\mn}) \oplus (\bR,g_{\bR})$ there exists a coclosed G$_2$-structure $\f$ such that $g_\f=g_{\tilde\mn} + g_{\bR}$ if and only if 
there is a half-flat SU(3)-structure on $\tilde\mn$ with corresponding metric $g_{\tilde\mn}$. This fact allows us to deduce the following consequence of the previous result. 

\begin{corollary}
Every metric on a  $6$-dimensional $2$-step nilpotent Lie algebra is induced by a half-flat $\SU(3)$-structure.
\end{corollary}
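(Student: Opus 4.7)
My plan is to deduce the corollary from Theorem \ref{CCG2THM} by passing from $6$ to $7$ dimensions via an abelian extension. Given a $6$-dimensional $2$-step nilpotent Lie algebra $\tilde\mn$ with metric $g_{\tilde\mn}$, I would set $\mn\coloneqq\tilde\mn\oplus\bR$ equipped with the product metric $g\coloneqq g_{\tilde\mn}+g_\bR$. Then $\mn$ is $7$-dimensional $2$-step nilpotent, its derived algebra coincides with $\tilde\mn'$, and it is decomposable by construction; in particular it is not isomorphic to the indecomposable Lie algebras $\gn_{7,2,A}$ or $\gn_{7,2,B}$. Theorem \ref{CCG2THM} (in the rephrased form stated just above) therefore produces a coclosed $\G_2$-structure $\f\in\Lambda^3_{\sst+}\mn^*$ with $g_\f=g$.

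The next step is to read off a half-flat $\SU(3)$-structure on $\tilde\mn$ from $\f$. I would pick a unit vector $z$ generating the $\bR$-factor; since $z$ lies in an abelian ideal, its $g$-dual satisfies $\d z^\flat=0$. As recalled in Sect.~\ref{BasicDef}, $\f$ then determines an $\SU(3)$-structure $(h,J,\omega,\psi_{\sst\pm})$ on $W=z^\perp=\tilde\mn$ via
\[
\f = \omega\W z^\flat + \psip,\qquad *_\f\f = \tfrac12\,\omega^2 + \psim\W z^\flat,
\]
with $h=g_{\tilde\mn}$ automatically. Using $\d z^\flat=0$ together with $\omega,\psi_{\sst\pm}\in\Lambda^*\tilde\mn^*$, the coclosedness of $\f$ becomes
\[
0 = \d*_\f\f = \d\omega\W\omega + \d\psim\W z^\flat.
\]
The two summands lie in complementary subspaces of $\Lambda^5\mn^*$ (namely $\Lambda^5\tilde\mn^*$ and $\Lambda^4\tilde\mn^*\W z^\flat$), so each of them must vanish separately, yielding exactly the half-flat conditions $\d\omega\W\omega=0$ and $\d\psim=0$ from \eqref{HFSU3}.

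All the real content is packaged into Theorem \ref{CCG2THM}; the only additional step is the elementary observation that a coclosed $\G_2$-structure on $\tilde\mn\oplus\bR$ inducing the product metric restricts to a half-flat $\SU(3)$-structure on $\tilde\mn$, which follows at once from the centrality of the $\bR$-factor in the type decomposition of $\Lambda^5\mn^*$. I therefore do not anticipate any substantive obstacle in carrying out this argument.
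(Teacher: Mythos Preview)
Your proposal is correct and follows precisely the route the paper takes: extend $(\tilde\mn,g_{\tilde\mn})$ to $(\tilde\mn\oplus\bR,g_{\tilde\mn}+g_\bR)$, invoke Theorem \ref{CCG2THM} to obtain a coclosed $\G_2$-structure inducing the product metric, and then use the reduction of Sect.~\ref{BasicDef} along the unit central vector $z$ together with $\d z^\flat=0$ to see that the induced $\SU(3)$-structure on $\tilde\mn$ is half-flat. The paper states this correspondence as a one-line observation before the corollary; you have simply unpacked the details of why $\d*_\f\f=0$ splits into the two half-flat equations, which is exactly the computation from Sect.~\ref{G2mfds} specialized to the Lie algebra setting.
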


We conclude this section with a remark on calibrations. We have proved in Lemma \ref{lcal} that if there is a coclosed $\G_2$-structure inducing a given metric on a 2-step nilpotent Lie algebra 
with 3-dimensional derived algebra $\mn'$, then there is also one inducing the same metric and calibrating $\mn'$. 
In fact, with the exception of the Lie algebras $\mn$ isomorphic to $\mn_{7,3,A}$, {\em every} coclosed $\G_2$-structure on $\mn$ calibrates $\mn'$, as the next result shows.

\begin{proposition}\label{corcal}
If $\gn$ is a $7$-dimensional $2$-step nilpotent Lie algebra with $\dim(\gn')=3$ and not isomorphic to $\gn_{7,3,A}$, then every coclosed $\rm{G}_2$-structure $\f$ on $\gn$ calibrates $\gn'$.
\end{proposition}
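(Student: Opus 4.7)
The plan is to argue by contradiction. Suppose $\f$ is a coclosed $\G_2$-structure on $\gn$ that does not calibrate $\gn'$; I will show this forces $\gn\cong\gn_{7,3,A}$.

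First I would reproduce the opening construction from the proof of Lemma \ref{lcal}: pick an orthonormal basis $\{e_1,\ldots,e_7\}$ adapted to $\f$ with $e_6,e_7\in\gn'$, and a unit vector $\tilde e_5=\lambda e_4+\mu e_5\in\gn'$ orthogonal to $e_6,e_7$ with $\lambda^2+\mu^2=1$. The hypothesis that $\gn'$ is not calibrated by $\f$ is precisely the statement $\lambda\ne 0$. Setting $\tilde e_4=\mu e_4-\lambda e_5$ and $\tilde e_i=e_i$ for $i\in\{1,2,3,6,7\}$ yields a new orthonormal basis in which $\gn'=\langle\tilde e_5,\tilde e_6,\tilde e_7\rangle$.

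The key observation is already contained inside the proof of Lemma \ref{lcal}: the coclosedness $\d{*_\f}\f=0$ combined with $\lambda\ne 0$ gives $\tilde e^2\wedge\alpha_i=0$ for $i=1,2,3$, where $\alpha_i:=\d\tilde e^{i+4}\in\Lambda^2\tilde\gr^*$ and $\tilde\gr=\langle \tilde e_1,\tilde e_2,\tilde e_3,\tilde e_4\rangle$. Hence each $\alpha_i$ has the form $\tilde e^2\wedge\beta_i$ for some $\beta_i\in\langle\tilde e^1,\tilde e^3,\tilde e^4\rangle$, and since $\dim(\gn')=3$ forces the $\alpha_i$ to be linearly independent, the $\beta_i$ form a basis of $\langle\tilde e^1,\tilde e^3,\tilde e^4\rangle$. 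A linear change of basis in $(\gn')^*$ (which simply recombines the $\alpha_i$ by the inverse transformation) then normalizes the structure equations to
\[
\d\tilde e^i=0 \ \ (i=1,\ldots,4),\quad \d\tilde e^5=\tilde e^{21},\quad \d\tilde e^6=\tilde e^{23},\quad \d\tilde e^7=\tilde e^{24}.
\]
This exhibits $\gn$ as the semidirect product $\R\tilde e_2\ltimes\R^6$ with a codimension-one abelian ideal, and identifies it with $\gn_{7,3,A}$ via the classification recalled in Appendix \ref{2stepnilclass}, contradicting the assumption.

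The only genuinely new step is the last identification: all the differential-geometric content is extracted verbatim from Lemma \ref{lcal}, and the linear-algebraic manipulation of recognizing $\alpha_i$ as $\tilde e^2\wedge\beta_i$ and diagonalizing by a change of basis in $(\gn')^*$ is straightforward. The main obstacle is thus the bookkeeping verification that the explicit Lie algebra with structure equations $(0,0,0,0,\tilde e^{21},\tilde e^{23},\tilde e^{24})$ corresponds to the class $\gn_{7,3,A}$ in Gong's list; among $7$-dimensional $2$-step nilpotent Lie algebras with $\dim(\gn')=3$, it is characterized by the existence of a codimension-one abelian ideal.
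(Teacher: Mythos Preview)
Your proof is correct. Both your argument and the paper's arrive at the same crucial fact---the existence of a nonzero covector $\xi\in\gr^*$ with $\xi\wedge\d\zeta=0$ for all $\zeta\in\gn^*$---but by different routes. You recycle the explicit computation inside Lemma~\ref{lcal}, where taking interior products of $\d*_\f\f=0$ with pairs among $\tilde e_5,\tilde e_6,\tilde e_7$ (and using $\lambda\ne0$) yields $\tilde e^2\wedge\alpha_i=0$; then you push further to an explicit identification of the structure equations with those of $\gn_{7,3,A}$. The paper instead gives a coordinate-free argument: using the Cartan formula $z\lrcorner\d\alpha=-\d(z\lrcorner\alpha)$ for $z\in\gz$, it shows $\d(z_1\lrcorner z_2\lrcorner *_\f\f)=0$, decomposes $z_1\lrcorner z_2\lrcorner *_\f\f=z^3\wedge\xi+\gamma$ with $\xi\in\gr^*$, and reads off $\xi\wedge\d z^i=0$ together with $\xi=z_1\lrcorner z_2\lrcorner z_3\lrcorner *_\f\f$, which is nonzero precisely when $\gn'$ is not calibrated. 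It then simply inspects the list in Appendix~\ref{2stepnilclass} to see that only $\gn_{7,3,A}$ admits such a $\xi$. Your approach is efficient given that Lemma~\ref{lcal} is already in hand, and your explicit normalization of the structure equations makes the identification with $\gn_{7,3,A}$ transparent; the paper's approach is more self-contained and conceptually cleaner, isolating the obstruction $\xi$ intrinsically without reference to an adapted frame.
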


\begin{proof}

Recall that a $\G_2$-structure $\f$ calibrates $\mn'$ if and only if $\f(z_1,z_2,z_3)=\pm1$ for some (and thus every) $g_\f$-orthonormal basis $\{z_1,z_2,z_3\}$ of  $\mn'$. 
It is easy to check that this is equivalent to having $z_1\lrcorner z_2\lrcorner z_3\lrcorner *_\f\f=0$.

Let $\mn$ be a 7-dimensional 2-step nilpotent Lie algebra with $\dim(\mn')=3$ and let $\f$ be a coclosed $\G_2$-structure on it. 
We claim that if $\f$ does not calibrate $\gn'$, then there exits a non-zero covector $\xi\in\mn^*$ such that  that for every $\zeta\in\gn^*$, $\xi\wedge\d \zeta=0$.

Let  $\{z_1,z_2,z_3\}$ be a $g_\f$-orthonormal basis of $\gn'$ and denote by $\{z^1,z^2,z^3\}$ the dual basis of $(\gn')^*$. Since $\gn'\subset\gz$, the Cartan formula shows that 
$z\lrcorner \d\alpha=-\d(z\lrcorner\alpha)$, for every $z\in\gn'$ and $\alpha\in\Lambda^k\gn^*$. We thus have
\begin{equation}\label{zz0}
\d(z_1\lrcorner z_2\lrcorner *_\f\f)=-z_1\lrcorner \d(z_2\lrcorner *_\f\f)=z_1\lrcorner z_2\lrcorner (\d*_\f\f) = 0.
\end{equation}
The 2-form $z_1\lrcorner z_2\lrcorner *_\f\f$ vanishes on $z_1$ and $z_2$, so decomposing $\gn=\gr\oplus\gn'$ it can be written as
\begin{equation}\label{zz}
z_1\lrcorner z_2\lrcorner *_\f\f=z^3\wedge \xi+\gamma,
\end{equation}
with $\xi\in\gr^*$ and $\gamma\in\Lambda^2\gr^*$. Then, $\d\xi=0$ and $\d\gamma=0$, and so \eqref{zz0}--\eqref{zz} give
\[
0=\d (z^3\wedge \xi+\gamma)=\d z^3\wedge \xi. 
\]
Similarly, we obtain  $\xi\wedge\d z^1=\xi\wedge\d z^2=0$. Since the differential vanishes on $\gr=(\gn')^\perp$, this implies $\xi\wedge\d \zeta=0$ for every $\zeta\in\gn^*$.
Moreover, if $\f$ does not calibrate $\mn'$, then $\xi\ne 0$. Indeed,
\[
\xi=z_3\lrcorner(z^3\wedge \xi+\gamma)=z_1\lrcorner z_2\lrcorner z_3\lrcorner *_\f\f\ne 0,
\]
thus proving our claim.

To finish the proof, it is enough to observe that if $\mn$ is not isomorphic to $\gn_{7,3,A}$ then, according to the  classification given in Appendix \ref{2stepnilclass} for $\dim(\mn')=3$, 
there is no non-zero covector $\xi$ whose wedge product with all the differentials of covectors in $\gn^*$ vanishes. 
\end{proof}

In the case $\gn_{7,3,A}$ not covered by this result, one can actually construct examples of coclosed G$_2$-structures not calibrating $\gn'$. 

\begin{example} Consider the 7-dimensional 2-step nilpotent Lie algebra 
\[\mn_{7,3,A}= (0,0,0,0,f^{12},f^{23},f^{24}).\] For any non-zero real numbers $a,b,c$, we define a new coframe $\{e^1, \ldots, e^7\}$ as follows
\[
e^i =f^i, \mbox{ for }i=1,2,3, \mbox{ and }
e^4 = a f^7,\, e^5 = f^4,\, e^6 = -bf^6,\, e^7 = cf^5.
\]
With respect to this coframe, the structure equations become $\d e^4=ae^{25}$, $\d e^6=-b e^{23}$ and $\d e^7=c e^{12}$, and the derived algebra is spanned by  $ e_4,e_6,e_7$. 
It is straightforward to check that the G$_2$-structure $\f$ on $\gn_{7,3,A}$ induced by the basis $\{e_1,\ldots,e_7\}$ is always coclosed and that it is purely coclosed if and only if $a+b+c=0$.  
However, $\gn_{7,3,A}'$ is not calibrated by $\f$, as $\f(e_4,e_6,e_7)=0$. 
\end{example}

\section{Classification results for metrics induced by purely coclosed G$_2$-structures} \label{metricn2}

We now use the characterizations obtained in Section \ref{S5} to study the existence of purely coclosed G$_2$-structures inducing a given metric  
on explicit examples of 2-step nilpotent metric Lie algebras $(\gn,g)$. As before, the discussion will be made according to the dimension of the derived algebra $\gn'$. 

Along this section, we denote the symmetric product of two covectors $f^i,f^j\in \mn^*$ by $f^i\odot f^j \coloneqq \frac12 (f^i\otimes f^j+f^j\otimes f^i)$. 


\subsection{Case 1: $\dim(\mn')=1$}\label{subn1} 
We begin by describing the metrics on the Lie algebras $\gh_3\oplus\R^4$, $\gh_5\oplus\R^2$ and $\gh_7$ up to equivalence. 
Recall that two metrics $g,g'$ on a Lie  algebra $\mn$ are equivalent if there exists an automorphism $F$ of $\mn$ such that $F^*g'=g$.

\begin{proposition}\label{pro:classifheis} 
Let $\gn$ be a $7$-dimensional $2$-step nilpotent Lie algebra with $\dim (\gn')=1$.  
\begin{enumerate}[$\bullet$]
\item If $\mn=\mh_3\oplus\R^4=(0,0,0,0,0,0,f^{12})$, then any metric on $\mn$ is equivalent to 
\begin{equation}\label{metheis3}
g= {r^2} f^1\odot f^1+f^2\odot f^2+f^3\odot f^3+f^4\odot f^4+f^5\odot f^5+f^6\odot f^6+f^7\odot f^7,
\end{equation}
for some $r>0$.
\item If $\mn=\mh_5\oplus\R^2=(0,0,0,0,0,0,f^{12}+f^{34})$, then any metric on $\mn$ is equivalent to 
\begin{equation}\label{metheis5}
g = {r^2} f^1\odot f^1+f^2\odot f^2+ {s^2} f^3\odot f^3+f^4\odot f^4+f^5\odot f^5+f^6\odot f^6+f^7\odot f^7,
\end{equation}
for some $0<r\leq s$. 
\item If $\mn=\mh_7=(0,0,0,0,0,0,f^{12}+f^{34}+f^{56})$, then any metric on $\mn$ is equivalent to 
\begin{equation}\label{metheis7}
g = {r^2} f^1\odot f^1+f^2\odot f^2+ {s^2} f^3\odot f^3+f^4\odot f^4+ {t^2} f^5\odot f^5+f^6\odot f^6+f^7\odot f^7,
\end{equation}
for some $0< r\leq s\leq t$.
\end{enumerate}
Moreover, the metrics belonging to the same family are pairwise non-equivalent for different values of the parameters. 
\end{proposition}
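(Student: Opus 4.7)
The plan is to leverage the structural description from Section \ref{Sect2stepNil}: a 2-step nilpotent metric Lie algebra $(\gn, g)$ with $\dim(\gn') = 1$ is entirely encoded by the triple $(\gr, \gn', j)$, and fixing a $g$-unit generator $z \in \gn'$ reduces this to a single skew-symmetric endomorphism $A \coloneqq -j(z) \in \so(\gr)$. The classification of $g$ up to Lie algebra automorphism therefore becomes a classification of such $A$ up to orthogonal conjugation and sign.

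For the existence part, I would apply the spectral theorem for skew-symmetric endomorphisms to obtain a $g_\gr$-orthonormal basis $\{e_1,\ldots,e_6\}$ of $\gr$ in which $A$ is block-diagonal with blocks $\begin{psmallmatrix} 0 & -a_k\\ a_k & 0\end{psmallmatrix}$, $k = 1, 2, 3$, ordered so that $a_1 \geq a_2 \geq a_3 \geq 0$. Since $\ga=\ker A$ in this setting, the number of non-zero $a_k$'s equals $\tfrac{1}{2}\rk(A)=3-\dim\ga$ and depends only on the isomorphism class of $\gn$: it equals $1$, $2$, $3$ for $\gh_3\oplus\R^4$, $\gh_5\oplus\R^2$, $\gh_7$, respectively. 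Then I would rescale the ``odd'' basis vector of each non-zero block by setting $f_{2k-1}\coloneqq a_k^{-1}e_{2k-1}$ and $f_{2k}\coloneqq e_{2k}$, leaving the basis vectors of $\ker A$ unchanged, and setting $f_7\coloneqq z$. A direct computation from \eqref{jz} gives $\d f^7=f^{12}$ (resp.\ $f^{12}+f^{34}$, resp.\ $f^{12}+f^{34}+f^{56}$), while $g$ takes the form \eqref{metheis3}, \eqref{metheis5} or \eqref{metheis7} with parameters $r = 1/a_1$ in the first case, $(r, s) = (1/a_1, 1/a_2)$ in the second, and $(r, s, t) = (1/a_1, 1/a_2, 1/a_3)$ in the third. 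The ordering $a_1 \geq a_2 \geq a_3 > 0$ translates into $0 < r \leq s \leq t$.

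For the non-equivalence, I would argue that the unordered multiset of non-negative singular values of $A$ is an invariant of $(\gn, g)$ under Lie algebra automorphisms. Given $F \in \Aut(\gn)$ with $F^* g' = g$, the map $F$ is an isometry $(\gn, g) \to (\gn, g')$ preserving the characteristic ideal $\gn'$; being isometric on the $1$-dimensional $\gn'$, it sends the $g$-unit generator $z$ to $\pm z'$, the $g'$-unit generator. The restriction $F|_\gr$ is then an isometry $\gr \to \gr'=(\gn')^{\perp_{g'}}$, and using \eqref{jz} one verifies that $F|_\gr$ conjugates $A=-j_g(z)$ to $\pm A'=\mp j_{g'}(z')$. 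Hence $A$ and $A'$ share the same multiset of non-negative singular values, which in the standard presentation are $\{1/r\}$, $\{1/r,1/s\}$ or $\{1/r,1/s,1/t\}$; the admissible ordering $r \leq s \leq t$ then pins down the equivalence class uniquely.

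I expect the main subtlety to be precisely this non-equivalence step: one must rule out that an automorphism could rescale the singular values by mixing $\gn'$ with $\gr$. The key observation is that once $g$ and $g'$ are fixed, $F$ acts on the one-dimensional $\gn'$ by the sign $\pm 1$ rather than by an arbitrary scalar, because it must be an isometry between the two metrics; this rigidity is what prevents any further identification of parameters.
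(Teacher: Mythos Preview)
Your proposal is correct and follows essentially the same approach as the paper's proof: both use the spectral decomposition of $A=-j(z)$ to bring the metric into normal form via a rescaling of the odd-indexed basis vectors, and both establish non-equivalence by showing that an automorphism $F$ with $F^*g'=g$ must act by $\pm1$ on $\gn'$ and therefore conjugates $A$ to $\pm A'$, forcing the block parameters (equivalently, the eigenvalues of $A^2$) to agree. The only cosmetic differences are that the paper works with the fixed subspace $U=\langle f_1,\ldots,f_6\rangle$ (which coincides with both $\gr$ and $\gr'$ since the metrics are already in normal form) rather than with the abstract orthogonal complements, and phrases the invariant as the spectrum of $A^2$ rather than the singular values of $A$.
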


\begin{proof}
Let $(\mn,g)$ be a $7$-dimensional $2$-step nilpotent metric Lie algebra with 1-dimensional derived algebra. Consider the orthogonal decomposition $\mn=\mr\oplus\mn'$ 
and fix a generator $e_7\in \mn'$ of unit length, with metric dual $e^7$. 
Let $A$ denote the skew-symmetric endomorphism of $\mr$ determined by $\d e^7$, and let $\{e_1, \ldots, e_6\}$ be an orthonormal basis of $\mr$ such that   
\begin{equation}\label{jzh}
A = 
\begin{psmallmatrix}
0&-a_1&0&0&0&0\\ \noalign{\medskip}
a_1&0&0&0&0&0\\ \noalign{\medskip}
0&0&0&-a_3&0&0\\ \noalign{\medskip}
0&0&a_3&0&0&0\\ \noalign{\medskip}
0&0&0&0&0&-a_5\\ \noalign{\medskip}
0&0&0&0&a_5&0
\end{psmallmatrix},
\end{equation}
for some $ a_1\geq a_3\geq a_5\geq 0$, with $a_1\neq0$. Let $\{e^1,\ldots,e^7\}$ be the dual basis of $\{e_1,\ldots,e_7\}$. 
This implies that $\d e^7=a_1e^{12}+a_3e^{34}+a_5e^{56}$, and we easily see that 
the Lie algebra $\mn$ is isomorphic to $\mh_3\oplus\R^4$ when $a_3=a_5=0$, to $\mh_5\oplus\R^2$ when $a_3\neq 0$ and $a_5=0$, and to $\mh_7$ if all of these coefficients are non-zero.  

Consider a new basis $\{f^1,\ldots,f^7\}$ of $\gn^*$ defined as follows: $f^i \coloneqq e^i$, for $i=2,4,6,7,$ and       
\[
f^i \coloneqq 
\begin{dcases}
a_i e^i,\quad \mbox{ if } a_i\neq0,\\
e^i,\qquad \mbox{ if } a_i = 0,
\end{dcases}
\]
for $i=1,3,5$. 
Then, we have $\d f^7=f^{12}+\varepsilon_3 f^{34}+\varepsilon_5 f^{56}$, where $\varepsilon_3,\varepsilon_5\in \{0,1\}$, $\varepsilon_3\geq\varepsilon_5$, 
and $\varepsilon_k$ vanishes whenever $a_k$ does. We can thus assume that this basis is the one defining the Lie algebra structure. 
It is now immediate to check that the expression of the metric $g$ with respect to the basis $\{f^1, \ldots, f^7\}$ is the one given in the statement of the  proposition, 
where $r,s,t$ coincide with $(a_1)^{-1},(a_3)^{-1},(a_5)^{-1}$, respectively, when $a_1,a_3,a_5$ are non-zero, and they are equal to one otherwise. 

To conclude the proof, we need to show that the metrics belonging to the same family are pairwise non-equivalent for different values of the parameters. 
Consider the basis $\{f_1,\ldots,f_7\}$ of $\mn$ with dual basis $\{f^1,\ldots,f^7\}$. Let $g,g'$ be two metrics on $\gn$ belonging to the same family, and let $F$ be an automorphism of $\mn$ such that $F^*g'=g$. 
Then, $F(f_7)=\pm f_7$ and $F$ preserves the subspace $U\coloneqq\lela f_1, \ldots, f_6\rira$. 
Moreover, the endomorphisms $A, A'$ of $U$ corresponding to $\d f^7$ by means  of $g$ and $g'$, respectively, 
verify $FAF^{-1}=\pm A'$. 
Hence, $A^2$ and $(A')^2$ have the same eigenvalues, which correspond to the parameters of the metric.
\end{proof}

Now, for each 2-step nilpotent metric Lie algebra $(\gn,g)$ with $\dim (\gn')=1$ and $g$ as in Proposition \ref{pro:classifheis}, 
we study the existence of a purely coclosed G$_2$-structure $\f$ inducing $g$.

\begin{proposition}\label{proexheis}
A metric Lie algebra $(\mn,g)$ with $\dim(\mn')=1$ and $g$ as in Proposition \ref{pro:classifheis} admits a purely coclosed $\G_2$-structure $\f$ such that $g_\f=g$  
if and only if one of the following conditions holds:
\begin{enumerate}[a)]
\item $\mn\cong \mh_5\oplus\R^2$ and $g$ is as in \eqref{metheis5} with $r=s$;
\item $\mn\cong \mh_7$ and $g$ is as in \eqref{metheis7} with $\frac1r=\frac1s+\frac1t$.
\end{enumerate}
\end{proposition}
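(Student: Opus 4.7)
The plan is to apply Proposition \ref{dimnn1} and reduce the question to a purely algebraic condition on the eigenvalues of $j(e_7)$, where $e_7$ is the unit vector in $\mathfrak{n}'$. Since the condition $\tr^2(j(z)^2)=4\tr(j(z)^4)$ is homogeneous of degree $4$ in $z$, it is enough to test it on a single unit vector spanning $\mathfrak{n}'$.

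First I would rewrite the structure equation $\d f^7 = f^{12}+\varepsilon_3 f^{34}+\varepsilon_5 f^{56}$ of Proposition \ref{pro:classifheis} in terms of the $g$-orthonormal basis $\{e_i\}$ obtained by rescaling $f_1, f_3, f_5$ by $\frac{1}{r}, \frac{1}{s}, \frac{1}{t}$ (where undefined parameters are set to $1$). This gives $\d e^7 = a_1 e^{12} + a_3 e^{34} + a_5 e^{56}$, with $(a_1,a_3,a_5) = (\tfrac1r, 0,0)$ for $\mathfrak{h}_3\oplus\R^4$, $(\tfrac1r, \tfrac1s, 0)$ for $\mathfrak{h}_5\oplus\R^2$, and $(\tfrac1r, \tfrac1s, \tfrac1t)$ for $\mathfrak{h}_7$. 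Reading off the skew-symmetric endomorphism $-j(e_7)\in\so(\mathfrak{r})$ in block diagonal form gives immediately
\[
\tr(j(e_7)^2) = -2(a_1^2+a_3^2+a_5^2), \qquad \tr(j(e_7)^4) = 2(a_1^4+a_3^4+a_5^4),
\]
so the criterion of Proposition \ref{dimnn1} becomes
\[
(a_1^2+a_3^2+a_5^2)^2 = 2(a_1^4+a_3^4+a_5^4).
\]

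The key algebraic step is the factorization
\[
2(a_1^4+a_3^4+a_5^4)-(a_1^2+a_3^2+a_5^2)^2 = (a_1+a_3+a_5)(-a_1+a_3+a_5)(a_1-a_3+a_5)(a_1+a_3-a_5),
\]
which I would verify by expansion (or by viewing it as a symmetric polynomial identity in $a_i^2$). Since $a_1\ge a_3\ge a_5\ge 0$ with $a_1>0$, the first factor is strictly positive, so the condition reduces to one of the three remaining factors vanishing; that is, one of $a_1,a_3,a_5$ must equal the sum of the other two. Case analysis then gives the claim: for $\mathfrak{h}_3\oplus\R^4$ one has $a_3=a_5=0$, making all three factors non-zero, whence no solution (recovering Corollary \ref{cor:pcch3}); for $\mathfrak{h}_5\oplus\R^2$ with $a_5=0$ the only viable factor forces $a_1=a_3$, i.e.\ $r=s$; and for $\mathfrak{h}_7$ the ordering rules out two factors and the remaining one yields $a_1 = a_3+a_5$, i.e.\ $\tfrac1r=\tfrac1s+\tfrac1t$.

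No real obstacle is expected here: the symmetric-function identity and the monotonicity argument used to single out the relevant factor are elementary, and the surjective direction of Proposition \ref{dimnn1} then produces the purely coclosed $\mathrm{G}_2$-structure automatically. The only minor care needed is keeping track of the rescaling between the basis $\{f^i\}$ in which the metric is diagonal and the orthonormal basis $\{e^i\}$ in which $j(e_7)$ has standard block form, so that the eigenvalues $a_1,a_3,a_5$ are correctly identified with the reciprocals of $r,s,t$.
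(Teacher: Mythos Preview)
Your proposal is correct and mirrors the paper's own proof: both invoke the trace criterion of Proposition~\ref{dimnn1}, identify the eigenvalues of $j(e_7)$ with $1/r,\,1/s,\,1/t$, factorize $\tr(A^4)-\tfrac14\tr^2(A^2)$ into four linear factors, and then use the ordering $0<r\le s\le t$ to isolate the unique factor that can vanish in each case. Note only that your displayed factorization carries a sign error (the right-hand side equals the negative of the left, or equivalently one factor should read $(a_1-a_3-a_5)$), which is harmless here since only the zero locus is relevant.
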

\begin{proof} 
We already know from Corollary \ref{cor:pcch3} that $\mh_3\oplus\R^4$ does not admit any purely coclosed G$_2$-structure.
For the remaining two cases, we use the criterion obtained in Proposition \ref{dimnn1} to prove the assertion.  

Suppose that $\mn$ is isomorphic to $\mh_7$ and that $g$ has the form  \eqref{metheis7} with respect to a suitable basis $\{f^1,\ldots,f^7\}$ of $\gn^*$ such that $\d f^7=f^{12}+f^{34}+f^{56}$.   

Then, the matrix associated  to $\d f^7$ with respect to the basis $\{f_1,\ldots,f_6\}$ is given by 
\[
A = 
\begin{psmallmatrix}
0&-r^{-2}&0&0&0&0\\ \noalign{\medskip}
1&0&0&0&0&0\\ \noalign{\medskip}
0&0&0&-s^{-2}&0&0\\ \noalign{\medskip}
0&0&1&0&0&0\\ \noalign{\medskip}
0&0&0&0&0&-t^{-2}\\ \noalign{\medskip}
0&0&0&0&1&0
\end{psmallmatrix},
\]
and we have
\[
\tr(A^4)-\frac14\tr^2(A^2) = \left(\frac1r+\frac1s+\frac1t\right) \left(\frac1r+\frac1s-\frac1t\right) \left(\frac1r-\frac1s+\frac1t\right) \left(\frac1r-\frac1s-\frac1t\right).  
\] 
Since $0<r\leq s\leq t$, we see that the expression above is zero if and only if $\frac1r=\frac1s+\frac1t$. 

A similar discussion shows that on $\gh_5\oplus\R^2$ there exists a purely coclosed G$_2$-structure inducing the metric given in \eqref{metheis5} if and only if $r=s$.
\end{proof}

This classification allows us to determine whether the nilsoliton metrics on 2-step nilpotent Lie algebras with 1-dimensional derived algebra are induced by purely coclosed $\G_2$-structures.
Indeed, from \cite[Thm.~5.1]{Lau}, the nilsoliton metrics on $\gh_5\oplus\R^2$ correspond to $r=s$ in \eqref{metheis5}, and by  
\cite{FeC}, the nilsoliton metrics on $\gh_7$ correspond to $r=s=t$ in \eqref{metheis7}. 
We thus obtain the following consequence of Proposition \ref{proexheis}. 
\begin{corollary}\label{ricci1}
On $\gh_5\oplus\R^2$ there exist purely coclosed G$_2$-structures inducing a nilsoliton metric, while the nilsoliton metrics on $\gh_7$ are not induced by any purely coclosed G$_2$-structure. 
\end{corollary}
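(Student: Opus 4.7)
The proof is essentially a direct comparison of two conditions, one from the preceding Proposition \ref{proexheis} and the other from the cited literature. My plan is simply to identify the nilsoliton metrics inside the normalized families \eqref{metheis5} and \eqref{metheis7}, and then check whether the corresponding parameter values satisfy the purely coclosed existence condition.

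First, I recall from Proposition \ref{pro:classifheis} that every metric on $\gh_5\oplus\R^2$ is equivalent to one of the form \eqref{metheis5} with $0<r\leq s$, and every metric on $\gh_7$ is equivalent to one of the form \eqref{metheis7} with $0< r\leq s\leq t$. Since two equivalent metrics are isometric by a Lie algebra automorphism, the existence of a compatible purely coclosed $\G_2$-structure depends only on the equivalence class. The nilsoliton metric on each of these Lie algebras is unique up to automorphism and scaling, so it is enough to locate it within these normalized families.

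Next, I invoke the cited classification results: by \cite[Thm.~5.1]{Lau} the nilsoliton metric on $\gh_5\oplus\R^2$ corresponds to $r=s$ in \eqref{metheis5}, and by \cite{FeC} the nilsoliton metric on $\gh_7$ corresponds to $r=s=t$ in \eqref{metheis7}. Proposition \ref{proexheis} states that \eqref{metheis5} admits a compatible purely coclosed $\G_2$-structure exactly when $r=s$, and \eqref{metheis7} admits one exactly when $\frac{1}{r}=\frac{1}{s}+\frac{1}{t}$.

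The conclusion then follows by comparison. For $\gh_5\oplus\R^2$ both conditions coincide (both read $r=s$), so the nilsoliton metric is induced by a purely coclosed $\G_2$-structure. For $\gh_7$, substituting the nilsoliton condition $r=s=t$ into the purely coclosed condition gives $\tfrac{1}{r}=\tfrac{2}{r}$, which is impossible for $r>0$; hence no purely coclosed $\G_2$-structure induces the nilsoliton metric on $\gh_7$. There is no real obstacle in this argument: the whole work has been done in Proposition \ref{proexheis}, and the corollary is a one-line numerical check against the known parameter values of the nilsoliton metrics.
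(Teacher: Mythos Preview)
Your proof is correct and follows exactly the same approach as the paper: identify the nilsoliton parameter values from \cite{Lau} and \cite{FeC}, then compare them against the purely coclosed criterion of Proposition \ref{proexheis}. The paper presents this as a one-paragraph observation preceding the corollary rather than as a formal proof, but the content is identical to what you wrote.
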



\subsection{Case 2: $\dim(\mn')=2$}\label{subn2}
Let $(\mn,g)$ be a 7-dimensional 2-step nilpotent metric Lie algebra with $\dim (\gn')=2$, and suppose that there is a unit vector $x\in \ma$ 
so that $\mn$ splits into the orthogonal direct sum $(\mn,g)=(\tilde\mn,g_{\tilde\mn})\oplus (\lela x\rira,g_{\lela x\rira})$ (cf.~Proposition \ref{decomp}). 
In this case,  the $4$-dimensional orthogonal complement  
of $\gn'\oplus \langle x\rangle$ in $\gn$ is in fact the orthogonal complement $\tilde{\mr}$ of $\tilde\mn'=\mn'$ in $\tilde\mn$. 
Moreover,  orthonormal coframes  of $((\gn')^*,g_{\gn'})$ correspond to orthonormal coframes of $((\tilde\gn')^*,g_{\tilde\gn'})$, and their differentials coincide as $2$-forms  in $\Lambda^2\tilde{\mr}^*$. 

Therefore, by Proposition \ref{dim2}~\ref{dim2ii}), determining the $2$-step nilpotent metric Lie algebras $(\gn,g)$ with $\dim(\gn')=2$ admitting purely coclosed G$_2$-structures inducing $g$ 
is equivalent to determining the 6-dimensional $2$-step nilpotent metric Lie algebras $(\tilde\mn,g_{\tilde\mn})$ admitting an orientation of $\tilde{\mr}$ and an orthogonal coframe $\{\zeta_1,\zeta_2\}$ of $((\tilde\gn')^*,g_{\tilde\gn'})$ 
for which the self-dual parts of their differentials are orthogonal and have equal norms in $\Lambda^2_{\sst+}\tilde{\mr}^*$.

Any  6-dimensional 2-step nilpotent Lie algebra is isomorphic to one of $\mh_3^\bC$, $\mh_3\oplus \mh_3$, $\mn_{6,2}$ and $\mn_{5,2}\oplus \R$. 
The metrics on these Lie algebras were classified, up to automorphism, by Di Scala in \cite{DiSc} (for $\mh_3^\bC$)  and by Reggiani and Vittone in \cite{ReVi} (for the remaining cases). 
We recall their classification results here.

\begin{proposition}[\cite{DiSc,ReVi}]\label{pro:classif62}
Let $\gn$ be a $6$-dimensional $2$-step nilpotent Lie algebra with $\dim (\gn')=2$.  
\begin{enumerate}[$\bullet$]
\item If $\mn=\mh_3^\bC=(0,0,0,0,f^{13}-f^{24},f^{14}+f^{23})$, then any metric on $\mn$ is equivalent to 
\begin{equation}
g=f^1\odot f^1 + r f^2\odot f^2 +f^3\odot f^3+ s f^4\odot f^4 + E  f^5\odot f^5+ 2 F f^5\odot f^6+Gf^6\odot f^6,
\end{equation}
for some $ 0<s \leq r\leq 1$ and  $E,F,G\geq0$ with  $EG-F^2>0$. 
\item If $\gn = \gh_3\oplus\gh_3=(0,0,0,0,f^{12},f^{34})$, then any metric on $\mn$ is equivalent to
\begin{equation}
g=\sum_{i=1}^4 f^i\odot f^i + 2a f^1\odot f^3 +2b f^2\odot f^4+ E  f^5\odot f^5+ 2 F f^5\odot f^6+Gf^6\odot f^6,
\end{equation}
for some $ 0\leq a \leq b  <1$ and $E,F,G\geq0$ with $EG-F^2>0$.
\item If $\gn= \gn_{6,2}=(0,0,0,0,f^{12},f^{14}+f^{23})$, then any metric on $\mn$ is equivalent to
\begin{equation}
g=\sum_{i=1}^3 f^i\odot f^i + r f^4\odot f^4 +  E  f^5\odot f^5+ 2 F f^5\odot f^6+Gf^6\odot f^6,
\end{equation}
for some $ 0<r \leq 1$, and $E,F,G\geq0$ with $EG-F^2>0$. 
\item If $\gn = \gn_{5,2}\oplus \R=(0,0,0,0,f^{12},f^{13})$, then any metric on $\mn$ is equivalent to
\begin{equation}
g=\sum_{i=1}^4 f^i\odot f^i +   E  f^5\odot f^5+ Gf^6\odot f^6,
\end{equation} for some $0<E\leq G$.
\end{enumerate}
Moreover, the metrics belonging to the same family are pairwise non-equivalent for different values of the parameters. 
\end{proposition}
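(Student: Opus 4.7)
The proof plan is a case-by-case analysis over the four isomorphism classes of $6$-dimensional $2$-step nilpotent Lie algebras with $2$-dimensional derived algebra. For each such $\gn$, the goal is to classify positive definite inner products $g$ on $\gn$ modulo the natural action of $\Aut(\gn)$, in which $F$ sends $g$ to $F^*g = g(F\cdot,F\cdot)$; the orbits of this action are exactly the equivalence classes appearing in the statement.

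First, I would determine $\Aut(\gn)$ in each case. Since $\gn'$ is a characteristic ideal, every $F \in \Aut(\gn)$ is block lower-triangular with respect to a fixed vector space splitting $\gn = V \oplus \gn'$ (say, $V$ spanned by $f_1,\ldots,f_4$ in each of the presentations given): it restricts to some $A \in \GL(V)$ on the quotient $\gn/\gn' \cong V$ and some $B \in \GL(\gn')$ on $\gn'$, linked by the compatibility $B([Ax,Ay]) = [x,y]$ for all $x,y \in V$, while the off-diagonal block $V \to \gn'$ is unconstrained. This identifies $\Aut(\gn)$ as a semidirect product of an explicit subgroup of $\GL(V)\times\GL(\gn')$ with the abelian group $\Hom(V,\gn')$.

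Given an arbitrary metric $g$, I would reduce it to the claimed canonical form in three stages: (i) use the additive freedom in the off-diagonal block of $F$ to arrange that the standard $V$ becomes $F^*g$-orthogonal to $\gn'$; (ii) use the $\GL(\gn')$-factor to put $(F^*g)|_{\gn'}$ in the prescribed form (fully diagonal in the $\gn_{5,2}\oplus\bR$ case, and a general positive definite Gram block $E f^5\odot f^5 + 2F f^5\odot f^6 + G f^6\odot f^6$ in the other three cases); (iii) use the subgroup of $\GL(V)$ that is compatible with the given bracket on $V$ to normalize $(F^*g)|_V$. The stated inequalities ($0<s\le r\le 1$, $0\le a\le b<1$, $0<r\le 1$, $0<E\le G$) arise by ordering the eigenvalues of canonical symmetric operators built intrinsically from $g$ and the bracket, so they are invariants of the orbit; this simultaneously proves that distinct parameter values give non-equivalent metrics.

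The main obstacle is the case $\mh_3^\bC$, where the bracket equips $V$ with a $\bC$-linear structure (as $\bC^2$), so that the admissible $A \in \GL(V)$ essentially lies in $\GL(2,\bC)$; classifying a real inner product on $\bC^2$ under this complex-linear action requires the simultaneous diagonalization of $g|_V$ and its $J$-twist $g|_V(J\cdot,J\cdot)$, which is what produces the two parameters $r,s$ with $0<s\le r\le 1$. A parallel but simpler analysis, exploiting the $\bZ/2$-action that swaps the two Heisenberg factors, handles $\mh_3\oplus\mh_3$ and accounts for the cross terms $a f^1\odot f^3$ and $b f^2\odot f^4$; for $\gn_{6,2}$ and $\gn_{5,2}\oplus\bR$ the compatible subgroup of $\GL(V)$ is smaller and the reduction is more direct.
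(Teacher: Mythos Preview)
The paper does not prove this proposition: it is quoted verbatim from \cite{DiSc} (for $\mh_3^\bC$) and \cite{ReVi} (for the remaining three algebras), and no argument is supplied in the present paper. There is therefore no ``paper proof'' to compare your proposal against.

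As for the proposal itself, the strategy you outline (compute $\Aut(\gn)$ in block-triangular form relative to $V\oplus\gn'$, then normalize the metric in stages) is the standard one and is essentially what is carried out in the cited references. Two small points are worth flagging. First, the compatibility condition you wrote is inverted: if $F$ restricts to $A$ on $V$ and to $B$ on $\gn'$, then $B([x,y])=[Ax,Ay]$, not $B([Ax,Ay])=[x,y]$. Second, because $B$ is determined by $A$ through this relation, your step~(ii) (``use the $\GL(\gn')$-factor'') is not an independent move; one must check in each case which transformations of $g|_{\gn'}$ are actually achievable once $A$ has been used to normalize $g|_V$. In the first three cases this is harmless, since the canonical form leaves $g|_{\gn'}$ as a general positive-definite block $(E,F,G)$ and no reduction is performed there; in the $\gn_{5,2}\oplus\bR$ case one has to verify that enough of $\GL(\gn')$ survives to diagonalize and order the two eigenvalues. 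Your outline would go through once these points are made precise.
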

 
Thus, up to automorphism, we may assume that $(\tilde\gn,g_{\tilde{\gn}})$ is one of the metric Lie algebras described in the previous proposition, and using part \ref{dim2ii}) of Proposition \ref{dim2}, 
we can characterize the existence of purely coclosed G$_2$-structures inducing the given metric $g=g_{\tilde{\gn}}+x^\flat\odot x^\flat$ 
on $\gn = \tilde{\gn}\oplus\langle x\rangle$ in terms of the parameters appearing in $g_{\tilde{\gn}}$.

\begin{proposition}\label{p55}
The metric Lie algebra $(\gh_3^\bC\oplus\R,g)$ with $g_{\gh_3^\bC}$ as in Proposition \ref{pro:classif62} admits a purely coclosed $\G_2$-structure $\f$ such that $g_\f=g$  
if and only if one of the following set of conditions hold 
\begin{enumerate}[a)]
\item $r=s=1$ and any choice of $E,F,G\geq 0$ with $EG-F^2>0$; 
\item $0<r=s<1$ with $F=0$ and  $G=E\left(\frac{\sqrt{rs}+1}{\sqrt{r}+\sqrt{s}}\right)^2$;
\item $0< s< r \leq 1$ with $F=0$ and either $G=E\left(\frac{\sqrt{rs}+1}{\sqrt{r}+\sqrt{s}}\right)^2$ or $G=E\left(\frac{\sqrt{rs}-1}{\sqrt{r}-\sqrt{s}}\right)^2$.  
\end{enumerate}
\end{proposition}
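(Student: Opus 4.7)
The plan is to apply Theorem~\ref{th}(\ref{thii}) to the decomposition $(\gn,g) = (\gh_3^\bC, g_{\gh_3^\bC}) \oplus (\langle x\rangle, g_{\langle x\rangle})$, so that $\gn' = \langle f_5,f_6\rangle$ and $\gr = \langle f_1,\ldots,f_4\rangle \oplus \langle x\rangle$. Since $\d f^5 = f^{13}-f^{24}$ and $\d f^6 = f^{14}+f^{23}$ both lie in $\Lambda^2\langle f^1,\ldots,f^4\rangle^*$ and are linearly independent, the common kernel of their contractions with vectors in $\gr$ is exactly $\langle x\rangle$. Consequently, the $4$-dimensional subspace $\tilde\gr\subset\gr$ required by the theorem is forced to be $\tilde\gr = \langle f_1,\ldots,f_4\rangle$. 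The two possible orientations of $\tilde\gr$ will account for the two alternative values of $G$ appearing in case (c).

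The core computation is to determine the self-dual parts of $\d f^5,\d f^6$ in the orthonormal basis $e_1 = f_1$, $e_2 = f_2/\sqrt{r}$, $e_3 = f_3$, $e_4 = f_4/\sqrt{s}$ of $\tilde\gr$. Fixing the orientation $e^{1234}$ and the basis $\sigma_1,\sigma_2,\sigma_3$ of $\Lambda^2_{\sst+}\tilde\gr^*$ given in \eqref{sigma}, a direct expansion yields
\[
(\d f^5)^+ = \tfrac{\sqrt{rs}+1}{2\sqrt{rs}}\,\sigma_1, \qquad (\d f^6)^+ = -\tfrac{\sqrt{r}+\sqrt{s}}{2\sqrt{rs}}\,\sigma_2,
\]
while the anti-self-dual components are proportional to the obvious anti-self-dual partners of $\sigma_1,\sigma_2$ with coefficients $\tfrac{\sqrt{rs}-1}{2\sqrt{rs}}$ and $\tfrac{\sqrt{s}-\sqrt{r}}{2\sqrt{rs}}$. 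The key structural feature is that, because $\sigma_1\perp\sigma_2$, the Gram matrix $\mathcal{G}$ of $((\d f^5)^+, (\d f^6)^+)$ is diagonal, and similarly for the anti-self-dual case.

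Next I would rewrite the criterion of Theorem~\ref{th}(\ref{thii}) as a matrix condition. Writing an arbitrary orthonormal coframe $\{\zeta_1,\zeta_2\}$ of $(\gn')^*$ as $\zeta_i = A_{ik}f^{4+k}$, orthonormality is equivalent to $A^tA = M$, where $M = \bigl(\begin{smallmatrix}E & F \\ F & G\end{smallmatrix}\bigr)$ is the Gram matrix of $g|_{\gn'}$; moreover, the Gram matrix of $((\d\zeta_1)^+,(\d\zeta_2)^+)$ is $S = A\mathcal{G} A^t$. The condition that $S$ be a positive multiple of the identity (equivalent to ``orthogonal and of equal norms'') then reduces algebraically to $\mathcal{G} = \lambda M^{-1}$ for some $\lambda \geq 0$. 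Because $\mathcal{G}$ is diagonal, matching the off-diagonal entries forces $F = 0$ (unless $\mathcal{G}=0$), and matching the diagonal entries yields $G = E\bigl(\tfrac{\sqrt{rs}+1}{\sqrt{r}+\sqrt{s}}\bigr)^2$ for the standard orientation, and the analogous formula $G = E\bigl(\tfrac{\sqrt{rs}-1}{\sqrt{r}-\sqrt{s}}\bigr)^2$ for the opposite orientation (well-posed precisely when $r\neq s$).

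The final step is to assemble the case analysis. For $0<s<r\leq 1$ both equations are genuine, producing case (c). For $0<r=s<1$ the opposite-orientation equation degenerates (one diagonal entry of $\mathcal{G}$ vanishes while the other does not, so $\mathcal{G}$ cannot be proportional to the positive-definite $M^{-1}$), leaving only the standard-orientation equation and hence case (b). The delicate point, where I expect the main attention to be needed, is the boundary regime $r=s=1$: here both anti-self-dual components vanish identically, so with the opposite orientation the criterion is trivially satisfied for \emph{any} admissible $E,F,G$, producing case (a). Reconciling this degenerate vanishing of $\mathcal{G}$ with the generic positive-definite analysis is the only real subtlety of the argument; the remainder is straightforward linear algebra.
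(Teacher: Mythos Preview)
Your argument is correct and follows the same route as the paper: force $\tilde\gr=\langle f_1,\dots,f_4\rangle$, compute the self-dual components of the differentials for both orientations of $\tilde\gr$, and carry out the case analysis on $(r,s)$. The paper differs only in bookkeeping---it fixes a specific Gram--Schmidt orthonormal coframe $\{e^5,e^6\}$ of $(\gn')^*$ and checks the orthogonality/equal-norm criterion directly on $\alpha_5^\pm,\alpha_6^\pm$, rather than using your reformulation $\mathcal{G}=\lambda M^{-1}$, which neatly separates the $(r,s)$-dependence (carried by $\mathcal{G}$) from the $(E,F,G)$-dependence (carried by $M$).
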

\begin{proof}
Starting with the basis $\{f^1,\ldots,f^6\}$ of $\tilde{\gn}^*\coloneqq(\gh_3^\bC)^*$ given in Proposition \ref{pro:classif62}, we obtain the following $g$-orthonormal basis: 
\[
\begin{split}
&e^1 = f^1,\quad e^2 = \sqrt{r}\,f^2,\quad e^3 = f^3,\quad e^4 = \sqrt{s}\,f^4,\\ 
&e^5 = \sqrt{E}\,f^5+\frac{F}{\sqrt{E}}\,f^6,\quad e^6 = \sqrt{\frac{EG-F^2}{E}}\,f^6, 
\end{split}
\]
where $\{e^5,e^6\}$ is an orthonormal coframe of $(\tilde{\gn}')^*$ and $\tilde{\gr}^*=\langle e^1,e^2,e^3,e^4\rangle$.  
Consequently, the expressions of $\alpha_k\coloneqq \d e^k$, for $k=5,6$, are the following: 
\begin{equation}\label{eq:betash3c}
\begin{split}
\alpha_5 &= \sqrt{E}\,e^{13} + \frac{F}{\sqrt{Es}}\,e^{14} + \frac{F}{\sqrt{Er}}\,e^{23} -\sqrt{ \frac{{E}}{{rs}}}\,e^{24},\\
\alpha_6 &= \sqrt{\frac{EG-F^2}{Es}}\,e^{14} + \sqrt{\frac{EG-F^2}{Er}}\,e^{23}. 
\end{split}
\end{equation}

With respect to the orientation $e^{1234}$ of $\tilde\mr^*$ and the oriented coframe $\{e^1,e^2,e^3,e^4\}$, the basis of self-dual 2-forms on $\tilde \mr^*$ introduced in \eqref{sigma} is 
\begin{equation}\label{sigmaSD}
\sigma_1^+ = e^{13}-e^{24},\quad \sigma_2^+ = -e^{14}-e^{23},\quad \sigma_3^+ = e^{12}+e^{34}.
\end{equation}
When the opposite orientation of $\tilde\mr^*$ is considered, the basis \eqref{sigma} with respect to the oriented coframe $\{-e^1,e^2,e^3,e^4\}$ reads
\begin{equation}\label{sigmaASD}
\sigma_1^- = -e^{13}-e^{24},\quad \sigma_2^- = e^{14}-e^{23},\quad \sigma_3^- = -e^{12}+e^{34}. 
\end{equation}

For $k=5,6$, we denote by $\alpha_k^{+}$ and $\alpha_k^-$ the self-dual parts of $\alpha_k$ with respect to the orientation $e^{1234}$ of $\tilde \mr^*$ and its opposite, respectively. 
By using \eqref{eq:betash3c}--\eqref{sigmaASD} we obtain
\[
\alpha_5^+ = B_1^+\,\sigma_1^+ + B_2^+\,\sigma_2^+, \quad \alpha_5^- = B_1^-\,\sigma_1^- + B_2^-\,\sigma_2^-,\quad
\alpha_6^+ = A_2^+\,\sigma_2^+, \quad \alpha_6^- = A_2^-\,\sigma_2^-,
\]
where 
\[
\begin{aligned} &
B_1^+ = \frac12 \frac{\sqrt{E}(\sqrt{rs}+1)}{\sqrt{rs}},\quad 
B_2^+ = -\frac12 \frac{F(\sqrt{r}+\sqrt{s})}{\sqrt{Ers}},\quad 
A_2^+ = -\frac12 \frac{(\sqrt{r}+\sqrt{s})\sqrt{EG-F^2}}{\sqrt{Ers}}, &\\ &
B_1^- = -\frac12 \frac{\sqrt{E}({\sqrt{rs}-1})}{\sqrt{rs}},\quad
B_2^- = \frac12 \frac{F(\sqrt{r}-\sqrt{s})}{\sqrt{Ers}},\quad 
A_2^- = \frac12 \frac{(\sqrt{r}-\sqrt{s})\sqrt{EG-F^2}}{\sqrt{Ers}}.&
\end{aligned}
\]

Therefore, $\alpha_5^+$ and $\alpha_6^+$ are orthogonal with the same norm if and only if 
\[
\begin{cases}
A_2^+ B_2^+ = 0,\\
\left|A_2^+\right| = \sqrt{(B_1^+)^2 + (B_2^+)^2}. 
\end{cases} 
\]
As $r\geq s >0$ and $EG-F^2>0$, the first equation holds if and only if $F=0$. Substituting this value in the second equation gives $G=E\left(\frac{\sqrt{rs}+1}{\sqrt{r}+\sqrt{s}}\right)^2$.

\smallskip 

The self-dual parts of $\alpha_5$ and $\alpha_6$ with respect to the orientation $-e^{1234}$ are orthogonal and have the same norm if and only if 
\[
\begin{dcases}
A_2^- B_2^- = 0,\\
\left|A_2^-\right| = \sqrt{(B_1^-)^2 + (B_2^-)^2}. 
\end{dcases} 
\]
The first equation holds if either $F=0$ and $r\ne s$, or $r=s$. In the former case, we see that the second equation holds if and only if  $G=E\left(\frac{\sqrt{rs}-1}{\sqrt{r}-\sqrt{s}}\right)^2$. 
In the latter case, the second equation holds if and only if $r=s=1$. Notice that $r=s=1$ implies $\alpha_5^-=\alpha_6^-=0$. 
\end{proof}

\begin{proposition} \label{p56}
The metric Lie algebra $(\gh_3\oplus\gh_3\oplus\R,g)$ with $g_{\gh_3\oplus\gh_3}$ as in Proposition \ref{pro:classif62} admits a purely coclosed $\G_2$-structure inducing $g$ if and only if 
$(ab \pm \sqrt{(1-a^2)(1-b^2)})^2<1$, $G=E$ and $F = -E \left(ab \pm \sqrt{(1-a^2)(1-b^2)} \right)$. 
In particular, there are no purely coclosed $\G_2$-structures inducing a metric $g$ for which the decomposition $\gh_3\oplus\gh_3\oplus \bR$ is orthogonal.
\end{proposition}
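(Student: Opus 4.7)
The strategy mirrors that of Proposition \ref{p55}. Starting from the basis $\{f^1,\ldots,f^6\}$ of $(\gh_3\oplus\gh_3)^*$ given in Proposition \ref{pro:classif62}, I would construct an explicit $g$-orthonormal coframe $\{e^1,\ldots,e^6\}$ of $\tilde{\gn}^*$ adapted to the orthogonal splitting $\tilde{\gr}\oplus\tilde{\gn}'$. Since the metric $g_{\gh_3\oplus\gh_3}$ decouples into the blocks $(f_1,f_3)$, $(f_2,f_4)$, and $(f_5,f_6)$, the Gram--Schmidt orthonormalization is carried out in closed form and yields in particular
\[
e^5=\sqrt{E}\,f^5+\tfrac{F}{\sqrt{E}}\,f^6,\qquad e^6=\sqrt{\tfrac{EG-F^2}{E}}\,f^6,
\]
exactly as in the proof of Proposition \ref{p55}, together with explicit expressions for $e^1,\ldots,e^4$ involving $a,b,\sqrt{1-a^2},\sqrt{1-b^2}$.

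The second step computes $\alpha_5\coloneqq \d e^5=\sqrt{E}\,f^{12}+\tfrac{F}{\sqrt{E}}\,f^{34}$ and $\alpha_6\coloneqq \d e^6=\sqrt{\tfrac{EG-F^2}{E}}\,f^{34}$, rewrites $f^{12}$ and $f^{34}$ in the basis $\{e^i\}$, and decomposes the resulting $2$-forms along the bases \eqref{sigmaSD} and \eqref{sigmaASD} of self-dual forms for the two orientations of $\tilde\gr$. The key simplification is that $\alpha_6$ is a scalar multiple of $e^{34}$, so $\alpha_6^+$ is proportional to $\sigma_3^+$ and $\alpha_6^-$ is proportional to $\sigma_3^-$. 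By Proposition \ref{dim2}(\ref{dim2ii}), the orthogonality $\alpha_5^\pm\perp\alpha_6^\pm$ then reduces to the vanishing of the $\sigma_3^\pm$-component of $\alpha_5^\pm$, which after simplification amounts to
\[
F=-E\bigl(ab\pm\sqrt{(1-a^2)(1-b^2)}\bigr),
\]
the upper sign corresponding to the orientation $e^{1234}$ of $\tilde\gr$ and the lower one to its opposite. The subsequent equal-norm condition $|\alpha_5^\pm|=|\alpha_6^\pm|$, combined with the elementary identity
\[
\bigl(ab\pm\sqrt{(1-a^2)(1-b^2)}\bigr)^2+\bigl(a\sqrt{1-b^2}\mp b\sqrt{1-a^2}\bigr)^2=1,
\]
then forces $G=E$. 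The positivity condition $EG-F^2>0$ translates precisely into the requirement $(ab\pm\sqrt{(1-a^2)(1-b^2)})^2<1$.

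For the final assertion, orthogonality of the decomposition $\gh_3\oplus\gh_3\oplus\bR$ is equivalent to $a=b=F=0$. Substituting $a=b=0$ into $F=-E\bigl(ab\pm\sqrt{(1-a^2)(1-b^2)}\bigr)$ gives $F=\mp E\ne 0$, contradicting $F=0$. I expect the main obstacle to be the careful bookkeeping in rewriting $f^{12}$ in the $\{e^i\}$ basis and tracking how the mixed terms in $a,b$ combine in the self-dual projections; once this is done, the rest of the argument is a direct specialization of the approach used in Proposition \ref{p55}.
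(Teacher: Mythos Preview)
Your proposal is correct and follows essentially the same route as the paper's proof: the same Gram--Schmidt orthonormalization, the observation that $\alpha_6$ is a multiple of $e^{34}$ (hence $\alpha_6^\pm$ is proportional to $\sigma_3^\pm$), the orthogonality condition yielding the value of $F$, and the equal-norm condition giving $G=E$. The only cosmetic differences are that the paper writes $\alpha_5,\alpha_6$ directly in the $e^{ij}$ basis rather than passing through $f^{ij}$, and for the final assertion the paper argues via the failure of the inequality $(ab\pm\sqrt{(1-a^2)(1-b^2)})^2<1$ at $a=b=0$ rather than via the contradiction $F=\mp E\ne 0$; both arguments are equivalent.
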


\begin{proof}
Consider the following $g$-orthonormal basis of $\tilde{\gn}^*\coloneqq (\gh_3\oplus\gh_3)^*$ 
\[
\begin{split}
& e^1 = f^1 +a\,f^3,\quad e^2 = f^2 + b\,f^4,\quad e^3 = \sqrt{1-a^2}\,f^3,\quad  e^4 = \sqrt{1-b^2}\,f^4,\\
& e^5 = \sqrt{E}\,f^5 + \frac{F}{\sqrt{E}}\,f^6,\quad e^6 =  \sqrt{\frac{EG-F^2}{E}}\,f^6,
\end{split}
\]
where $\{e^5,e^6\}$ is an orthonormal coframe of $(\tilde{\gn}')^*$ and $\tilde{\gr}^*=\langle e^1,e^2,e^3,e^4\rangle$.  
The expressions of $\alpha_k\coloneqq \d e^k$, for $k=5,6$, are the following: 
\[
\begin{split}
\alpha_5 &= \sqrt{E}\,e^{12} - b \sqrt{\frac{{E}}{{1-b^2}}}\,e^{14} + a\sqrt{\frac{{E}}{{1-a^2}}}\,e^{23} + \frac{abE+F}{\sqrt{E(1-a^2)(1-b^2)}}\, e^{34},\\
\alpha_6 &= \sqrt{\frac{{EG-F^2}}{{E(1-a^2)(1-b^2)}}}\, e^{34}. 
\end{split}
\]

As in the previous proposition, depending on the two possible orientations and the corresponding oriented coframes of $\tilde\gr$, the bases of self-dual forms are given by \eqref{sigmaSD} or \eqref{sigmaASD}. 
From the expressions of $\alpha_5$ and $\alpha_6$, we thus obtain that their self-dual parts in the two cases are
\[
\alpha_5^+ = B_2^+\,\sigma_2^+ + B_3^+\,\sigma_3^+,\quad \alpha_5^- = B_2^-\,\sigma_2^- + B_3^-\,\sigma_3^-,\quad 
\alpha_6^+ = A_3^+\,\sigma_3^+,\quad \alpha_6^- = A_3^-\,\sigma_3^-,
\]
where 
\[
A_3^+ =  \frac12\, \sqrt{\frac{{EG-F^2}}{{E(1-a^2)(1-b^2)}}} = A_3^-,
\]
and
\[
\begin{aligned}
&B_2^+ = \frac12  \frac{\sqrt{E}(b\sqrt{1-a^2}-a\sqrt{1-b^2})}{\sqrt{(1-a^2)(1-b^2)}}, \quad B_3^+ = \frac12  \frac{E(\sqrt{(1-a^2)(1-b^2)}+ab)+F}{\sqrt{E(1-a^2)(1-b^2)}},& \\
&B_2^- = -\frac12  \frac{\sqrt{E}(b\sqrt{1-a^2} +a\sqrt{1-b^2})}{\sqrt{(1-a^2)(1-b^2)}}, \quad B_3^- = -\frac12  \frac{E(\sqrt{(1-a^2)(1-b^2)}-ab)-F}{\sqrt{E(1-a^2)(1-b^2)}}. &
\end{aligned}
\]
Thus, we see that $\alpha_5^+$ and $\alpha_6^+$ are orthogonal and have the same norm if and only if 
\[
\begin{dcases}
A_3^+ B_3^+= 0,\\
\left|A_3^+\right| = \sqrt{(B_2^+)^2+(B_3^+)^2},
\end{dcases} 
\Longleftrightarrow
\begin{dcases}
F+E\left(\sqrt{(1-a^2)(1-b^2)}+ab\right)=0,\\
\sqrt{EG-F^2} = E \left|\left(b\sqrt{1-a^2} - a\sqrt{1-b^2}\right)\right|.
\end{dcases}
\]
Now, the expression of $F$ can be deduced from the first equation. We know that $0\leq a \leq b  <1$. If $b=a$, the right hand side of the second equation would be zero. 
Assuming then $b>a$ and plugging the value of $F$ in the second equation and making some computations gives $G=E$. 
Finally, the condition $EG-F^2>0$ is equivalent to  $(ab+\sqrt{(1-a^2)(1-b^2)})^2<1$. 

\smallskip

If we consider the opposite  orientation of $\tilde\mr^*$, we see that the self-dual parts of $\alpha_5$ and $\alpha_6$ are orthogonal with the same norm if and only if 
\[
\begin{dcases}
A_3^- B_3^-= 0,\\
\left|A_3^-\right| = \sqrt{(B_2^-)^2+(B_3^-)^2},
\end{dcases} 
\Longleftrightarrow
\begin{dcases}
E\left(\sqrt{(1-a^2)(1-b^2)}-ab\right)-F=0,\\
\sqrt{EG-F^2} = E\left(b\sqrt{1-a^2}+a\sqrt{1-b^2}\right),
\end{dcases}
\]
and the thesis follows. 

To conclude the proof, it is sufficient to observe that any metric for which the decomposition $\gh_3\oplus\gh_3\oplus \bR$ is orthogonal must satisfy $a=b=0$. 
This is not possible under either of the constraints $(ab \pm \sqrt{(1-a^2)(1-b^2)})^2<1$. 
\end{proof}

\begin{proposition}\label{p57}
The metric Lie algebra $(\gn_{6,2}\oplus\R,g)$ with $g_{\gn_{6,2}}$ as in Proposition \ref{pro:classif62} admits a purely coclosed $\G_2$-structure $\f$ such that $g_\f=g$  
if and only if $F=0$ and either $G = E\frac{r}{(\sqrt{r}+1)^2}$ and $0< r \leq 1$ or $G = E\frac{r}{(\sqrt{r}-1)^2}$ with $0< r < 1$. 
\end{proposition}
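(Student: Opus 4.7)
The plan is to mimic the strategy used for $\gh_3^\bC\oplus\R$ and $\gh_3\oplus\gh_3\oplus\R$ in Propositions \ref{p55} and \ref{p56}. Since $\gn=\gn_{6,2}\oplus\R$ and the extra $\R$-factor lies in $\ga$, we set $\tilde\gn\coloneqq\gn_{6,2}$, so $\tilde\gn'=\gn'$ and the $4$-dimensional space $\tilde\gr$ to consider is the $g_{\tilde\gn}$-orthogonal complement of $\gn'$ in $\gn_{6,2}$. By part \ref{dim2ii}) of Proposition \ref{dim2}, it suffices to study when, for some orientation of $\tilde\gr$, the self-dual components of the differentials of an orthonormal basis of $(\tilde\gn')^*$ are orthogonal and of equal norm in $\Lambda^2_{\sst+}\tilde\gr^*$.

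First I would construct a $g_{\tilde\gn}$-orthonormal basis $\{e^1,\ldots,e^6\}$ of $\tilde\gn^*$ by Gram--Schmidt applied to the basis $\{f^1,\ldots,f^6\}$ of Proposition \ref{pro:classif62}; the natural choice is $e^i=f^i$ for $i=1,2,3$, $e^4=\sqrt{r}\,f^4$, $e^5=\sqrt{E}\,f^5+\tfrac{F}{\sqrt{E}}\,f^6$ and $e^6=\sqrt{(EG-F^2)/E}\,f^6$. Using the structure equations $\d f^5=f^{12}$ and $\d f^6=f^{14}+f^{23}$, a short computation gives
\[
\alpha_5=\d e^5=\sqrt{E}\,e^{12}+\frac{F}{\sqrt{Er}}\,e^{14}+\frac{F}{\sqrt{E}}\,e^{23},\qquad
\alpha_6=\d e^6=\sqrt{\frac{EG-F^2}{Er}}\,e^{14}+\sqrt{\frac{EG-F^2}{E}}\,e^{23}.
\]

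Next, for the orientation $e^{1234}$ with the self-dual basis $\{\sigma_1^+,\sigma_2^+,\sigma_3^+\}$ in \eqref{sigmaSD}, decomposing $\alpha_5$ and $\alpha_6$ into self-dual parts yields
\[
\alpha_5^+=-\frac{F(1+\sqrt{r})}{2\sqrt{Er}}\,\sigma_2^+ +\frac{\sqrt{E}}{2}\,\sigma_3^+,\qquad
\alpha_6^+=-\frac{(1+\sqrt{r})\sqrt{EG-F^2}}{2\sqrt{Er}}\,\sigma_2^+.
\]
The orthogonality condition $g(\alpha_5^+,\alpha_6^+)=0$ forces $F=0$ (since $E,r,EG-F^2>0$ and $1+\sqrt r>0$), and then $|\alpha_5^+|^2=|\alpha_6^+|^2$ reduces to $E=(1+\sqrt r)^2G/r$, i.e.\ $G=Er/(\sqrt r+1)^2$ with $0<r\leq 1$, which is the first branch of the statement. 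For the opposite orientation, using the basis $\{\sigma_1^-,\sigma_2^-,\sigma_3^-\}$ in \eqref{sigmaASD} one finds
\[
\alpha_5^-=\frac{F(1-\sqrt{r})}{2\sqrt{Er}}\,\sigma_2^- -\frac{\sqrt{E}}{2}\,\sigma_3^-,\qquad
\alpha_6^-=\frac{(1-\sqrt{r})\sqrt{EG-F^2}}{2\sqrt{Er}}\,\sigma_2^-.
\]
Orthogonality now gives $F(1-\sqrt r)^2\sqrt{EG-F^2}=0$; the case $r=1$ leads to $\alpha_6^-=0$ and fails the equal-norm condition, so again $F=0$, and the equal-norm condition becomes $G=Er/(\sqrt r-1)^2$ with $0<r<1$. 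This yields exactly the second branch, completing the proof.

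The only non-routine point is keeping track of which orientation produces which branch; no substantial obstacle is expected, since once the orthonormal basis is chosen the argument reduces to a direct calculation parallel to those in Propositions \ref{p55} and \ref{p56}.
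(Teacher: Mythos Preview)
Your proposal is correct and follows essentially the same approach as the paper's proof: the same Gram--Schmidt orthonormal basis, the same expressions for $\alpha_5,\alpha_6$, the same self-dual decompositions for both orientations, and the same case analysis leading to the two branches. The only cosmetic difference is in how the degenerate case $r=1$ for the second orientation is phrased (you note $\alpha_6^-=0$ fails the equal-norm condition; the paper equivalently deduces $E=0$, a contradiction).
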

\begin{proof}
Consider the following $g$-orthonormal basis of $\tilde{\gn}^*\coloneqq (\gn_{6,2})^*$ 
\[
\begin{split}
& e^1 = f^1,\quad  e^2 = f^2,\quad e^3 = f^3,\quad e^4 = \sqrt{r}\,f^4,\\
& e^5 = \sqrt{E}\,f^5 + \frac{F}{\sqrt{E}}\,f^6,\quad e^6 =  \sqrt{\frac{EG-F^2}{E}}\,f^6. 
\end{split}
\]
The pair $\{e^5,e^6\}$ is an orthonormal coframe of $(\tilde{\gn}')^*$ and $\tilde{\gr}^*=\langle e^1,e^2,e^3,e^4\rangle$.    
The expressions of $\alpha_k\coloneqq \d e^k$, for $k=5,6$, are the following: 
\[
\alpha_5 = \sqrt{E}\,e^{12} + \frac{F}{\sqrt{Er}}\,e^{14} + \frac{F}{\sqrt{E}}\,e^{23},\quad
\alpha_6 = \sqrt{\frac{EG-F^2}{Er}}\,e^{14} + \sqrt{\frac{EG-F^2}{E}}\,e^{23}. 
\]

The self-dual parts of $\alpha_5$ and $\alpha_6$ with respect to the orientations $e^{1234}$ and $-e^{1234}$ of $\tilde\gr$, 
and the corresponding oriented bases \eqref{sigmaSD} and \eqref{sigmaASD}, are the following 
\[
\alpha_5^+ = B_2^+\,\sigma_2^+ + B_3^+\,\sigma_3^+, \quad \alpha_5^- = B_2^-\,\sigma_2^- + B_3^-\,\sigma_3^-,\quad
\alpha_6^+ = A_2^+\,\sigma_2^+, \quad \alpha_6^- = A_2^-\,\sigma_2^-,
\]
where 
\[
\begin{aligned} &
B_2^+ = -\frac12 \frac{F(\sqrt{r}+1)}{\sqrt{Er}},\quad 
B_3^+ = \frac12 \sqrt{E},\quad 
A_2^+ = -\frac12 \frac{(\sqrt{r}+1)\sqrt{EG-F^2}}{\sqrt{Er}}, &\\ &
B_2^- = -\frac12 \frac{F(\sqrt{r}-1)}{\sqrt{Er}},\quad 
B_3^- = -\frac12\sqrt{E},\quad
A_2^- = -\frac12 \frac{(\sqrt{r}-1)\sqrt{EG-F^2}}{\sqrt{Er}}.&
\end{aligned}
\]
 
Now, $\alpha_5^+$ and $\alpha_6^+$ are orthogonal with the same length if and only if  
\[
\begin{cases}
A_2^+ B_2^+ = 0,\\
\left|A_2^+\right| = \sqrt{(B_2^+)^2+(B_3^+)^2}. 
\end{cases} 
\]
Since $0 < r \leq 1$, the first equation gives $F=0$. From the second equation we then get $G = E\frac{r}{(\sqrt{r}+1)^2}$. 
 
The forms $\alpha_5^-$ and $\alpha_6^-$ are orthogonal with the same length if and only if  
\[
\begin{cases}
A_2^- B_2^- = 0,\\
\left|A_2^-\right| = \sqrt{(B_2^-)^2+(B_3^-)^2}. 
\end{cases} 
\]
The first equation is satisfied if either $F=0$ or $r=1$. 
In the first case, solving the system we obtain $r\neq1$ and $G = E\frac{r}{(\sqrt{r}-1)^2}$. In the second case, we get $E=0$, a contradiction.
\end{proof}

\begin{proposition}\label{p58}
The metric Lie algebra $(\gn_{5,2}\oplus\bR^2,g)$ with $g_{\gn_{5,2}\oplus\bR}$ as in Proposition \ref{pro:classif62} admits a purely coclosed $\G_2$-structure inducing the metric $g$ 
if and only if $G=E$. 
\end{proposition}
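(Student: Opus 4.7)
The plan is to apply the criterion of Proposition \ref{dim2}\ref{dim2ii} to the $6$-dimensional summand $(\tilde{\gn},g_{\tilde{\gn}}) = (\gn_{5,2}\oplus\R,g_{\gn_{5,2}\oplus\R})$ with structure equations $(0,0,0,0,f^{12},f^{13})$, the extra $\R$ factor in $\gn = \tilde{\gn}\oplus\langle x\rangle$ playing the role of $\ga$. Because the reference metric is already diagonal, the computation is significantly simpler than in Propositions \ref{p55}--\ref{p57}.

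First I would rescale to build a $g$-orthonormal coframe of $\tilde{\gn}^*$ by setting $e^i = f^i$ for $i=1,\ldots,4$, $e^5 = \sqrt{E}\,f^5$ and $e^6 = \sqrt{G}\,f^6$. Then $\{e^5,e^6\}$ is an orthonormal coframe of $(\tilde{\gn}')^*$, $\tilde{\gr}^*=\langle e^1,\ldots,e^4\rangle$, and the structure equations give at once
\[
\alpha_5 \coloneqq \d e^5 = \sqrt{E}\,e^{12},\qquad \alpha_6 \coloneqq \d e^6 = \sqrt{G}\,e^{13}.
\]
In particular $\d(\tilde{\gn}')^*\subset \Lambda^2\tilde{\gr}^*$, which is the prerequisite needed to invoke the criterion.

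Next, fixing the orientation $e^{1234}$ on $\tilde{\gr}$ and decomposing $e^{12}$ and $e^{13}$ via the self-dual basis \eqref{sigmaSD}, I would find
\[
\alpha_5^+ = \frac{\sqrt{E}}{2}\,\sigma_3^+,\qquad \alpha_6^+ = \frac{\sqrt{G}}{2}\,\sigma_1^+.
\]
Since $\sigma_1^+$ and $\sigma_3^+$ are already mutually orthogonal and of norm $\sqrt{2}$, the forms $\alpha_5^+$ and $\alpha_6^+$ are automatically orthogonal, and the equal-norm condition reduces to $E=G$. The analogous computation for the opposite orientation using \eqref{sigmaASD} yields $\alpha_5^- = -\frac{\sqrt{E}}{2}\,\sigma_3^-$ and $\alpha_6^- = -\frac{\sqrt{G}}{2}\,\sigma_1^-$, producing exactly the same condition. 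Invoking Proposition \ref{dim2}\ref{dim2ii} then delivers the stated equivalence.

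The proof is essentially obstacle-free: the structure equations are so sparse that $\alpha_5$ and $\alpha_6$ each lie along a single self-dual direction, so ``orthogonality plus equal norms'' collapses to the single equality $E=G$. The only point deserving a brief remark is that Proposition \ref{dim2}\ref{dim2ii} demands the condition for \emph{every} orthonormal coframe of $(\tilde{\gn}')^*$, whereas I would verify it only on the natural coframe $\{e^5,e^6\}$; this suffices because the $\mathrm{O}(2)$-action on orthonormal coframes of $(\tilde{\gn}')^*$ preserves both mutual orthogonality and equality of norms of the associated self-dual components.
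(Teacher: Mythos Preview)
Your proof is correct and follows essentially the same approach as the paper: the same orthonormal coframe, the same computation of $\alpha_5^\pm$ and $\alpha_6^\pm$ with respect to both orientations, and the same reduction to $E=G$. Your closing remark about the $\mathrm{O}(2)$-invariance of the orthogonality-plus-equal-norms condition is a nice explicit justification of a point the paper leaves implicit (it was addressed once and for all in the proof of Proposition~\ref{dim2}).
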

\begin{proof}
We choose the following $g$-orthonormal basis of $\tilde{\gn}^* \coloneqq (\gn_{5,2}\oplus\bR)^*$
\[
e^1 = f^1,\quad e^2 = f^2,\quad e^3 = f^3,\quad e^4 = f^4,\quad e^5  =\sqrt{E}\,f^5, \quad  e^6 = \sqrt{G}\,f^6,
\]
where $\{e^5,e^6\}$ is an orthonormal coframe of $(\tilde{\gn}')^*$ and $\tilde{\gr}^*=\langle e^1,e^2,e^3,e^4\rangle$.  Then, 
\[
\alpha_5 = \sqrt{E}\,e^{12},\quad \alpha_6 = \sqrt{G}\,e^{13}.
\]

Depending on the two possible orientations of $\tilde\gr$, we see that the self-dual parts of $\alpha_5$ and $\alpha_6$, with respect to the bases given by \eqref{sigmaSD} and \eqref{sigmaASD}, are 
\[
\alpha_5^+ = \frac{\sqrt{E}}{2}\,\sigma_3^+,\quad \alpha_5^- = -\frac{\sqrt{E}}{2}\,\sigma_3^-,\quad \alpha_6^+ = \frac{\sqrt{G}}{2}\,\sigma_1^+,\quad \alpha_6^- = -\frac{\sqrt{G}}{2}\, \sigma_1^-. 
\]
Both $\alpha_5^+,\alpha_6^+$ and $\alpha_5^-,\alpha_6^-$ are orthogonal. Moreover, they have the same norm if and only if $G=E$. 
\end{proof} 

By \cite[Thm.~3.1]{Wil},  the nilsoliton metrics correspond (up to automorphism and scaling) to the following values of the parameters in Proposition \ref{pro:classif62}: 
\begin{enumerate}[$\bullet$]
\item $\gh_3^{\bC}$: $r=s=E=G=1$, $F=0$;
\item $\gh_3\oplus\gh_3$: $a=b=F=0$, $E=G=1$;
\item $\gn_{6,2}$: $r=E=G=1$, $F=0$;
\item $\gn_{5,2}\oplus\R$: $E=G=1$. 
\end{enumerate}
Consequently, we have the following.
\begin{corollary}\label{ricci2}
On the $2$-step nilpotent Lie algebras $\gh_3^{\bC}\oplus\R$ and $\gn_{5,2}\oplus\R^2$ there exist purely coclosed $\G_2$-structures inducing a nilsoliton metric, while the Lie algebras 
$\gh_3\oplus\gh_3\oplus\R$ and $\gn_{6,2}\oplus\R$ do not admit any such structure. 
\end{corollary}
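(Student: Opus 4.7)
The plan is to invoke the four metric-level classification results of this section, namely Propositions \ref{p55}, \ref{p56}, \ref{p57} and \ref{p58}, and simply substitute into them the explicit nilsoliton parameters recalled in the bulleted list just above the statement. Since each of those propositions gives necessary and sufficient conditions on the metric parameters $(r,s,a,b,E,F,G)$ for the existence of a purely coclosed $\G_2$-structure inducing the given metric, the corollary reduces to four one-line verifications. The only judgement call is to make sure the nilsoliton values really correspond (up to automorphism and scaling) to points in the parameter space of Proposition \ref{pro:classif62}, which is guaranteed by the cited theorem of \cite{Wil}.

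First, for $\gh_3^\bC\oplus\R$ the nilsoliton is $r=s=E=G=1$, $F=0$. This sits squarely inside case a) of Proposition \ref{p55} (which allows any $E,F,G$ with $EG-F^2>0$ once $r=s=1$), so a purely coclosed $\G_2$-structure inducing it exists. Second, for $\gn_{5,2}\oplus\R^2$ the nilsoliton is $E=G=1$, and Proposition \ref{p58} asserts existence iff $G=E$, which is satisfied; thus again we obtain a purely coclosed structure.

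For the two negative assertions we check that the nilsoliton values violate the necessary conditions of the respective propositions. For $\gh_3\oplus\gh_3\oplus\R$ the nilsoliton has $a=b=F=0$ and $E=G=1$; Proposition \ref{p56} demands
\[
F \;=\; -E\bigl(ab \pm \sqrt{(1-a^2)(1-b^2)}\bigr),
\]
which with these values gives $F=\mp 1\neq 0$, a contradiction (equivalently, the strict inequality $(ab\pm\sqrt{(1-a^2)(1-b^2)})^2<1$ fails, since it reduces to $1<1$). For $\gn_{6,2}\oplus\R$ the nilsoliton has $r=E=G=1$, $F=0$; Proposition \ref{p57} requires either $G=E\,r/(\sqrt r+1)^2$, giving $G=1/4\neq 1$, or $G=E\,r/(\sqrt r-1)^2$ with $0<r<1$, which is excluded by $r=1$. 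Hence in both cases no purely coclosed $\G_2$-structure can induce the nilsoliton metric.

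There is no real obstacle: the heavy lifting has already been done in Propositions \ref{p55}--\ref{p58}. The only point worth double-checking is that one keeps track of \emph{both} choices of orientation on $\tilde{\mr}$ in Proposition \ref{p56} and Proposition \ref{p57} (this is exactly what produces the $\pm$ sign and the second alternative $G=E\,r/(\sqrt r-1)^2$, respectively), so that the non-existence assertions are genuinely exhaustive and not just failures of one orientation.
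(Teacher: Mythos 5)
Your proposal is correct and is exactly the paper's argument: the paper also obtains Corollary \ref{ricci2} by plugging the nilsoliton parameter values from \cite{Wil} into the characterizations of Propositions \ref{p55}--\ref{p58}, and your four case checks (including the observation that for $\gh_3\oplus\gh_3\oplus\R$ the nilsoliton has $a=b=0$, which is precisely the orthogonal-decomposition case excluded in Proposition \ref{p56}) match the intended verifications. Nothing to add.
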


We conclude this subsection by giving explicit examples of purely coclosed G$_2$-structures on each of the 7-dimensional decomposable 2-step nilpotent Lie algebras with 2-dimensional derived algebra. 
The given structures satisfy the criteria obtained in Propositions \ref{p55}, \ref{p56},  \ref{p57} and \ref{p58}.

\begin{example} 
On the Lie algebras listed below, the $\G_2$-structures induced by the following coframes are purely coclosed:
\begin{enumerate}[$\bullet$]
\item $\gh_3^{\bC}\oplus\bR =  \left(0,0,0,0,f^{13}-f^{24},f^{14}+f^{23},0\right)$ with the coframe
\[
\left\{f^1,~ \sqrt{r}\,f^2,~ f^3,~ \sqrt{s} f^4,~ \sqrt{E}\, f^5,~ \sqrt{E}\frac{\sqrt{rs}+1}{\sqrt{r}+\sqrt{s}}\,f^6,~ f^7\right\},
\]
for any $0< s < r \leq 1$ and $E>0$.
\item $\gh_3\oplus\gh_3\oplus\bR = \left(0,0,0,0,f^{12},f^{34},0\right)$  with the coframe
\[
\left\{f^1 +a\,f^3,~ -\sqrt{1-a^2}\,f^3,~ f^2 + b\,f^4,~ \sqrt{1-b^2}\,f^4,~  \sqrt{E}\,f^5 + \frac{F}{\sqrt{E}}\,f^6,~  \sqrt{\frac{E^2-F^2}{E}}\,f^6,~ f^7\right\},
\]
where $(ab + \sqrt{(1-a^2)(1-b^2)})^2<1$, and $F = -E \left(ab + \sqrt{(1-a^2)(1-b^2)} \right)$. 
\item $\gn_{6,2}\oplus\bR	= \left(0,0,0,0,f^{12},f^{14}+f^{23},0\right)$  with the coframe
\[
\left\{f^1,~ -f^3,~ f^2,~ \sqrt{r}\,f^4,~{\frac{\sqrt{Er}}{\sqrt{r}+1}}\,f^6,-\sqrt{E}\,f^5,f^7 \right\}, 
\]
for any $0<r\leq 1$ and $E>0$.
\item $\gn_{5,2}\oplus\bR^2 = \left(0,0,0,0,f^{12},f^{13},0\right)$  with the coframe
\[
\left\{f^1,~ f^4,~ f^3,~ f^2,~ \sqrt{E}\,f^6,~ \sqrt{E}\,f^5,~f^7 \right\}, 
\]
with $E>0$. 
\end{enumerate}
In each case, the fact that the induced $\G_2$-structure is purely coclosed, follows from Corollary \ref{pro:n2cocl}. 
\end{example}


\subsection{Case 3: $\dim(\gn')=3$} \label{subn3} 
Currently, no classification of the equivalence classes of metrics on 7-dimensional 2-step nilpotent Lie algebras with 3-dimensional derived algebra is available. 
We will therefore restrict ourselves to constructing, on each such algebra,  a purely coclosed G$_2$-structure, as well as a metric which is not compatible with any purely coclosed G$_2$-structure. 
For the existence part we will use Lemma \ref{lsa}, and for the non-existence part we will apply Proposition \ref{p3}.

Let $\gn$ be a 7-dimensional 2-step nilpotent Lie algebra with 3-dimensional derived algebra $\gn'$. 
We recall the following notation used in Section \ref{subsec:case3}. Assume that $\mathcal{B}=\{e^1,\ldots,e^7\}$ is a coframe of $\gn^*$ such that $e^1, e^2,e^3, e^4$ vanish on $\gn'$. 
We denote by $g$ the metric in which this coframe is orthonormal and by $\f$ the G$_2$-structure induced by $\mathcal{B}$ via \eqref{G2adapted}. 
If $\{e_1,\ldots,e_7\}$ denotes the basis of $\gn$ dual to $\mathcal{B}$, then $\gr = \langle e_1,e_2,e_3,e_4\rangle$ and $\gn'=\langle e_5,e_6,e_7\rangle$.  
In particular, $\gn'$ is calibrated by $\f$ by construction. 

We let $\alpha_i \coloneqq \d e^{i+4}\in \Lambda^2\gr^*$, for $i=1,2,3,$ and we define the $3\times 3$ matrices $M^\mathcal{B}$ and $S^\mathcal{B}_\pm$  
with coefficients $(M^\mathcal{B})_{ij} \coloneqq g(\sigma_i,\alpha_j)$ and $(S^\mathcal{B}_\pm)_{ij} \coloneqq g(\alpha^\pm_i,\alpha^\pm_j)$, where 
\[
\sigma_1 = e^{13}-e^{24},\quad \sigma_2 = -(e^{14}+e^{23}),\quad \sigma_3 =e^{12}+e^{34},
\]
and $\alpha^+_i$ and $\alpha^-_i$ denote the self-dual and anti-self-dual parts of $\alpha_i$ with respect to the metric and orientation of $\gr$ for which $\{e^1,e^2,e^3,e^4\}$ is an 
oriented orthonormal coframe. 
Notice that the self-dual forms with respect to the opposite orientation $-e^{1234}$ of $\gr$ are the anti-self-dual forms with respect to the orientation $e^{1234}$. 

By Lemma \ref{lsa}, $\f$ is purely coclosed if and only if $M^\mathcal{B}$ is symmetric and trace-free. 
Moreover, from Proposition \ref{p3} we know that if 
\begin{equation}\label{in}
\tr^2\left( S^\mathcal{B}_+\right)\ne 2\tr\left((S^\mathcal{B}_+)^2\right),\qquad\hbox{and}\qquad\tr^2\left( S^\mathcal{B}_-\right)\ne 2\tr\left( (S^\mathcal{B}_-)^2\right),
\end{equation}
then $g$ is not compatible with any purely coclosed G$_2$-structure on $\gn$ calibrating $\mn'$. Even more, by Lemma \ref{lcal}, $g$ is not compatible with any purely coclosed G$_2$-structure on $\gn$.

\smallskip

In what follows, for each 7-dimensional 2-step nilpotent Lie algebra $\mn$ with $\dim(\mn')=3$ we will give an example of basis $\mathcal{B}$ such that the metric making it orthonormal is not compatible 
with any purely coclosed G$_2$-structure, and a basis $\mathcal{C}$ inducing a purely coclosed G$_2$-structure. We will explicit the computations in the first case, and sketch the remaining cases. 
\subsubsection{$\gn_{6,3}\oplus\bR	=\left(0,0,0,0,f^{12},f^{13},f^{23}\right)$}

With respect to the coframe $\mathcal{B}=\{f^1,\ldots,f^7\}$, we have 
\[
\alpha_1=\d f^5=f^{12},\quad \alpha_2=\d f^6=f^{13},\quad\alpha_3=\d f^7=f^{23},
\]
whence 
\[
\alpha_1^\pm=\tfrac12(f^{12}\pm f^{34}),\quad \alpha_2^\pm=\tfrac12(f^{13}\mp f^{24}),\quad\alpha_3^\pm=\tfrac12(f^{14}\pm f^{23}).
\]
Consequently  $S^\mathcal{B}_+= S^\mathcal{B}_-=\tfrac12 \mathrm{I}_3$, so the inequalities \eqref{in} hold, showing that the metric for which $\mathcal{B}$ is orthonormal is not compatible 
with any purely coclosed G$_2$-structure.

Consider now the coframe $\mathcal{C}=\{f^1,f^2,f^3,f^4,f^6,2f^7,f^5\}$. With respect to this coframe, we have 
\[
\alpha_1=\d f^6=f^{13},\quad \alpha_2=\d (2f^7)=2f^{23},\quad\alpha_3=\d f^5=f^{12},
\]
showing that
\[
M^\mathcal{C}=
\begin{psmallmatrix}
1&\phantom{-}0&\phantom{-}0\\ \noalign{\medskip}
0&-2&\phantom{-}0\\ \noalign{\medskip}
0&\phantom{-}0&\phantom{-}1
\end{psmallmatrix}
\]
is symmetric and trace-free. Thus the G$_2$-structure induced by $\mathcal{C}$ is purely coclosed.

\subsubsection{$\gn_{7,3,A} = \left(0,0,0,0,f^{12},f^{23},f^{24}\right)$}

With respect to the coframes $\mathcal{B}=\{f^1,\ldots,f^7\}$ and $\mathcal{C}=\{f^1,f^2,f^3,f^4,f^7,f^6,2f^5\}$, we have $S^\mathcal{B}_+= S^\mathcal{B}_-=\tfrac12 \mathrm{I}_3$ and 
\[
M^\mathcal{C}=\begin{psmallmatrix}
-1&\phantom{-}0&\phantom{-}0\\ \noalign{\medskip}
\phantom{-}0&-1&\phantom{-}0\\ \noalign{\medskip}
\phantom{-}0&\phantom{-}0&\phantom{-}2
\end{psmallmatrix}.
\]

Therefore, the metric making $\mathcal{B}$ orthonormal is not compatible with any purely coclosed G$_2$-structure, whereas the G$_2$-structure induced by $\mathcal{C}$ is purely coclosed.

\subsubsection{$\gn_{7,3,B}= \left(0,0,0,0,f^{12},f^{23},f^{34}\right)$}

With respect to the coframes 
\[\mathcal{B}=\left\{f^1,f^2,f^3,f^4,f^5,\frac{1}{\sqrt{2}}\,f^6,f^7\right\}\quad \mbox{ and }\quad \mathcal{C}=\{f^1,f^2,f^3,f^4,f^5-f^7,2f^6,f^5+f^7\},
\] we have 
\[
S^\mathcal{B}_+= \tfrac14 \begin{psmallmatrix}
2&0&2\\ \noalign{\medskip}
0&1&0\\ \noalign{\medskip}
2&0&2
\end{psmallmatrix},\qquad S^\mathcal{B}_-=\tfrac14 \begin{psmallmatrix}
\phantom{-}2&\phantom{-}0&-2\\ \noalign{\medskip}
\phantom{-}0&\phantom{-}1&\phantom{-}0\\ \noalign{\medskip}
-2&\phantom{-}0&\phantom{-}2
\end{psmallmatrix},\qquad M^\mathcal{C}=\begin{psmallmatrix}
0&\phantom{-}0&\phantom{-}0\\ \noalign{\medskip}
0&-2&\phantom{-}0\\ \noalign{\medskip}
0&\phantom{-}0&\phantom{-}2
\end{psmallmatrix},
\]
and one easily checks that 
\[
\tfrac{25}{16}=\tr^2\left( S^\mathcal{B}_\pm\right)\ne 2\tr\left( (S^\mathcal{B}_\pm)^2\right)=\tfrac{17}{8}. 
\]
Thus, the metric making $\mathcal{B}$ orthonormal is not compatible with any purely coclosed G$_2$-structure, whereas the G$_2$-structure induced by $\mathcal{C}$ is purely coclosed. 
Notice that the coframe $\{f^1, \ldots, f^7\}$ is also of the same type as $\mcB$, i.e., the metric making it orthonormal is not compatible with any purely coclosed $\G_2$-structure. 
The interesting property of the coframe $\mcB$ given above is that it is orthonormal with respect to a nilsoliton metric on $\gn_{7,3,B}$ (see Sect.~\ref{nils3}). 

\subsubsection{$\gn_{7,3,B_1}	= \left(0,0,0,0,f^{12}-f^{34},f^{13}+f^{24},f^{14}\right)$}

Consider the coframes $\mathcal{B}=\{f^1,\ldots,f^7\}$ and $\mathcal{C}=\{-f^1,f^2,f^3,f^4,f^6,4f^7,f^5\}$. We compute
\[
S^\mathcal{B}_+= \tfrac12 \begin{psmallmatrix}
0&0&0\\ \noalign{\medskip}
0&0&0\\ \noalign{\medskip}
0&0&1
\end{psmallmatrix},\qquad S^\mathcal{B}_-=\tfrac12 \begin{psmallmatrix}
4&0&0\\ \noalign{\medskip}
0&4&0\\ \noalign{\medskip}
0&0&1
\end{psmallmatrix},\qquad M^\mathcal{C}=\begin{psmallmatrix}
-2&\phantom{-}0&\phantom{-}0\\ \noalign{\medskip}
\phantom{-}0&\phantom{-}4&\phantom{-}0\\ \noalign{\medskip}
\phantom{-}0&\phantom{-}0&-2
\end{psmallmatrix},
\]
and we get
\[
\tfrac 14=\tr^2\left( S^\mathcal{B}_+\right)\ne 2\tr\left( (S^\mathcal{B}_+)^2\right)=\tfrac12,\qquad\hbox{and}\qquad\tfrac{25}4=\tr^2\left( S^\mathcal{B}_-\right)\ne 2\tr\left( (S^\mathcal{B}_-)^2\right)=\tfrac{17}2.
\]
Again, this shows that the metric making $\mathcal{B}$ orthonormal is not compatible with any purely coclosed G$_2$-structure, and that the G$_2$-structure induced by $\mathcal{C}$ is purely coclosed.

\subsubsection{$\gn_{7,3,C} = \left(0,0,0,0,f^{12}+f^{34},f^{23},f^{24}\right)$}\label{73C}

In contrast to the previous cases, the metric making $\{f^1,\dots,f^7\}$ orthonormal turns out to be compatible with a purely coclosed G$_2$-structure. 
To see this, consider the coframes $\mathcal{B}=\{f^1,f^2,f^3,f^4,f^5,f^6,2f^7\}$ 
and $\mathcal{C}=\{f^1,f^2,f^3,f^4,f^7,f^6,f^5\}$. Then, we have
\[
S^\mathcal{B}_+= \tfrac12 \begin{psmallmatrix}
4&0&0\\ \noalign{\medskip}
0&1&0\\ \noalign{\medskip}
0&0&2
\end{psmallmatrix},\qquad S^\mathcal{B}_-=\tfrac12 \begin{psmallmatrix}
0&0&0\\ \noalign{\medskip}
0&1&0\\ \noalign{\medskip}
0&0&2
\end{psmallmatrix},\qquad M^\mathcal{C}=\begin{psmallmatrix}
-1&\phantom{-}0&\phantom{-}0\\ \noalign{\medskip}
\phantom{-}0&-1&\phantom{-}0\\ \noalign{\medskip}
\phantom{-}0&\phantom{-}0&\phantom{-}2
\end{psmallmatrix}.
\]
So
\[
\tfrac{49}4=\tr^2\left( S^\mathcal{B}_+\right)\ne 2\tr\left( (S^\mathcal{B}_+)^2\right)=\tfrac{21}2,\qquad\hbox{and}\qquad \tfrac94=\tr^2\left( S^\mathcal{B}_-\right)\ne 2\tr\left( (S^\mathcal{B}_-)^2\right)=\tfrac92,
\]
showing that the metric making $\mathcal{B}$ orthonormal is not compatible with any purely coclosed G$_2$-structure, whereas the G$_2$-structure induced by $\mathcal{C}$ is purely coclosed.

\subsubsection{$\gn_{7,3,D} = \left(0,0,0,0,f^{12}+f^{34},f^{13},f^{24}\right)$}

With respect to the coframes $\mathcal{B}=\{f^1,\ldots,f^7\}$ and $\mathcal{C}=\{f^1,f^2,f^3,f^4,\tfrac1{\sqrt{2}}(f^7-f^6),\tfrac1{\sqrt{2}}(f^7+f^6),\tfrac1{\sqrt{2}}f^5\}$ we have 
\[
S^\mathcal{B}_+= \tfrac12 \begin{psmallmatrix}
4&\phantom{-}0&\phantom{-}0\\ \noalign{\medskip}
0&\phantom{-}1&-1\\ \noalign{\medskip}
0&-1&\phantom{-}1
\end{psmallmatrix},\qquad S^\mathcal{B}_-=\tfrac12 \begin{psmallmatrix}
0&0&0\\ \noalign{\medskip}
0&1&1\\ \noalign{\medskip}
0&1&1
\end{psmallmatrix},\qquad M^\mathcal{C}=\sqrt2\,\begin{psmallmatrix}
-1&0&0\\ \noalign{\medskip}
\phantom{-}0&0&0\\ \noalign{\medskip}
\phantom{-}0&0&1
\end{psmallmatrix},
\]
and we get
\[
9=\tr^2\left( S^\mathcal{B}_+\right)\ne 2\tr\left( (S^\mathcal{B}_+)^2\right)=10,\qquad\hbox{and}\qquad 1=\tr^2\left( S^\mathcal{B}_-\right)\ne 2\tr\left( (S^\mathcal{B}_-)^2\right)=2. 
\]
Thus, the metric making $\mathcal{B}$ orthonormal is not compatible with any purely coclosed G$_2$-structure, whereas the G$_2$-structure induced by $\mathcal{C}$ is purely coclosed. 
Here again, the reason for the choice of the coframe $\mathcal C$ is that it is orthonormal with respect to a nilsoliton metric on $\gn_{7,3,D}$ (see Sect.~\ref{nils3}). 

\subsubsection{$\gn_{7,3,D_1}	= \left(0,0,0,0,f^{12}-f^{34},f^{13}+f^{24},f^{14}-f^{23}\right)$} 
Also in this case, the standard metric turns out to be compatible with a purely coclosed G$_2$-structure. 
However, in order to construct a metric not compatible with any purely coclosed G$_2$-structure we need to modify the standard one in some $\gr$ direction, 
in contrast to \ref{73C} where the modification was only necessary on $\gn'$. This makes the computation slightly more tricky, since the self-dual and anti-self-dual parts are different from the standard case.

In detail, with respect to the coframe $\mathcal{C}=\{f^1,\ldots,f^7\}$, the matrix $M^\mathcal{C}$ vanishes, 
so it is trivially symmetric and trace-free, showing that the G$_2$-structure induced by $\mathcal{C}$ is purely coclosed.

Consider now the coframe $\mathcal{B}=\{2f^1,f^2,f^3,f^4,f^5,f^6,f^7\}$. The decomposition of 
\[
\alpha_1\coloneqq \d f^5=f^{12}-f^{34},\quad \alpha_2\coloneqq \d f^6=f^{13}+f^{24},\quad\alpha_3\coloneqq \d f^7=f^{14}-f^{23},
\]
into self-dual and anti-self-dual parts with respect to the metric and orientation of $\gr$ determined by the fact that $\{2f^1,f^2,f^3,f^4\}$ is an oriented orthonormal coframe reads 
\begin{eqnarray*}
\alpha_1&=&-\tfrac14(2f^{12}+f^{34})+\tfrac34(2f^{12}-f^{34}),\\
\alpha_2&=&-\tfrac14(2f^{13}-f^{24})+\tfrac34(2f^{13}+f^{24}),\\
\alpha_3&=&-\tfrac14(2f^{14}+f^{23})+\tfrac34(2f^{14}-f^{23}).
\end{eqnarray*}
Since the forms in the brackets are mutually orthogonal and have norm $\sqrt2$, the matrices $S^\mathcal{B}_\pm$ are given by 
$S^\mathcal{B}_+=\tfrac18 \mathrm{I}_3$, $S^\mathcal{B}_-=\tfrac92 \mathrm{I}_3$, 
so the inequalities \eqref{in} hold, showing that the metric making $\mathcal{B}$ orthonormal is not compatible with any purely coclosed G$_2$-structure.

\subsection{Nilsoliton metrics induced by purely coclosed G$_{\mathbf2}$-structures}\label{nils3}
Let $\gn$ be one of the 7-dimensional 2-step nilpotent Lie algebras with $\dim(\gn')=3$, and let $\{f^1,\ldots,f^7\}$ be the corresponding basis of $\gn^*$ given in Appendix \ref{2stepnilclass}.

From \cite{FeC}, we know that, up to automorphism and scaling, a basis of $\mn^*$ which is orthonormal with respect to the nilsoliton metric is given by $\{f^1,\ldots, f^7\}$ for 
the Lie algebras $\mn_{6,3}\oplus\R$, $\mn_{7,3,A}$, $\mn_{7,3,B_1}$,  $\mn_{7,3,C}$ and $\mn_{7,3,D_1}$, by $\left\{f^1,f^2,f^3,f^4,\frac{1}{\sqrt{2}}\,f^5,f^6,f^7\right\}$ for $\mn_{7,3,D}$, 
and by $\left\{f^1,f^2,f^3,f^4,f^5,\frac{1}{\sqrt{2}}\,f^6,f^7\right\}$ for $\mn_{7,3,B}$.

From the discussion in the previous subsections, we can explicitly state which of these nilsoliton metrics are induced by a purely coclosed $\G_2$-structure.

\begin{corollary}\label{ricci3}
On  $\mn_{7,3,C}$, $\mn_{7,3,D}$ and $\mn_{7,3,D_1}$ there exist purely coclosed $\G_2$-structures inducing a nilsoliton metric, while the nilsoliton metrics on 
$\mn_{6,3}\oplus\R$, $\mn_{7,3,A}$, $\mn_{7,3,B}$ and $\mn_{7,3,B_1}$ are not induced by any purely coclosed $\G_2$-structure.  
\end{corollary}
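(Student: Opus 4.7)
The plan is to identify, for each of the seven Lie algebras involved, the nilsoliton orthonormal basis with one of the coframes $\mathcal{B}$ or $\mathcal{C}$ already analyzed in Sect.~\ref{subn3}, and then invoke the conclusions drawn there. Since the nilsoliton metric on a 2-step nilpotent Lie algebra is unique up to automorphism and positive rescaling, and since the property of being induced by a purely coclosed $\G_2$-structure is clearly invariant under both operations, it is enough to match a single representative of the orbit with one of those explicit coframes.

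For the four algebras where the statement is negative, namely $\gn_{6,3}\oplus\R$, $\gn_{7,3,A}$, $\gn_{7,3,B}$ and $\gn_{7,3,B_1}$, I would observe that the nilsoliton orthonormal bases recalled in Sect.~\ref{nils3} coincide verbatim with the coframes $\mathcal{B}$ treated in the corresponding subcases of Sect.~\ref{subn3}. In each of those subcases it was verified by direct computation that the matrices $S^{\mathcal{B}}_+$ and $S^{\mathcal{B}}_-$ both fail the identity $\tr^2(S)=2\tr(S^2)$. Proposition~\ref{p3}, combined with Lemma~\ref{lcal}, will then immediately rule out the existence of any purely coclosed $\G_2$-structure inducing the nilsoliton metric.

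For the three positive algebras $\gn_{7,3,C}$, $\gn_{7,3,D}$ and $\gn_{7,3,D_1}$, I would instead check that the coframe $\mathcal{C}$ already exhibited in the corresponding subcase of Sect.~\ref{subn3} makes the nilsoliton metric orthonormal. For $\gn_{7,3,D_1}$ this is immediate since $\mathcal{C}=\{f^1,\ldots,f^7\}$, and for $\gn_{7,3,C}$ it follows at once because the prescribed $\mathcal{C}$ is a mere permutation of $\{f^1,\ldots,f^7\}$ and hence defines the same inner product. The only mildly delicate case is $\gn_{7,3,D}$, where $\mathcal{C}$ involves the diagonal combinations $\tfrac{1}{\sqrt{2}}(f^7\pm f^6)$; their orthonormality, together with the orthogonality of $\tfrac{1}{\sqrt{2}}f^5$ to the remaining vectors, forces $f^6$ and $f^7$ to be mutually orthogonal and of unit length and $\|f^5\|=\sqrt{2}$, so the induced metric coincides with the nilsoliton metric listed in Sect.~\ref{nils3}. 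Since the $\G_2$-structure induced by $\mathcal{C}$ was already shown to be purely coclosed (via Lemma~\ref{lsa}), this yields the required structure in each of the three cases.

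In summary, no substantial new obstacle has to be overcome: the corollary is essentially a bookkeeping consequence of the explicit coframe constructions of Sect.~\ref{subn3}, the subtlest point being the metric identification for $\gn_{7,3,D}$, which amounts to the short linear-algebra computation sketched above.
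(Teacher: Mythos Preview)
Your proposal is correct and follows precisely the paper's own approach: the corollary is presented there without separate proof, as an immediate consequence of the explicit coframe computations in Sect.~\ref{subn3} together with the identification of the nilsoliton orthonormal bases in Sect.~\ref{nils3}. Your verification that the nilsoliton bases coincide with the relevant coframes $\mathcal{B}$ (for the negative cases) and $\mathcal{C}$ (for the positive cases), including the short check for $\gn_{7,3,D}$, is exactly what the paper leaves implicit.
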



\appendix

\section{The classification of 7-dimensional 2-step nilpotent Lie algebras}\label{2stepnilclass}

The isomorphism classes of 7-dimensional nilpotent Lie algebras were determined in \cite{Gon}. 
Here, we recall the classification of those that are real and 2-step nilpotent. 

The notation we use is consistent with \cite{Gon}: $\gn_{n,t}$ or $\gn_{n,t,\bullet}$ means that the Lie algebra has dimension $n$ and derived algebra of dimension $t$, 
while different capital letters in the third argument are used to distinguish non-isomorphic Lie algebras whose derived algebras have the same dimension.  
We also denote by $\mh_n$ the Heisenberg Lie algebra of dimension $n$ and by $\gh_3^{\bC}$ the real Lie algebra underlying the complex Heisenberg Lie algebra.

For each Lie algebra $\gn$, the structure equations are written with respect to a basis $\{f^1,\ldots,f^7\}$ of the dual Lie algebra $\gn^*$. 

\begin{enumerate}[$\bullet$]
\item 7-dimensional 2-step nilpotent Lie algebras $\mn$ with $\dim (\mn')=1$:
\begin{eqnarray*}
\gh_3\oplus\bR^4 			&=& \left(0,0,0,0,0,0,f^{12}\right),\\
\gh_5\oplus\bR^2 			&=& \left(0,0,0,0,0,0,f^{12}+f^{34}\right),\\
\mh_7					&=& \left(0,0,0,0,0,0,f^{12}+f^{34}+f^{56}\right).
\end{eqnarray*}
The Heisenberg Lie algebra $\mh_7$ is the only indecomposable one in the above list. \smallskip

\item  7-dimensional 2-step nilpotent Lie algebras $\mn$ with $\dim (\mn')=2$:
\begin{eqnarray*}
\gn_{5,2}\oplus\bR^2 		&=& \left(0,0,0,0,f^{12},f^{13},0\right),\\
\gh_3\oplus\gh_3\oplus\bR 	&=& \left(0,0,0,0,f^{12},f^{34},0\right),\\
\gh_3^{\bC}\oplus\bR 		&=& \left(0,0,0,0,f^{13}-f^{24},f^{14}+f^{23},0\right),\\
\gn_{6,2}\oplus\bR			&=& \left(0,0,0,0,f^{12},f^{14}+f^{23},0\right),\\
\gn_{7,2,A}				&=& \left(0,0,0,0,0,f^{12},f^{14}+f^{35}\right),\\
\gn_{7,2,B}				&=& \left(0,0,0,0,0,f^{12}+f^{34},f^{15}+f^{23}\right).
\end{eqnarray*}
The only indecomposable Lie algebras in the above list are $\gn_{7,2,A}$ and $\gn_{7,2,B}$. \smallskip

\item 7-dimensional 2-step nilpotent Lie algebras $\mn$ with $\dim (\mn')=3$:
\begin{eqnarray*}
\gn_{6,3}\oplus\bR	&=&\left(0,0,0,0,f^{12},f^{13},f^{23}\right),\\
\gn_{7,3,A}		&=& \left(0,0,0,0,f^{12},f^{23},f^{24}\right),\\
\gn_{7,3,B}		&=& \left(0,0,0,0,f^{12},f^{23},f^{34}\right),\\
\gn_{7,3,B_1}		&=& \left(0,0,0,0,f^{12}-f^{34},f^{13}+f^{24},f^{14}\right)\\
\gn_{7,3,C}		&=& \left(0,0,0,0,f^{12}+f^{34},f^{23},f^{24}\right),\\
\gn_{7,3,D}		&=& \left(0,0,0,0,f^{12}+f^{34},f^{13},f^{24}\right),\\
\gn_{7,3,D_1}		&=& \left(0,0,0,0,f^{12}-f^{34},f^{13}+f^{24},f^{14}-f^{23}\right). 
\end{eqnarray*}
The only decomposable Lie algebra in the above list is $\gn_{6,3}\oplus\bR$.
\end{enumerate}

\bigskip\noindent
{\bf Acknowledgements.}
A.R.~was supported by GNSAGA of INdAM and by the project PRIN 2017  ``Real and Complex Manifolds: Topology, Geometry and Holomorphic Dynamics''.  
Part of this work was done during a visit of A.R.~to the Laboratoire de Math\'ematiques d'Orsay of the Universit\'e Paris-Saclay. He is grateful to the LMO for the hospitality.


\end{document}